\title{Non extendability from any side of the domain of definition as a 
generic property of smooth or simply continuous functions on an analytic curve}
\author{E. Bolkas, V. Nestoridis and C. Panagiotis}
\date{}
\titleformat*{\section}{\normalsize\bfseries}
\titleformat*{\subsection}{\Large\bfseries}
\titleformat*{\section}{\normalsize\bfseries}
\titleformat*{\subsection}{\Large\bfseries}
\newtheorem{theorem}{Theorem}[chapter]
\numberwithin{theorem}{section}
\newtheorem{defi}[theorem]{Definition}
\newtheorem{lemma}[theorem]{Lemma}
\newtheorem{prop}[theorem]{Proposition}
\theoremstyle{definition}
\newtheorem{remark}[theorem]{Remark}
\newcommand{\bbR}{\mathbb{R}}
\newcommand{\bbZ}{\mathbb{Z}}
\begin{document}
\maketitle
\begin{abstract}
\noindent 
In this article we show that extendability from one side of a simple analytic curve
is a rare phenomenon in the topological sense in various spaces of functions. Our
result can be proven using Fourier methods combined with other facts or by complex
analytic methods and a comparison of the two methods is possible.
\end{abstract}

\noindent AMS Classification $n^o$: 42A16, 30H10\\

\noindent Key words and phrases: extendability, real analytic functions, Fourier 
series, Cauchy transform, Michael's selection theorem, Borel's theorem, Baire's 
theorem, generic property.

\section{Introduction}
In [2] it is considered the space $C^\infty([0,1])$ endowed with its natural Frechet
topology. It is proven that the set $A$ of functions $f\in C^\infty([0,1])$
nowhere analytic contains a dense and $G_\delta$ subset of $C^\infty([0,1])$.
In the present paper  we strengthen the above result by showing that the set $A$ is 
it self $G_\delta$. Furthermore, we prove a similar result in the space of
$C^\infty$ periodic functions, that is in $C^\infty(T)$, where $T$ is the unit 
circle.
In order to do this we combine the fact that non extendability is a generic 
phenomenon for the space $A^\infty(D)$, where $D$ is the unit disk ([8]), with some 
elements of Fourier Analysis. More precisely, using the Cauchy transform, 
$C^\infty(T)$ is a direct sum of $A^\infty(D)$ and another space $Y$. This space
$Y$ contains functions holomorphic outside the open unit disk, which extend smoothly
on $T$. A careful observation of the above facts enables us to prove that generically
all functions in $C^\infty(T)$ are nowhere extendable from neither side of the 
unit circle $T$. We notice that extendability from one only side of the domain of
definition has already been considered in [4] and the references therein. There one
investigates sufficient conditions so that, if all derivatives of a function at a 
point of the boundary vanish one can conclude that the one side extension is 
identically equal to zero. It is, as far as we know, the first time where the 
phenomenon of one-side extendability is noticed to be a rare phenomenon.

Next, we tried to extend the previous results from the whole circle $T$ to a subarc 
of it (or equivalently to a segment). To do this we combined the previous result
with a theorem of Borel ([7]) saying that any complex sequence appears as the 
sequence
of derivatives at $0$ of a $C^\infty$ function and with a simple version of 
Michael's selection theorem [11]. The latter allows us to assign in a continuous way
a $C^\infty$ function $f$ to every complex sequence $a_n$, $n=0,1,2,...$ so that
$f^{(n)}(0)=a_n$ for all n. It was also necessary to localize a necessary and 
sufficient condition for non extendability of holomorphic functions in the open 
disk $D$. More precisely, it is known [8] that a holomorphic function $f\in H(D)$ is
nowhere extendable if and only if 
$R_\zeta=dist(\zeta,\partial D=T)$ for all $\zeta\in D$, where
$R_\zeta$ demotes the radius of convergence of the Taylor expansion of f with center 
$\zeta$.
The localized version of it is the following: A holomorphic function $f\in H(D)$
is not extendable at any open set containing $1\in T=\partial D$ if and only if 
$R_\zeta\leq |\zeta-1|$ for all $\zeta\in D$ (or equivalently for a denumerable 
subset of $\zeta$'s in D clustering to 1). 
In this way, the Fourier method, combined with other facts, yields the results for
any circular arc, segment, the circle T, the real line $\mathbb{R}$ for the space
$C^\infty$ and $C^k. k=0,1,2,...$, where $C^0$ denotes the space of continuous
functions and on some variations of these spaces. Furthermore, composing with
a conformal map, we can transfer the above result at any injective analytic curve
(replacing T, $\mathbb{R}$ or [0,1]). We also prove that, if a domain $\Omega$ in
$\mathbb{C}$ is bounded by a finite number of disjoint analytic Jordan curves, then
generically in $A^\infty(\Omega)$ every function f is nowhere real analytic on the
boundary of $\Omega$. When we say real analytic we mean with respect to a conformal
parameter of the boundary analytic curves $\gamma_i$, $i=1,2,...,n$.
But then naturally the following question arises. What happens if we parametrize 
$\gamma_i$ in a different way, for instance, by the arc length parameter? In order to
answer this it has been proven in [9] that the arc length parameter is a global 
conformal parameter for any analytic curve, thus, nothing changes if we use the arc 
length parameter.

At the last section of the present paper we present a complex analytic approach, 
based
on the Hardy space $H^\infty(D)$ and Montel's theorem, which yields all previous 
results in a much simpler and natural way. Thus, although in general the complex 
method works in less cases than the real-harmonic analysis method, but with simpler
and less combinatorial arguments, in the present case the complex method initially
was going much further than the Fourier method and with a much more natural and easy
way. In order to push the Fourier method to give almost as many results as the 
complex
method, we are led to combine it with Michael's selection theorem, with Borel's 
theorem and to localize non extendability results from [8]. The procedure is not 
simple but clearly interesting and beautiful.

\section{Preliminaries}
\begin{prop}
Let $L\subset \mathbb{C}$ be a closed set and $\Omega\subset \mathbb{C}$ be an open
set. Let $f: \Omega\rightarrow \mathbb{C}$ be a continuous function on $\Omega$, 
which is holomorphic at $\Omega \setminus L$. Then, if $L$ satisfies one of the 
following conditions i), ii), iii), iv) and v), automatically it follows that $f$ is
holomorphic at whole $\Omega$.
\begin{enumerate}[(i)]
\item $L=\left\{x+ig(x): x\in [a,b]\right\}$, where $g:[a,b]\rightarrow \mathbb{R}$
is continuous and of bounded variation (that is $L$ has finite length) and $a<b$.
\item $L$ is the boundary of a convex body in $\mathbb{C}$
\item $L$ is a straight line or circle or segment or circular arc.
\item $L$ is an analytic Jordan curve or a compact simple analytic arc.
\item $L$ is a conformal image of a set of the previous types.
\end{enumerate}
\end{prop}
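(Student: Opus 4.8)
The plan is to reduce cases (ii)--(v) to case (i) and then to prove (i) by a Cauchy-integral argument \emph{across} the curve. Since holomorphy is a local property and $f$ is already holomorphic on $\Omega\setminus L$, it suffices to show $f$ is holomorphic in a neighbourhood of each $p\in L$. For (ii): after a suitable rotation of coordinates, $L$ is, near $p$, the graph of a concave function, which is locally Lipschitz and in particular of bounded variation, so we are in case (i). Case (iii) is the same, with an analytic local graph. For (iv) and (v): by the definition of an analytic curve, resp.\ since a map that is holomorphic and injective on an open set is biholomorphic onto its image, there is a conformal map $\Phi$ defined near $p$ with $\Phi^{-1}(L)$ locally a segment (case (iv)) or locally a set of one of the types (i)--(iv) (case (v)); then $f\circ\Phi$ is continuous and holomorphic off $\Phi^{-1}(L)$, so the already treated cases make $f\circ\Phi$ holomorphic, hence so is $f$. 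Everything thus comes down to the following.

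\smallskip
\noindent\textit{Claim.} If $L=\{x+ig(x):x\in[a,b]\}$ with $g$ continuous and of bounded variation and $f\in C(\Omega)$ is holomorphic on $\Omega\setminus L$, then $f$ is holomorphic on $\Omega$.

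\smallskip
For an interior point $p=x_{0}+ig(x_{0})$ of the arc, use continuity of $g$ at $x_{0}$ to choose a rectangle $R=(x_{0}-\delta,x_{0}+\delta)\times(g(x_{0})-h,g(x_{0})+h)$ with $\overline R\subset\Omega$ so small that $g([x_{0}-\delta,x_{0}+\delta])\subset(g(x_{0})-h/2,g(x_{0})+h/2)$. Then $L\cap R$ is exactly the graph of $g$ over $[x_{0}-\delta,x_{0}+\delta]$, a crosscut of $R$ joining its left and right edges, which splits $R$ into the two Jordan domains $R^{+}=\{x+iy\in R:y>g(x)\}$ and $R^{-}=\{x+iy\in R:y<g(x)\}$. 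Crucially, each of $\partial R^{+}$ and $\partial R^{-}$ consists of the graph of $g$ --- which has finite length, at most $2\delta+\operatorname{Var}(g)$, precisely because $g\in BV$ --- together with finitely many line segments, so it is a \emph{rectifiable} Jordan curve; and $f$ is continuous on $\overline{R^{\pm}}$ and holomorphic on $R^{\pm}\subset\Omega\setminus L$. Hence, by Cauchy's theorem and integral formula for Jordan domains with rectifiable boundary (via Carath\'{e}odory's theorem together with the fact that the derivative of the Riemann map of such a domain lies in $H^{1}$), using the Cauchy formula on the side containing $z$ and Cauchy's theorem on the other side, one gets for every $z\in R\setminus L$
\[
f(z)=\frac{1}{2\pi i}\int_{\partial R^{+}}\frac{f(\zeta)}{\zeta-z}\,d\zeta+\frac{1}{2\pi i}\int_{\partial R^{-}}\frac{f(\zeta)}{\zeta-z}\,d\zeta=\frac{1}{2\pi i}\int_{\partial R}\frac{f(\zeta)}{\zeta-z}\,d\zeta=:F(z),
\]
the two integrals over the common graph cancelling because $f$ is continuous (single-valued) across $L$, and the surviving contributions adding up to the integral over $\partial R$. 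Now $F$ is holomorphic on all of $R$, it agrees with $f$ on the dense set $R\setminus L$, and both are continuous on $R$; therefore $f=F$ is holomorphic on $R$. When $p$ is an endpoint of the arc the same works with $R^{+}\cup R^{-}$ replaced by the single slit domain $R\setminus L$ (choose $R$ small enough that $L\cap R$ is just the sub-arc emanating from $p$): in $\partial(R\setminus L)$ the slit is described twice with opposite orientations and, $f$ being continuous across it, its contributions again cancel.

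The heart of the matter --- and the step I expect to need the most care --- is the Cauchy integral representation for a function merely continuous up to the boundary of $R^{\pm}$: this is exactly where the bounded-variation hypothesis in (i) is indispensable, since it is what makes $\partial R^{\pm}$ rectifiable, so that the boundary integral is even defined and the F.\ and M.\ Riesz machinery applies. A secondary, purely topological, point is to check that the two sides of $L$ in a small box genuinely are Jordan domains bounded by rectifiable Jordan curves, which is why it is convenient to reduce every case first to the graph situation, where the crosscut is visible by inspection. (One could instead argue more abstractly --- noting that $\bar\partial f$ is a distribution of order $\le 1$ carried by a set of finite $1$-dimensional measure and showing it must vanish, or invoking the vanishing of the continuous analytic capacity of $L$ --- but the elementary route above is shorter and entirely classical.)
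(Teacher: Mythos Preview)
Your argument is correct, and the overall architecture---define the Cauchy transform $F$ over the boundary of a small box, split the box along $L$ into two pieces, and show the contributions over $L$ cancel---is exactly the paper's. The genuine difference is in how you justify the Cauchy formula on each half-region $R^{\pm}$. You appeal directly to Cauchy's theorem for a function continuous on the closure of a Jordan domain with rectifiable boundary, invoking Carath\'eodory plus the $H^{1}$ / F.\ and M.\ Riesz circle of ideas. The paper avoids this entirely by an elementary \emph{translation} trick: it shifts the curve $\gamma_{2}$ (say) vertically by a small amount $\lambda t_{0}$ so that $f$ is honestly holomorphic inside the shifted contour, applies the classical Cauchy formula there, and then lets $\lambda\to 0$ using only the uniform continuity of $f(w)/(w-z)$ on a compact set away from $z$; the finite length of $L$ is what makes the resulting integral estimate go through. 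So the paper's proof is self-contained and needs nothing beyond the standard Cauchy formula, whereas yours is shorter but leans on a substantially deeper citation.

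For the reductions, you localize everything immediately to case~(i), which is clean. The paper treats (ii) a bit more globally---projecting the convex boundary to the $x$-axis, writing it as the union of two graphs (one convex, one concave), handling possible vertical segments separately, and finally disposing of at most four corner points via Riemann's removable-singularity theorem. Your local rotation-to-a-concave-graph works just as well since holomorphy is local. Cases (iii)--(v) are handled the same way in both proofs. Your explicit treatment of the arc endpoints via a slit domain is fine, though you could equally well note that once the interior points are done the two endpoints are isolated and hence removable.
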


\begin{proof}
i) Let $z_0 \in L$. We denote the open square of center $z_0$ and side $r>0$ as  
$E(z_0,r)$. We can find $r_0 > 0$ such that 
$\partial E(z_0,r_0))\setminus L$ has two connected components $E_1$ and
$E_2$. We denote by $\gamma$ the boundary $\partial E(z_0,r_0)$ of $E(z_0,r_0)$,
and define the function
$$F(z)=\dfrac{1}{2\pi i} \int_{\gamma}{\dfrac{f(w)}{w-z}dw},$$ which is holomorphic
at $\mathbb{C}\setminus \gamma$. We will prove that $F(z)=f(z)$ for $z\in E(z_0,r)$
and thus $f$ will be holomorphic at $E(z_0,r)$. Since $z_0$ was arbitrary at $L$ we 
will have proved that $f$ is holomorphic at $\Omega$. 

In order to prove that $F(z)=f(z)$ for $z\in E(z_0,r)$ we first notice that
$$F(z)=\dfrac{1}{2\pi i} \int_{\gamma_1}{\dfrac{f(w)}{w-z}dw}+\dfrac{1}{2\pi i} 
\int_{\gamma_2}{\dfrac{f(w)}{w-z}dw},$$ where $\gamma_1=(\overline{E(z_0,r)}\cap L) + 
E_1$, $\gamma_2=(\overline{E(z_0,r)}\cap L) + E_2$ and $\gamma_1, \gamma_2$
are counterclockwise oriented. 

Let $z\in E(z_0,r)\setminus L$ and $\varepsilon>0$. 
If z belongs to the interior of 
$\gamma_1$, then we will prove that
$$\dfrac{1}{2\pi i}\int_{\gamma_2}{\dfrac{f(w)}{w-z}dw}=0.$$
We can find a $t_0 \in \mathbb{R}$ such that
$f$ is holomorphic at the interior of
$\gamma_{2,t_0}=\gamma_2+it_0$, where $"+"$ means translation in $\mathbb{C}$, and 
the curves $\gamma_{2,\lambda t_0}=\gamma_2+i \lambda t_0$ do not intersect
$\gamma_2$ for $0<\lambda \leq 1$. For this it is sufficient to choose $t_0 >0$ or
$t_0<0$.
From the uniform continuity of $\dfrac{f(w)}{w-z}$ at any compact set uniformly
away from z we can find a $0<\lambda_0 \leq 1$ such that
$$\left|\dfrac{f(w)}{w-z}-\dfrac{f(w+ \lambda t_0)}{w+ \lambda t_0-z}\right|\leq
\dfrac{\pi \varepsilon}{M}$$ for $0 \leq \lambda \leq \lambda_0$ and $w\in \gamma_2$, 
where $M<\infty$ is the length of $\gamma_2$. Then
\begin{align*}
\left|\dfrac{1}{2\pi i} \int_{\gamma_2}{\dfrac{f(w)}{w-z}dw}-\dfrac{1}{2\pi i} 
\int_{\gamma_{2,\lambda t_0}}{\dfrac{f(w)}{w-z}dw}\right|=\dfrac{1}{2\pi}\left|
\int_{\gamma_2}{\Big(\dfrac{f(w)}{w-z}-{\dfrac{f(w+\lambda t_0)}
{w+\lambda t_0-z}\Big)dw}}\right|\leq \\  \leq \dfrac{1}{2\pi}\int_{\gamma_2}
{\left|\dfrac{f(w)}{w-z}-\dfrac{f(w+\lambda t_0)}{w+\lambda t_0-z}\right|dw}
 \leq \dfrac{1}{2\pi} \int_{\gamma_2}\dfrac{\pi\varepsilon}{M}dw
 =\dfrac{1}{2\pi}\dfrac{\pi\varepsilon}{M}M=\dfrac{\varepsilon}{2}<\varepsilon
\end{align*}
for every $0<\lambda \leq \lambda_0$.
Thus $$\lim_{\substack{\lambda\to 0  \\ \lambda>0}}\dfrac{1}{2\pi i}
\int_{\gamma_2,\lambda t_0}{\dfrac{f(w)}{w-z}dw}=
\dfrac{1}{2\pi i}\int_{\gamma_2}{\dfrac{f(w)}{w-z}dw}.$$
But by Cauchy's Integral Formula 
$$\dfrac{1}{2\pi i}\int_{\gamma_{2,\lambda t_0}}{\dfrac{f(w)}{w-z}dw}=0$$ for every 
$0 < \lambda \leq 1$. Similarly, 
$$\lim_{\substack{\lambda \to 0  \\ \lambda>0}}\dfrac{1}{2\pi i}
\int_{\gamma_{1,\lambda s_0}}{\dfrac{f(w)}{w-z}dw}=
\dfrac{1}{2\pi i}\int_{\gamma_1}{\dfrac{f(w)}{w-z}dw}$$. But by Cauchy's Integral 
Formula
$$\dfrac{1}{2\pi i}\int_{\gamma_1,\lambda s_0}{\dfrac{f(w)}{w-z}dw}=f(z)$$
for suitable $s_0$ and every $0<\lambda \leq 1$. Consequently
$$F(z)=f(z).$$ 
A similar argument shows that $F(z)=f(z)$ for every z in the interior of the curve
$\gamma_2$. Finally, it follows from the continuity of $F,f$ that,
if $z\in E(z_0,r)\cap L$, then $F(z)=f(z)$. 

ii) If L is the boundary of a convex body, then its projection to the x-axis is 
a compact interval $[a,b], a<b$. A vertical line $x=x_0$, $a<x_0<b$ intersects L at
two points and in this way two continuous functions $g,h$ on $(a,b)$ are defined,
where $g(x)\leq h(x)$ for $x\in (a,b)$. Both $g,h$ can be extended continuously 
at $a,b$ and their graphs have finite length, since $g$ is convex and $h$ concave. 
The vertical lines $x=a$ and $x=b$ intersect L at a point or at a vertical segment. 
For the points $(x_0,g(x_0))$ and $(x_0,h(x_0))$, where $a<x_0<b$, case i) holds
and thus $f$ will be holomorphic at those points.
If L contains a vertical segments, then the proof of case i) can be used to prove 
the desired for the interior points of the segment. It remains a finite number of
points, at least 2 and up to 4. By Riemann's theorem on removable isolated 
singularities $f$ will also be continuous at those points.
Therefore, $f$ will be holomorphic at $\Omega$.

iii) It follows from cases i) and ii).

iv) For an analytic Jordan curve there exists
a conformal function which maps the curve to a circle and an open neighbourhood of
the curve to an open neighbourhood of the circle. The same holds true for a compact 
simple analytic arc and an arc. The result follows now from case iii).

v) Obvious.
\end{proof}

The previous conditions are sufficient conditions. M. Papadimitrakis informed us
that a sufficient and necessary condition is that the continuous analytic capacity
$a(L)$ [5] is equal to $0$.
 
\section{The Fourier method on the circle and non extendability}

The basic result of this section is that generically at  
$C^k(T)$ every function is not extended holomorphically both
at the interior and the exterior of the unit circle $T$.   
Another result is that generically in $C^k(T)$ 
every function is not holomorphically extendable at any open disk
intersecting the unit circle $T$. 
We note that this is the first time it is proved that the phenomenon
of holomorphic extension from one side is topologically rare.
For one-sided holomorphic extensions we refer to
[4] and at the articles contained at its bibliography.\\

Let $\Omega\subset\mathbb{C},\Omega\neq \mathbb{C}$ be a domain and 
$z_0\in \partial\Omega$ for which there is a $\delta>0$ such that every open set 
$D(z_0,r)\cap \Omega,r\leq \delta$ is connected (*).
The space of holomorphic functions at $\Omega$ is 
denoted by $ H(\Omega)$. The topology of $H(\Omega)$
is defined by the uniform convergence on compact subsets. The space 
$H(\Omega)$ is now a Frechet space. From now on, the symbols $\Omega,z_0$ are as 
above, unless something else is stated.

\begin{defi}
Let $\Omega \subset \mathbb{C},\Omega\neq \mathbb{C}$ be a domain, 
$f\in H(\Omega)$ and $z_0\in\partial\Omega$.
The function $f$ belongs to the class $U_1(\Omega,z_0)$ if there is no open 
disk $D(z_0,r),r>0$ and there is no holomorphic function $F:D(z_0,r) 
\rightarrow \mathbb{C}$ such that $F|_{\Omega\cap D(z_0,r)}=
f|_{\Omega\cap D(z_0,r)}$.
\end{defi}

If $\Omega \subset \mathbb{C},\Omega\neq \mathbb{C}$ is a domain, 
$f\in H(\Omega)$, with $R_\zeta\in(0,+\infty)$ we will denote 
the radius of convergence of the Taylor development of $f$ of center 
$\zeta$, which always satisfies $R_\zeta\geq dist(\zeta,\partial\Omega)$. 

Our purpose is to characterize the class $U_1(\Omega,z_0)$.  

\begin{prop}
Let $\Omega\subset\mathbb{C},\Omega\neq \mathbb{C}$ be a domain, 
$z_0\in \partial\Omega$ such that (*) holds and $f \in H(\Omega)$. 
Then the following are equivalent:
\begin{enumerate}[(i)]
\item $f\in U_1(\Omega,z_0)$
\item for every $w$ in $\Omega\cap D(z_0,\delta)$ the radius of 
convergence $R_w\leq |w-z_0|$.
\item for every $z$ in a dense subset $A$ of $\Omega\cap D(z_0, \delta)$  
the radius of convergence $R_z\leq |z-z_0|$. 
\item for some $z_n\in \Omega\cap D(z_0,\delta),n=1,2,3,...$ converging to 
$z_0$ the radius of convergence $R_{z_n}\leq |z_n-z_0|$.
\end{enumerate}
\end{prop}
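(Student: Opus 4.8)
The plan is to prove the cycle of implications (i) $\Rightarrow$ (ii) $\Rightarrow$ (iii) $\Rightarrow$ (iv) $\Rightarrow$ (i), of which the middle two are trivial: (ii) $\Rightarrow$ (iii) holds with $A=\Omega\cap D(z_0,\delta)$ itself, and (iii) $\Rightarrow$ (iv) follows by picking points of the dense set $A$ that converge to $z_0$ (possible since $z_0\in\partial\Omega$ and $A$ is dense in a neighborhood of $z_0$ within $\Omega$). So the real content is (i) $\Rightarrow$ (ii) and (iv) $\Rightarrow$ (i).

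For (i) $\Rightarrow$ (ii), I argue by contraposition. Suppose there is $w\in\Omega\cap D(z_0,\delta)$ with $R_w>|w-z_0|$. Then the Taylor series of $f$ centered at $w$ converges on $D(w,R_w)$, which is an open disk containing $z_0$, and defines a holomorphic function $g$ there. Set $r$ small enough that $D(z_0,r)\subset D(w,R_w)$. I must check that $g$ agrees with $f$ on $\Omega\cap D(z_0,r)$; this is where condition (*) enters. On the connected open set $\Omega\cap D(z_0,\delta)$ (shrinking $\delta$ if needed so it is connected) the function $g$ and $f$ are both holomorphic and agree on a neighborhood of $w$ (namely on $D(w,\mathrm{dist}(w,\partial\Omega))$, where the Taylor series is the function itself), hence by the identity theorem they agree on all of $\Omega\cap D(z_0,\delta)$, in particular on $\Omega\cap D(z_0,r)$. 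Thus $f\notin U_1(\Omega,z_0)$.

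For (iv) $\Rightarrow$ (i), again contrapositive: suppose $f\notin U_1(\Omega,z_0)$, so there is $r>0$ and $F$ holomorphic on $D(z_0,r)$ with $F=f$ on $\Omega\cap D(z_0,r)$. Take any sequence $z_n\in\Omega\cap D(z_0,\delta)$ with $z_n\to z_0$. For $n$ large, $|z_n-z_0|<r/2$, so $D(z_n,\,r-|z_n-z_0|)\subset D(z_0,r)$, and on a neighborhood of $z_n$ inside $\Omega$ the functions $f$ and $F$ coincide; hence the Taylor series of $f$ at $z_n$ equals that of $F$ at $z_n$, which converges on a disk of radius at least $r-|z_n-z_0|>|z_n-z_0|$ (the last inequality holding once $|z_n-z_0|<r/2$). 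Therefore $R_{z_n}\ge r-|z_n-z_0|>|z_n-z_0|$ for all large $n$, contradicting (iv). This closes the cycle.

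The main obstacle is the careful bookkeeping in (i) $\Rightarrow$ (ii): one must invoke condition (*) to guarantee that the relevant piece of $\Omega$ near $z_0$ is connected, so that the identity theorem propagates the coincidence of $f$ with the locally-defined Taylor sum from a small disk around $w$ to all of $\Omega\cap D(z_0,\delta)$; without connectedness the extension furnished by the large Taylor disk might fail to match $f$ on the part of $\Omega\cap D(z_0,r)$ lying in another component. Everything else is a routine application of the identity theorem and the definition of radius of convergence, together with the elementary fact that the Taylor series of $f$ at a point $\zeta$ reproduces $f$ on $D(\zeta,\mathrm{dist}(\zeta,\partial\Omega))$ and a fortiori wherever it converges, provided we stay inside a connected neighborhood where $f$ is single-valued.
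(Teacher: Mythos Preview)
Your proposal follows essentially the same route as the paper: contraposition for both nontrivial implications, with the Taylor sum at $w$ supplying the extension for (i)$\Rightarrow$(ii) and the estimate $R_t\ge r-|t-z_0|>|t-z_0|$ (for $|t-z_0|<r/2$) handling the converse direction; the paper organizes the equivalences as a hub around (i)--(ii) (proving (ii)$\Rightarrow$(i), (iii)$\Rightarrow$(i), (iv)$\Rightarrow$(i) by the same calculation) rather than a single cycle, but the arguments coincide. One minor slip worth tightening in your (i)$\Rightarrow$(ii): you assert that $g$ is holomorphic on $\Omega\cap D(z_0,\delta)$, but $g$ is only defined on $D(w,R_w)$, which need not contain that whole set---the identity theorem should be applied on (the component through $w$ of) $\Omega\cap D(w,R_w)$, and one then uses that $\Omega\cap D(z_0,s)$ is connected and sits inside $D(w,R_w)$; the paper is comparably brief at this very step.
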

\begin{proof}

$(i)\Rightarrow (ii)$. We assume that $R_w>|w-z_0|$ for some
$w\in \Omega\cap D(z_0,\delta)$ and we notice that $z_0\in D(w,R_w)$.
Since $\Omega\cap D(z_0,\delta)$ is open there exists a $s>0$ such 
that $D(z_0,s)\subset  D(w,R_w) $.
Let $G$ be the holomorphic function in $D(w,R_w)$ defined by 
the Taylor series of $f$ at $w$. Then $G$ coincides
with $f$ at the open disk $D(w,dist(w,\partial\Omega))$ and by analytic continuation 
coincides with $f$ in $D(z_0,s)\cap \Omega$, because $D(z_0,s)\cap \Omega$ is also 
connected. Besides, $G$ is 
holomorphic in $D(z_0,s)$ and therefore $f$ 
does not belong to the class $U_1(D,z_0)$, which is absurd.

$(ii)\Rightarrow(i)$.If $(i)$ does not hold, 
then there is an open disk
$D(z_0,s),0<s\leq \delta$ and a holomorphic function
$F:D(z_0,s) \rightarrow \mathbb{C}$
such that $F_{/\Omega\cap D(z_0,s)}=f_{/\Omega\cap 
D(z_0,s)}$. Let $t$ be a point in $\Omega$ such that
$t\neq z_0,|t-z_0|<\dfrac{s}{2}$,
then, since $F^{(l)}(t)=f^{(l)}(t)$ for every $l=0,1,2,...$ the 
Taylor Series of center $t$ of $F$ coincides with 
the Taylor Series of center $t$ of $f$.
Hence $R_t\geq dist(t,\partial D(z_0,s))
\geq s-\dfrac{s}{2}=\dfrac{s}{2}$, but on the other hand
 $R_t=dist(t,\partial \Omega)<\dfrac{s}{2}$, which is absurd.

$(ii)\Rightarrow(iii)$.Obvious.

$(iii)\Rightarrow(i)$. Same as the direction $(ii)\Rightarrow(i)$, because 
for every $s>0$ there is a $z\in A$ such that $|z-z_0|<\dfrac{s}{2}$.

$(ii)\Rightarrow(iv)$.Obvious.

$(iv)\Rightarrow(i)$. Same as the direction
$(ii)\Rightarrow(i)$, because, if $z_n\in \Omega\cap D(z_0,\delta)$ 
converges to $z_0$, then for every $s>0$ there is an $m \in 
\left\{1,2,3,...\right\}$ such that $|z_m-z_0|<\dfrac{s}{2}$.
\end{proof}

\begin{defi}
Let $\Omega\subset\mathbb{C},\Omega\neq \mathbb{C}$ be a domain, 
$z_0\in \Omega$ and $z\in \Omega$. A function
$f \in H(\Omega)$ belongs to the class $F(\Omega,z,z_0)$ if 
the radius 
of convergence of the Taylor
expansion of f of center $z$ is less than or equal to $|z-z_0|$.
\end{defi}

We will now try to find a nice description for the class 
$F(\Omega,z,z_0)$, but previously we remind from [8] that if 
$\Omega\subset\mathbb{C},\Omega
\neq \mathbb{C}$ is a domain and
$z\in \Omega$ then the set of functions which are 
holomorphic exactly in $\Omega$ 
is a dense and $G_\delta$ subset of $H(\Omega)$.

\begin{prop}
Let $\Omega\subset\mathbb{C},\Omega\neq \mathbb{C}$ be a domain, 
$z_0\in \partial\Omega$ such that (*) holds and $z\in \Omega\cup D(z_0,\delta)$. 
Then the class $F(\Omega,z,z_0)$
is a dense and $G_\delta$ in $H(\Omega)$.
\end{prop}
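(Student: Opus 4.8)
The plan is to exhibit $F(\Omega,z,z_0)$ as a countable intersection of open dense sets. First I would unwind the condition defining the class. Writing the Taylor coefficients of $f$ at the center $z$ as $a_n(f) = f^{(n)}(z)/n!$, the radius of convergence $R_z$ satisfies $R_z \le |z-z_0|$ precisely when $\limsup_n |a_n(f)|^{1/n} \ge 1/|z-z_0|$, i.e. when for every $\varepsilon>0$ and every $N$ there exists $n\ge N$ with $|a_n(f)|^{1/n} > \frac{1}{|z-z_0|+\varepsilon}$. Hence, taking $\varepsilon$ over a sequence $\varepsilon_k \downarrow 0$, we get
\[
F(\Omega,z,z_0) \;=\; \bigcap_{k=1}^{\infty}\bigcap_{N=1}^{\infty}\ \bigcup_{n\ge N}\ \Big\{\, f\in H(\Omega):\ |a_n(f)| > \big(\tfrac{1}{|z-z_0|+\varepsilon_k}\big)^{\!n}\,\Big\}.
\]
Each set in the innermost union is open in $H(\Omega)$, because $f\mapsto a_n(f) = f^{(n)}(z)/n!$ is continuous for the topology of local uniform convergence (Cauchy estimates give continuity of $f\mapsto f^{(n)}(z)$), so $\bigcup_{n\ge N}\{\dots\}$ is open and $F(\Omega,z,z_0)$ is $G_\delta$.

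The main work is density. I would show each of the open sets $V_{k,N} := \bigcup_{n\ge N}\{|a_n(f)| > (1/(|z-z_0|+\varepsilon_k))^n\}$ is dense, and then invoke Baire. Fix $g\in H(\Omega)$ and a basic neighborhood of $g$; I want to perturb $g$ slightly, inside $H(\Omega)$, to land in $V_{k,N}$. Here is where I would bring in the quoted fact from [8]: the set of functions holomorphic \emph{exactly} on $\Omega$ (i.e. nowhere extendable, members of the relevant $U$-class) is dense $G_\delta$ in $H(\Omega)$. For such a function $h$, the radius of convergence at any $w\in\Omega$ equals $\mathrm{dist}(w,\partial\Omega)$; but applied at the center $z\in\Omega\cap D(z_0,\delta)$ (using hypothesis (*) so that the local geometry is controlled) this gives $R_z = \mathrm{dist}(z,\partial\Omega) \le |z-z_0|$ since $z_0\in\partial\Omega$. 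Thus every nowhere-extendable function already lies in $F(\Omega,z,z_0)$, so $F(\Omega,z,z_0)$ contains a dense set and is itself dense.

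Wait — I should be careful about the case $z\in D(z_0,\delta)$ but $z\notin\Omega$, which the statement allows ($z\in\Omega\cup D(z_0,\delta)$); then "$R_z$" must be interpreted via the Taylor series of an extension, or the condition is read through a limiting/continuity argument. I expect the cleanest route is: density of $F(\Omega,z,z_0)$ follows immediately once we know it \emph{contains} the dense set of nowhere-extendable functions, and $G_\delta$-ness follows from the explicit formula above; so the whole proof reduces to (a) the continuity of the coefficient functionals, (b) the [8] density statement, and (c) the elementary observation $\mathrm{dist}(z,\partial\Omega)\le|z-z_0|$. The one genuine obstacle is handling $z$ near but not in $\Omega$ and making sure the Taylor-coefficient functionals are still well-defined and continuous there; I would deal with this by shrinking $\delta$ if necessary so that $D(z,\rho)\subset\Omega$ for the relevant $\rho$, or by noting that for $z\in D(z_0,\delta)\setminus\Omega$ the condition $R_z\le|z-z_0|$ is automatic (the extension cannot reach past $z_0$), making $F(\Omega,z,z_0)=H(\Omega)$ in that sub-case, which is trivially dense $G_\delta$.
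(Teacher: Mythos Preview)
Your approach is essentially identical to the paper's: density is obtained by observing that the functions holomorphic exactly on $\Omega$ (dense by [8]) satisfy $R_z=\mathrm{dist}(z,\partial\Omega)\le|z-z_0|$ and hence lie in $F(\Omega,z,z_0)$, while the $G_\delta$ property comes from rewriting the $\limsup$ condition on the Taylor coefficients as a countable intersection of open sets (the paper uses the threshold $\tfrac{1}{d_z}-\tfrac{1}{s}$ in place of your $\tfrac{1}{d_z+\varepsilon_k}$, which is an equivalent parametrization). Your worry about $z\in D(z_0,\delta)\setminus\Omega$ is a red herring caused by a typo in the statement: the hypothesis is meant to be $z\in\Omega\cap D(z_0,\delta)$, consistent with Definition~3.3 and with how $R_z$ is used, so that case does not arise.
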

\begin{proof}
Since $z_0\in\partial\Omega$,
if a function $f\in H(\Omega)$ is holomophic  exactly in 
$\Omega$ then the radius 
of convergence of the Taylor
expansion of f with center $z$ is exactly
equal to $dist(z,\partial \Omega)$ and thus
it is also less than or equal to 
$|z-z_0|$. Therefore, the above set is subset of $F(\Omega,z,z_0)$
and thus $F(\Omega,z,z_0)$ is dense in $H(\Omega)$.

Let $d_z =|z-z_0|>0$. Since the radius of convergence
$$R_z(f) =\dfrac{1}{\limsup\limits_n \sqrt[n]{\dfrac{|f^{(n)}(z)|}
{n!}}},$$ we can deduce that
$$F(\Omega,z,z_0)=\bigcap\limits_{s=1}^\infty \bigcap\limits_
{n=1}^\infty \bigcup\limits_{k=n}^\infty
\left\{f\in H(\Omega): \dfrac{1}{d_z}-\dfrac{1}{s}<
\sqrt[k]{\dfrac{|f^{(k)}(z)|}
{k!}}\right\}.$$
From the continuity of the map $H(\Omega)\ni f\rightarrow
f^{(k)}(z)\in \mathbb{C},k=1,2,...$ it follows that 
$F(\Omega,z,z_0)$ is $G_\delta$ and the proof is complete.
\end{proof}

We now have all the prerequisites to prove the next theorem:

\begin{theorem}
Let $\Omega \subset \mathbb{C},\Omega\neq \mathbb{C}$ be a domain 
and $z_0\in\partial\Omega$ such that (*) holds.
Then the set of functions which are non extendable at $z_0$, 
$U_1(\Omega,z_0)$,
is a dense and $G_\delta$ subset of $H(\Omega)$.
\end{theorem}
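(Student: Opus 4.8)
The plan is to express $U_1(\Omega,z_0)$ as a countable intersection of the classes $F(\Omega,z,z_0)$ studied in the preceding proposition, and then invoke Baire's theorem. By Proposition (the characterization of $U_1$), a function $f\in H(\Omega)$ belongs to $U_1(\Omega,z_0)$ if and only if there is a sequence $z_n\to z_0$ in $\Omega\cap D(z_0,\delta)$ with $R_{z_n}(f)\le |z_n-z_0|$; equivalently, by the same proposition (the implication from a dense subset), $f\in U_1(\Omega,z_0)$ if and only if $R_z(f)\le|z-z_0|$ for every $z$ in some fixed countable set $A$ that is dense in $\Omega\cap D(z_0,\delta)$.

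First I would fix once and for all a countable dense subset $A=\{z_j:j=1,2,\dots\}$ of $\Omega\cap D(z_0,\delta)$; such a set exists since $\Omega\cap D(z_0,\delta)$ is a nonempty open subset of $\mathbb C$ (it is nonempty because $z_0\in\partial\Omega$). Then I would record the set-theoretic identity
\[
U_1(\Omega,z_0)=\bigcap_{j=1}^{\infty} F(\Omega,z_j,z_0),
\]
justifying "$\subseteq$" by the implication $(i)\Rightarrow(ii)\Rightarrow(iii)$ of the characterization proposition (each $z_j$ lies in the dense set, so $R_{z_j}\le|z_j-z_0|$, i.e. $f\in F(\Omega,z_j,z_0)$), and "$\supseteq$" by $(iii)\Rightarrow(i)$ (if $R_z\le|z-z_0|$ for every $z$ in the dense set $A$, then $f\in U_1(\Omega,z_0)$).

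Next I would apply the previous proposition, which asserts that for each fixed $z_j\in\Omega\cap D(z_0,\delta)$ the class $F(\Omega,z_j,z_0)$ is a dense $G_\delta$ subset of the Fréchet space $H(\Omega)$. A countable intersection of $G_\delta$ sets is $G_\delta$, so $U_1(\Omega,z_0)$ is $G_\delta$. Since $H(\Omega)$ is a Fréchet space, hence a Baire space, a countable intersection of dense $G_\delta$ sets is again dense; therefore $U_1(\Omega,z_0)$ is dense and $G_\delta$ in $H(\Omega)$, which is the claim.

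The only genuinely substantive point is the verification of the displayed identity, and within it the direction "$\supseteq$": I must make sure the hypothesis $(*)$ is in force so that the characterization proposition applies, and that the fixed countable set $A$ is honestly dense in the whole of $\Omega\cap D(z_0,\delta)$ rather than in a proper subset — the implication $(iii)\Rightarrow(i)$ of that proposition requires density there. Everything else (stability of $G_\delta$ under countable intersections, Baire category for Fréchet spaces) is standard, so I do not expect any obstacle beyond bookkeeping.
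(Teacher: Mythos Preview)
Your proposal is correct and follows essentially the same route as the paper: write $U_1(\Omega,z_0)$ as a countable intersection $\bigcap_j F(\Omega,z_j,z_0)$ over a countable dense set, invoke the preceding proposition that each $F(\Omega,z_j,z_0)$ is dense $G_\delta$, and conclude by Baire. The only cosmetic difference is that the paper takes $A=(\mathbb{Q}+i\mathbb{Q})\cap\Omega$ whereas you take a countable dense subset of $\Omega\cap D(z_0,\delta)$; your choice is in fact better aligned with the hypotheses of the characterization proposition, so no objection there.
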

\begin{proof}
Let $A=(\mathbb{Q}+i\mathbb{Q})\cap \Omega$, which is a countable
dense subset of $\Omega$. From Proposition 3.2
the set $\bigcap\limits_{\zeta\in A}F(\Omega,\zeta,z_0)=U_1(\Omega,z_0)$
and from Baire's theorem it is a dense and $G_\delta$ subset of $H(\Omega)$.
\end{proof}

We will now use Theorem 3.5 to prove some nice results.

The open unit disk and the unit circle will be denoted by
$$D=\left\{z\in \mathbb{C}:|z|<1\right\}$$ and
$$T=\left\{e^{it}: t\in \bbR\right\}$$ respectively.

\begin{remark}
We can easily see that for $\Omega=D,z_1\in D$ and 
$\Omega=\mathbb{C}\setminus \overline D,z_2\in \mathbb{C}\setminus 
\overline D$ the open sets $D(z_1,r_1)\cap D$ and 
$D(z_1,r_1)\cap \mathbb{C}\setminus 
\overline D$ are connected. Thus Proposition 3.2 holds true
for the domains $D,\mathbb{C}\setminus \overline D$ and
for every $z_0 \in T.$
\end{remark}

A function $u:T \rightarrow \mathbb{C}$ belongs to the space $C(T)=C^0(T)$ if it
is continuous. The topology of $C(T)$
is defined by the seminorms
$$\sup\limits_{\theta \in \mathbb{R}}\left|\dfrac{d^lf(e^{i\theta})}
{d{\theta}^l}\right|$$
The same topology can be defined by the seminorms
$$\sup\limits_{\theta \in \mathbb{R}}\left|\dfrac{d^lf(e^{i\theta})}
{d{(e^{i\theta})^l}}\right|$$

A function $u:T \rightarrow \mathbb{C}$ belongs to the
space $C^k(T), k=1,2,...$ if it is k times continuously 
differentiable with respect to $\theta\in\bbR$. The topology of $C^k(T)$
is defined by the seminorms
$$\sup\limits_{\theta \in \mathbb{R}}\left|\dfrac{d^lf(e^{i\theta})}
{d{\theta}^l}\right|,l=0,1,2...,k$$
The same topology can be defined by the seminorms
$$\sup\limits_{\theta \in \mathbb{R}}\left|\dfrac{d^lf(e^{i\theta})}
{d{(e^{i\theta})^l}}\right|,l=0,1,2...,k$$
The above spaces are Banach spaces and thus Baire's Theorem is at our disposal.

A function $u:T \rightarrow \mathbb{C}$ belongs to the
space $C^\infty(T)$ if it is infinitely differentiable with
respect to $\theta\in\bbR$. The topology of $C^\infty(T)$
is defined by the seminorms
$$\sup\limits_{\theta \in \mathbb{R}}\left|\dfrac{d^lf(e^{i\theta})}
{d{\theta}^l}\right|,l=0,1,2...$$
Also, the same topology can be defined by the seminorms
$$\sup\limits_{\theta \in \mathbb{R}}\left|\dfrac{d^lf(e^{i\theta})}
{d{(e^{i\theta})^l}}\right|,l=0,1,2...$$
Equivalently the same topology 
can be both defined by the seminorms
 $$\sum\limits_{n \in \mathbb{Z}}  {|a_n|},\sum\limits_{n 
\in \mathbb{Z}\setminus \left\{0\right \}}
|n|^l|a_n|, l=1,2,3,...,$$ and the seminorms
$$\sup\limits_{n \in \mathbb{Z}}|a_n|,\sup\limits_{n \in \mathbb{Z}
\setminus \left\{0\right \}}|n|^l|a_n|,l=1,2,3,...,$$ where 
$$a_n=\hat{u}(n)=\dfrac{1}{2\pi}\int_{0}^{2\pi}{f(e^{i\theta})e^{-in
\theta}d\theta}$$ is the Fourier coefficient of $f \in C^\infty(T)$. 
Moreover, one can easily see that a continuous
function $u:T \rightarrow \mathbb{C}$ 
is of class $C^\infty(T)$ if and only if its 
Fourier coefficients $a_n$,$n \in \mathbb{Z}$ satisfy
$P(|n|)a_n \rightarrow 0$ for every polynomial $P$
when $n$ approaches $\pm \infty$. 
An easy result of the above is that
trigonometric polynomials are dense in $C^\infty(T)$,
since $$\sum\limits_{n=-N}^N{a_ne^{in\theta}}\xrightarrow[\
N\to \infty]{} \sum\limits_{n=-\infty}^\infty{a_ne^{in\theta}}$$ for every 
$f(e^{i\theta})=\sum\limits_{n=-\infty}^\infty{a_ne^{in\theta}}\in C^\infty(T)$.

Another interesting space is the space of holomorphic functions at D, for which 
every derivative extends continuously at T, which will be denoted as $A^\infty(D)$.
The topology of $A^\infty(D)$ is defined by the seminorms
$$\sup\limits_{z\in D}\left| {\dfrac{d^lf(z)} {dz^l}}\right|,l=0,1,2,...,$$
Equivalently, it can be defined by the seminorms
$$\sup\limits_{\theta \in \bbR}\left|{\dfrac{d^lf(e^{i\theta})}{d\theta^l}}
\right|,l=0,1,2,...$$ If $$f(z)=\sum\limits_{n=0}^\infty {a_nz^n},|z|<1,$$ then
the topology of $A^\infty(D)$ is also
induced by the seminorms $$\sum\limits_{n=0}^\infty  {|a_n|},\sum
\limits_{n=0}^\infty |n|^l|a_n|,l=1,2,3,...$$ 
In addition, the same topology is defined by the
seminorms $$\sup\limits_{n=0,1,...}|a_n|,\sup\limits_{n =1,2,...}
|n|^l|a_n|,l=1,2,3,...$$ 
Also, as above, if $f(z)=\sum\limits_{n=0}^
\infty{a_nz^n},|z|<1$, then $f$ belongs to $A^\infty(D)$
if and only if $P(n)a_n \rightarrow 0$, as $n \rightarrow \infty$
for every polynomial $P$. It follows that a continuous function
$u:T\rightarrow \mathbb{C}$ is extended as $f\in A^\infty(D)$ if and only if
$\hat{u}(n)=0 $ $\forall n<0$ and $u\in C^\infty(T)$. Since
$C^\infty(T)$ and $A^\infty(D)$ are Frechet
spaces, Baire's theorem is at our disposal. Moreover,
it is easy to see that polynomials are dense in $A^\infty(D)$, 
although the same question is open in the case
of an arbitrary simply connected $\Omega\subset \mathbb{C}$, such that
$(\mathbb{C}\cup\left\{\infty \right\})
\setminus \overline {\Omega}$ is connected and ${\overline{\Omega}}^o=\Omega$.

At this moment, we define the classes
$A_0^\infty(D)=\left\{f\in A^\infty(D): f(0)=0\right \}$ and
$$A_0^{\infty}((\mathbb{C}\setminus \overline{D})\cup\left\{\infty\right\})=
\left\{f\in A^\infty(\mathbb{C}\setminus \overline{D}): 
\lim\limits_{z\rightarrow\infty} {f(z)}=0\right \},$$
which will be needed afterwards.

Let $u:T \rightarrow \mathbb{C}$, then the functions
$$f(z)=\sum\limits_{n=0}^\infty{a(n)z^n}$$ and 
$$g(z)=\sum\limits_{n=-\infty}^{-1}\overline{a(n)}z^{-n}$$ 
belong to the classes $A^\infty(D)$ and $\overline{A_0^\infty(D)}$ 
respectively and $u=f|_{T}+\overline{g|_{T}}$. Since 
${A^{\infty}(D)}\cap \overline{{A_{0}^{\infty}(D)}}=\left\{0\right\}$ 
we have that $$ C^{\infty}(T)= {A^{\infty}(D)}|_{T}
\oplus \overline{{A_{0}^\infty(D)}}|_{T},$$ because also
$\pi_1 : C^\infty(T) \rightarrow A^\infty(D)$,
$\pi_2 : C^\infty(T) \rightarrow A_{0}^\infty(D)$, $\pi_1(u)=f
,\pi_2(u)=g$, where $f,g$ as above, are continuous.
Lets see for example the continuity of $\pi_1$.
If $u\in C^\infty(T)$ and the sequence $u_m\in C^\infty(T)$,
$m=1,2,...$ converges to $u$, then 
$$\lim\limits_{m \rightarrow \infty}{\sum\limits_{n\in \bbZ}
 {|a_m(n)-a(n)|}}=0,$$ $$\lim\limits_{m \rightarrow \infty}{\sum
\limits_{n \in \mathbb{Z} \setminus \left\{0\right \}}
|n|^l|a_m(n)-a(n)|}=0,$$ $l=1,2,3,...$. Consequently,
$$\sum\limits_{n=0}^\infty {|a_m(n)-a(n)|}\leq\sum\limits_{n\in \bbZ}
 {|a_m(n)-a(n)|}\xrightarrow[\
m\to \infty]{} 0,$$ 
$${\sum\limits_{n=1}^\infty |n|^l|a_m(n)-a(n)|}\leq{\sum\limits_{n 
\in \mathbb{Z}\setminus \left\{0\right \}}
|n|^l|a_m(n)-a(n)|}\xrightarrow[\
m\to \infty]{} 0,$$ $l=1,2,3,...$. Thus we deduce that 
$\pi_1(u_n)$ converges to $\pi_1(u)$ and that $\pi_1$ is 
continuous. Similarly we can see that $\pi_2$ is continuous.

If we think about $h=f+\bar{g}$, we will notice that the function
$h$ is harmonic in $D$ 
and, if every differential operator 
$$L_{k,l}=\dfrac{{\partial}^l}{\partial z^l}\dfrac{{\partial}^k}
{\partial \bar{z}^k}$$ is applied on $h$, it defines
a harmonic function in $D$ which extends
continuously in $\overline{D}$. Thus the space $H_\infty(D)$ is
the space of harmonic functions 
$h: D \rightarrow \mathbb{C}$ such that $L_{k,l}(h)$ extends
continuously in $\overline{D}$.
We see that for $k\geq 1$ and $l\geq 1$ 
$L_{k,l}(h)=0$, since $h$ is harmonic.
The topology of $H_\infty(D)$ is defined 
by the seminorms $$\sup\limits_{|z|\leq 1}{|L_{k,l}(h)(z)|},k,l\geq 
0,$$ and equivalently, either by the seminorms
$$\sum\limits_{n \in \bbZ}  
{|a(n)|},\sum\limits_{n 
\in \mathbb{Z}\setminus \left\{0\right \}}
|n|^l|a(n)|,l=1,2,3,...$$ or by the seminorms $$\sup\limits_{n 
\in \mathbb{Z}}|a(n)|,\sup\limits_{n \in \mathbb{Z}
\setminus \left\{0\right \}}|n|^l|a(n)|,l=1,2,3,...$$ 
In this way, the space $H_{\infty}(D)$ becomes
Frechet space. Also, we can see that 
$$H_{\infty}(D)|_{T}= {A^{\infty}(D)}|_{T}
\oplus \overline{{A_{0}^\infty(D)}}|_{T}=C^{\infty}(T).$$

However, the aforementioned analysis is not the only interesting analysis of
$C^\infty(T)$. Let $u:T \rightarrow \mathbb{C}$, then the functions
$$f(z)=\sum\limits_{n=0}^\infty{a(n)z^n}$$ and 
$$g(z)=\sum\limits_{n=-{\infty}}^{-1}{a(n)z^n},$$
where $f,g$ belong to the classes
$A^\infty(D)$ and $A_0^{\infty}((\mathbb{C}
\setminus \overline{D})
\cup\left\{\infty\right\})$ respectively and
$u=f|_{T}+{g|_{T}}$.

Also, we notice that 
$$f(z)=\dfrac{1}{2\pi i}\int_{T}{\dfrac{u(w)}{w-z}dw},|z|<1$$ and
$$g(z)=\dfrac{1}{2\pi i}\int_{T}{\dfrac{u(1/w)}{w-1/z} dw}-\hat{u}(0),|z|>1.$$
We point out that we already know that $f$ and $g$ can be extended
continuously at $T$, so the functions $f|_{T}$ and $g|_{T}$ 
are defined as limits of the above integral expressions.

Since $A^\infty(D)\cap A_{0}^\infty((\mathbb{C}\setminus 
\overline{D})\cup\left\{\infty\right\})=\left\{{0}\right\}$ 
and the functions 
$p_1 :C^\infty(T) \rightarrow A^\infty(D)$,
$p_2 :C^\infty(T) \rightarrow A_{0}^\infty((\mathbb{C}\setminus 
\overline{D})\cup\left\{\infty\right\})$,$p_1(u)=f$,$p_2(u)=g$,
where $f,g$ as above, are continuous,
we have that $$ C^{\infty}(T)={A^{\infty}(D)}|_{T}
\oplus {A_{0}^{\infty}((\mathbb{C}
\setminus \overline{D})\cup\left\{\infty\right\})}|_{T}.$$

We return now to $C^k(T)$.

\begin{defi}
Let $z_0\in T$,$k=0,1,2,...$ or $\infty$. A function $u \in C^k(T)$ belongs to the 
class $U_2(T,z_0,k)$ if there is no pair of a domain $\Omega(z_0,r)=D(z_0,r)
\cap\left\{z\in\mathbb{C}:|z|> 1\right\}$, $r>0$ 
and a continuous function $\lambda:T\cup \Omega(z_0,r)\rightarrow 
\mathbb{C}$ which is also holomorphic at
$\Omega(z_0,r)$ and satisfies $\lambda|_{D(z_0,r)\cap T}=u|_{D(z_0,r)\cap T}.$ 
\end{defi} 

\begin{defi}
Let $z_0\in T$,$k=0,1,2,...$ or $\infty$. A function $u \in C^k(T)$ belongs to the 
class $U_3(T,z_0,k)$ if there is no pair of a domain $P(z_0,r)=D(z_0,r)\cap D,r>0$ 
and a continuous function $h:T\cup P(z_0,r)\rightarrow 
\mathbb{C}$ which is also holomorphic at
$P(z_0,r)$ and satisfies $h|_{D(z_0,r)\cap T}=u|_{D(z_0,r)\cap T}.$ 
\end{defi} 

\begin{defi}
Let $z_0\in T$,$k=0,1,2,...$ or $\infty$. A function $u \in C^k(T)$ belongs to the 
class $U_4(T,z_0,k)$ if there are neither a pair of a domain $\Omega(z_0,r)=D(z_0,r)
\cap\left\{z\in\mathbb{C}:|z|> 1\right\},r>0$ 
and a continuous function $\lambda:T\cup \Omega(z_0,r)\rightarrow 
\mathbb{C}$ which is also holomorphic at
$\Omega(z_0,r)$ and satisfies $\lambda|_{D(z_0,r)\cap T}
=u|_{D(z_0,r)\cap T}$ nor pair of a domain $P(z_0,r)=D(z_0,r)\cap 
D,r>0$ and a continuous function $h:T\cup P(z_0,r)\rightarrow 
\mathbb{C}$ which is also holomorphic at
$P(z_0,r)$ and satisfies $h|_{D(z_0,r)\cap T}=u|_{D(z_0,r)\cap T}.$  
\end{defi}

\begin{remark}
At the definitions 3.7, 3.8, 3.9, if $u\in C^\infty(T)$ automatically follows that 
every derivatives of $\lambda$ and $h$ is 
continuous at the sets $(D(z_0,r)\cap T)\cup\Omega(z_0,r)$, 
$(D(z_0,r)\cap T)\cup P(z_0,r)$ respectively.
To see that, if for a $u\in C^\infty(T),u=f|_{T}+g|_{T}$
there is an open disk $D(z_0,r)$ and $\lambda$ as in Definition 
1.7, then $f$ extends continuously at $D(z_0,r)$ and
holomorphically at $\Omega(z_0,r)$. From a well-known result $f$ 
extends holomorphically at $D\cup D(z_0,r)$ and, since every derivative of
$g$ extends continuously at $T$, every 
derivative of $\lambda$ is continuous at $(D(z_0,r)\cap T)\cup\Omega(z_0,r)$. 
Similarly we deduce the corresponding result for $h$.
\end{remark}

The classes $\bigcap\limits_{z_0 \in T} U_2(T,z_0,k)$, 
$\bigcap\limits_{z_0 \in T}U_3(T,z_0,k)$,
$\bigcap\limits_{z_0 \in T}U_4(T,z_0,k)$ will be denoted
as $U_2(T,k)$, 
$U_3(T,k)$, $U_4(T,k)$  respectively, where $k=0,1,2,...$ or 
$\infty$. 

If $u \in C^k(T) ,k\geq 2$ or $\infty$, then the functions
$$f(z)=\sum\limits_{n=0}^\infty{a(n)z^n}$$ and
$$g(z)=\sum\limits_{n=-{\infty}}^{-1}{a(n)z^n}$$
are holomorphic at $D, \mathbb{C}\setminus \overline{D}$ respectively,
continuous at T and $u=f|_{T}+g|_{T}$.
Since $g$ is holomorphic at $\mathbb{C} \setminus \overline{D}$, 
if $u$ extends continuously at $D(z_0,r)\cap
\left\{z\in \mathbb{C}:|z|\geq1\right\}$
and holomorphically at 
$D(z_0,r)\cap$ $\left\{z\in \mathbb{C}:|z|> 1\right\}$, where $z_0 \in T,r>0$ then 
$f|_T=u-g|_T$ extends holomophically at $D(z_0,r)$. 
On the other hand, it is obvious that, if $f$ extends 
holomorphically at $D(z_0,r)$, 
then $u$ extends continuously at $D(z_0,r)\cap\left\{z\in \mathbb{C}:
|z|\geq 1\right\}$ holomorphically at 
$D(z_0,r)\cap\left\{z\in \mathbb{C}:|z|> 1\right\}$. On the following theorems,
when we use $f,g$ are as above.

\begin{theorem}
Let $z_0 \in T$. The class $U_2(T,z_0,\infty)$ is a dense and $G_\delta$ 
subset of $C^\infty(T)$.
\end{theorem}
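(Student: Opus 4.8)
The plan is to reduce the statement to the corresponding genericity fact in $A^\infty(D)$ by means of the Cauchy‑transform splitting. Throughout I write each $u\in C^\infty(T)$ as $u=f|_T+g|_T$ with $f=p_1(u)\in A^\infty(D)$ and $g=p_2(u)\in A_0^\infty((\mathbb{C}\setminus\overline D)\cup\{\infty\})$, the second decomposition of $C^\infty(T)$ described above. The first, and main, step is to verify the identity
$$U_2(T,z_0,\infty)=p_1^{-1}\bigl(U_1(D,z_0)\cap A^\infty(D)\bigr).$$
I will prove both directions. If $u\notin U_2(T,z_0,\infty)$, there are $r>0$ and $\lambda$ as in Definition 3.7; set $F=f$ on $D$ and $F=\lambda-g$ on $\Omega(z_0,r)$. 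Since $g$ is holomorphic on $\{|z|>1\}\supseteq\Omega(z_0,r)$ and continuous on $T$, and $\lambda=u=f+g$ on $D(z_0,r)\cap T$, the two pieces agree on $D(z_0,r)\cap T$ and give a function that is continuous on the open set $D\cup D(z_0,r)=D\cup(D(z_0,r)\cap T)\cup\Omega(z_0,r)$ and holomorphic off the circle $T$; Proposition 2.1\,(iii) (with $L=T$) then makes $F$ holomorphic on all of $D\cup D(z_0,r)$, so $f$ extends holomorphically across $z_0$ and $f\notin U_1(D,z_0)$. Conversely, if $f$ extends to a holomorphic $F$ on some $D\cup D(z_0,r)$, then $\lambda:=u$ on $T$, $\lambda:=F+g$ on $\Omega(z_0,r)$, is continuous (on $D(z_0,r)\cap T$ one has $F=f$, hence $F+g=f+g=u$) and holomorphic on $\Omega(z_0,r)$, so $u\notin U_2(T,z_0,\infty)$. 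This is exactly the mechanism of Remark 3.10, and the value $\infty$ of $k$ plays no role.

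Next I will use the splitting to transport genericity to $C^\infty(T)$. Since $p_1,p_2$ are continuous and $(f,g)\mapsto f|_T+g|_T$ is continuous, the map $u\mapsto(p_1(u),p_2(u))$ is a topological isomorphism of $C^\infty(T)$ onto $A^\infty(D)\times A_0^\infty((\mathbb{C}\setminus\overline D)\cup\{\infty\})$, under which $p_1^{-1}(S)$ corresponds to $S\times A_0^\infty((\mathbb{C}\setminus\overline D)\cup\{\infty\})$. A product of a dense set with the whole second factor is dense, and a product of a $G_\delta$ set with the whole second factor is $G_\delta$ (intersect the products of the defining open sets with the second factor). Hence it suffices to show that $U_1(D,z_0)\cap A^\infty(D)$ is dense and $G_\delta$ in $A^\infty(D)$.

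For this I will rerun the arguments of Propositions 3.2 and 3.4 and Theorem 3.5 inside $A^\infty(D)$ instead of $H(D)$. Condition (*) holds for $\Omega=D$ and $z_0\in T$ (Remark 3.6), so by Proposition 3.2 a function $f\in A^\infty(D)$ lies in $U_1(D,z_0)$ if and only if $R_z(f)\le|z-z_0|$ for every $z$ in the countable dense set $A=(\mathbb{Q}+i\mathbb{Q})\cap D$; and for each fixed $z$,
$$\{f\in A^\infty(D):R_z(f)\le|z-z_0|\}=\bigcap_{s=1}^\infty\bigcap_{n=1}^\infty\bigcup_{k=n}^\infty\Bigl\{f\in A^\infty(D):\frac{1}{|z-z_0|}-\frac1s<\sqrt[k]{\frac{|f^{(k)}(z)|}{k!}}\Bigr\}$$
is $G_\delta$, since $|f^{(k)}(z)|$ is dominated by a defining seminorm of $A^\infty(D)$ and so $f\mapsto f^{(k)}(z)$ is continuous; intersecting over $z\in A$ keeps it $G_\delta$. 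For density I will note that every $f\in A^\infty(D)$ whose natural boundary is $T$ satisfies $R_z(f)=1-|z|=\mathrm{dist}(z,T)\le|z-z_0|$ for all $z\in D$, hence lies in $U_1(D,z_0)$ by Proposition 3.2, and that such functions are dense in $A^\infty(D)$ — this is precisely the genericity of non‑extendability in $A^\infty(D)$ recorded in [8], and a concrete dense family is obtained by adding to the dense set of polynomials small multiples of a lacunary series $\sum_n a_nz^{2^n}\in A^\infty(D)$ whose circle of convergence is $T$. Baire's theorem ($A^\infty(D)$ being Fréchet) then yields the reduced claim, and pulling back through $p_1$ finishes the proof. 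The step I expect to be the main obstacle is the first one: gluing the three overlapping regions $D$, $D(z_0,r)\cap T$ and $\Omega(z_0,r)$ correctly and upgrading the merely continuous gluing to a holomorphic one through the removability of the circle $T$; everything afterwards is a faithful transcription of the earlier propositions into the finer topology of $A^\infty(D)$ together with the elementary behaviour of dense and $G_\delta$ sets under the direct‑sum decomposition of $C^\infty(T)$.
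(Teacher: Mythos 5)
Your proposal is correct and follows essentially the same route as the paper: the Cauchy-transform splitting $u=f|_T+g|_T$, the observation that one-sided extendability of $u$ across $z_0$ from outside is equivalent to holomorphic extendability of $f\in A^\infty(D)$ at $z_0$, the radius-of-convergence characterization of $U_1(D,z_0)$ written as a countable intersection of open sets, and density via the non-extendable functions of $A^\infty(D)$ from [8]. The only difference is cosmetic: you verify openness in $A^\infty(D)$ and pull back through the continuous projection $p_1$ (equivalently the isomorphism $u\mapsto(f,g)$), whereas the paper checks openness of the corresponding sets directly in $C^\infty(T)$ via a Fourier-coefficient estimate, which amounts to the same continuity statement.
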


\begin{proof}
From the above and Propositions 3.2 and 3.4 
$$U_2(T,z_0)=\bigcap\limits_{l=1}^\infty \bigcap\limits_{s=1}^\infty 
\bigcap\limits_{n=1}^\infty \bigcup\limits_{k=n}^\infty
\left\{u\in C^\infty(T): \dfrac{1}{d_{z_l}}-\dfrac{1}{s}<
\sqrt[k]{\dfrac{|f^{(k)}(z_l)|}{k!}}\right\},$$ where $z_l\in D$ is a dense sequence 
of $D$.

We will at first prove that the sets $$A(l,s,k)=\left\{u\in C^\infty(T): 
\dfrac{1}{d_{z_l}}-\dfrac{1}{s}<\sqrt[k]{\dfrac{|f^{(k)}(z_l)|}{k!}}\right\}$$ 
are open. Let $u_p,p=1,2,3,...$ be a sequence at $C^\infty(T)$ converging to a 
$u \in A(l,s,k)$ at the topology of $C^\infty(T)$. Then 
\begin{align*}
|f_p(e^{i\theta})-f(e^{i\theta})|
\leq \sum\limits_{i=0}^\infty |a_p(i)-a(i)|=\sum\limits_{i=1}^\infty\dfrac{1}{i^2}
|a^{''}_p(i)-a^{''}(i)|\leq \\ \leq(\sum\limits_{i=1}^\infty\dfrac{1}{i^2})
\sup\limits_{\theta\in \mathbb{R}}\left|\dfrac{d^2(u_p(e^{i\theta})-u(e^{i\theta}))}
{d\theta^2}\right|
\xrightarrow[\
p\to \infty]{} 0
\end{align*}
where $a^{''}_p(i),a^{''}(i)$ are the Fourier coefficients of 
$\dfrac{d^2u_p(e^{i\theta})}{d\theta^2},\dfrac{d^2u(e^{i\theta})}{d\theta^2}$ 
respectively. Thus $f_p$ converges uniformly to $f$ at T and from maximum modulus 
principle, $f_p$ converges uniformly to $f$ at $\overline {D}$. Therefore, there 
exists a $n_0\in \mathbb{N}$ such that $$\dfrac{1}{d_{z_l}}-\dfrac{1}{s}<\sqrt[k]
{\dfrac{|f_p^{(k)}(z_l)|}{k!}}$$ for every $p\geq n_0$. Thus, 
$u_p\in A(l,s,k)$ for $p\geq n_0$. It follows that the set $A(l,s,k)$ is 
open at $C^\infty(T)$ and the class $U_2(T,z_0)$ is $G_\delta$ subset of 
$C^\infty(T)$.

From [8] we can prove that $U_2(T,z_0,\infty)$ is also dense at $C^\infty(T)$. 
Indeed,
let $v\in C^\infty(T)$,$v=f|_T+g|_T$ then $f$ belongs to $A^\infty(D)$ and thus 
there is a sequence of functions $f_n \in A^\infty(D),n=1,2,3,...$ which are also 
holomorphic exactly at $D$ converging to $f$ at the topology of $A^\infty(D)$. Then 
${f_n}|_T+g|_T$ converges to $v$ at the topology of $C^\infty(T)$. 
But ${f_n}|_T+g|_T \in U_2(T,z_0)$ and now we can conclude that the class
$U_2(T,z_0)$ is dense subset of $C^\infty(T)$. The proof is complete.
\end{proof}

\begin{theorem}
Let $z_0 \in T$,$k=0,1,2,...$. The class $U_2(T,z_0,k)$ is a dense and $G_\delta$ 
subset of $C^k(T)$.
\end{theorem}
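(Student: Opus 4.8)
The plan is to repeat in the Banach space $C^k(T)$ the reasoning of Theorem 3.11, the only delicate point being that the weaker $C^k$-topology must still control the analytic part of $u$. For $k\ge 2$ write $u=f|_T+g|_T$ with $f(z)=\sum_{n\ge0}a(n)z^n\in H(D)$ and $g(z)=\sum_{n\le-1}a(n)z^n$ holomorphic in $\mathbb{C}\setminus\overline D$; since $C^k(T)\subset C^2(T)$ we have $|a(n)|\le\frac1{n^2}\sup_\theta|d^2u(e^{i\theta})/d\theta^2|$, so $\sum_n|a(n)|<\infty$, $g$ is continuous up to $T$, and, exactly as recalled just before Theorem 3.11, $u\in U_2(T,z_0,k)$ iff $f$ extends holomorphically to some disk $D(z_0,r)$, i.e. $f\in U_1(D,z_0)$. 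For $k=1$ the same holds because $u'\in L^2(T)$ and Cauchy--Schwarz give $\sum_n|a(n)|\le|a(0)|+\sqrt{\pi^2/3}\,\|u'\|_{L^2}<\infty$. For $k=0$ one instead lets $f$ be the inner Cauchy transform $f(z)=\frac1{2\pi i}\int_T\frac{u(w)}{w-z}\,dw$ ($z\in D$) and derives the same equivalence from the Sokhotski--Plemelj jump relation for continuous densities together with the removability statement of Proposition 3.1(iii) for circular arcs. In every case the matter is reduced to $U_1(D,z_0)$, for which Remark 3.6 makes Propositions 3.2 and 3.4 applicable.

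For the $G_\delta$ property, fix a dense sequence $(z_l)_{l\ge1}$ in $D$, put $d_{z_l}=|z_l-z_0|>0$, and argue as in Theorem 3.11: Proposition 3.2(iii) together with the Cauchy--Hadamard formula for the radius of convergence gives
\[
U_2(T,z_0,k)=\bigcap_{l=1}^{\infty}\bigcap_{s=1}^{\infty}\bigcap_{n=1}^{\infty}\bigcup_{m=n}^{\infty}A(l,s,m),\qquad
A(l,s,m)=\Big\{u\in C^k(T):\tfrac1{d_{z_l}}-\tfrac1s<\sqrt[m]{\tfrac{|f^{(m)}(z_l)|}{m!}}\Big\}.
\]
It therefore suffices to show that $u\mapsto f^{(m)}(z_l)$ is continuous on $C^k(T)$, so that each $A(l,s,m)$ is open. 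For $k\ge2$ the estimate above makes $f_p\to f$ uniformly on $T$, hence on $\overline D$ by the maximum modulus principle, hence $f_p^{(m)}(z_l)\to f^{(m)}(z_l)$ by the Cauchy estimates; for $k=1$ one uses the $L^2$-bound instead, and for $k=0$ the pointwise bound $|f(z)|\le\big(\sup_\theta|u(e^{i\theta})|\big)/(1-|z|)$, which already shows that $u\mapsto f$ is continuous from $C^0(T)$ into $H(D)$. Hence $U_2(T,z_0,k)$ is a $G_\delta$ subset of $C^k(T)$.

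For density, note that the defining condition of the class $U_2$ does not involve $k$, so $U_2(T,z_0,\infty)\subseteq U_2(T,z_0,k)$; by Theorem 3.11 the former is dense in $C^\infty(T)$, and since $C^\infty(T)\hookrightarrow C^k(T)$ is continuous with dense range (trigonometric polynomials being dense in $C^k(T)$), $U_2(T,z_0,k)$ is dense in $C^k(T)$. Equivalently, given $v=f|_T+g|_T$ one uses [8] to choose functions $f_n$ holomorphic exactly in $D$ with $f_n\to f$, so that $f_n|_T+g|_T\to v$ in $C^k(T)$ while lying in $U_2$. Baire's theorem then finishes the proof. The genuinely new work is contained in the first paragraph --- carrying the decomposition $u=f|_T+g|_T$ and the equivalence with $f\in U_1(D,z_0)$ into the coarser topologies, above all for $k=0$, where $f$ is no longer the sum of an absolutely convergent series and one must control the boundary behaviour of the Cauchy transform of a merely continuous function; once that is settled, the $G_\delta$ argument is a transcription of Theorem 3.11 and density is essentially automatic.
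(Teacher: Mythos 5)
Your treatment of $k\ge 2$ is exactly the paper's (a transcription of Theorem 3.11), and your density argument ($U_2(T,z_0,\infty)\subseteq U_2(T,z_0,k)$ plus density of $C^\infty(T)$ in $C^k(T)$) is also the paper's. Your $k=1$ argument is correct and is in fact a more direct route than the paper's: Parseval and Cauchy--Schwarz give $\sum_{n\neq 0}|a(n)|\le \frac{\pi}{\sqrt3}\|u'\|_{L^2}<\infty$, so $f$ and $g$ are continuous up to $T$, $u=f|_T+g|_T$, and the equivalence $u\in U_2(T,z_0,1)\iff f\in U_1(D,z_0)$ goes through as for $k\ge2$. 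The genuine gap is the case $k=0$. There you take $f$ to be the Cauchy transform of $u$ and assert the equivalence with $f\in U_1(D,z_0)$ ``from the Sokhotski--Plemelj jump relation for continuous densities together with the removability statement'' (which, incidentally, is Proposition 2.1(iii), not 3.1(iii)). The Plemelj formulas are not available for merely continuous densities: they require Dini or H\"older regularity, and for a general $u\in C(T)$ the analytic part $f$ need not extend continuously, or even boundedly, to $T$ (the Riesz projection is unbounded on $C(T)$). For the same reason Proposition 2.1(iii) cannot be invoked: gluing $f$ with the one-sided extension of $u$ along the arc presupposes that $f$ has continuous boundary values near $z_0$, which is precisely what is unknown when $k=0$. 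Hence the set identity $U_2(T,z_0,0)=\bigcap_l\bigcap_s\bigcap_n\bigcup_{m\ge n}A(l,s,m)$, on which your whole $G_\delta$ argument rests, is unproved as it stands.

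The equivalence for $k=0$ is in fact true, but it needs a real argument, not a citation of Plemelj. For the direction ``$u$ one-sidedly extendable at $z_0$ $\Rightarrow$ $f$ extendable'', one can deform the contour of the Cauchy integral over a small subarc around $z_0$ into the outer one-sided region, using the holomorphic extension $\lambda$ and a limiting Cauchy-theorem argument of the type used in the proof of Proposition 2.1. For the converse one can use the Abel-means jump relation $f(r\zeta)-g(\zeta/r)=(P_r\ast u)(\zeta)$, $\zeta\in T$, $0<r<1$, where $g$ is the exterior Cauchy transform and $P_r$ the Poisson kernel: if $f$ extends holomorphically across an arc around $z_0$, this identity shows that the dilates of $g$ converge uniformly on compact subarcs, hence $g$ extends continuously from outside with boundary values $f|_T-u$, and $f-g$ is then the desired one-sided extension of $u$. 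The paper avoids all boundary behaviour of the Cauchy transform: it proves the statement in $C^2_0(T)$ and transfers it to $C^1_0(T)$, then to $C^1(T)$ and finally to $C^0(T)$, via the differentiation map $u\mapsto du/d\theta$, the open mapping theorem and the map $u\mapsto u-a_0$, the key point (Remark 3.13) being the complex-analytic fact that a one-sided continuous extension of $C^1$ boundary data has a continuously extendable $\theta$-derivative, so that differentiation respects the classes $U_2$. Either way, $k=0$ (and, on the paper's route, $k=1$) requires an extra ingredient that your proposal currently replaces by an inapplicable appeal to Plemelj.
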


\begin{proof}
For $k\geq 2$ the same procedure as at Theorem 3.11 shows that the 
class $U_2(T,z_0,k)$ is a $G_\delta$ subset of $C^k(T)$. The space $C^\infty(T)$ is 
dense at every $C^k(T)$. To see that, let $u\in C^k(T)$. Then the function
$\dfrac{du^k}{{d\theta}^k}$ is continuous and thus there exists a sequence 
$u_n \in C^\infty(T)$ converging to $\dfrac{du^k}{{d\theta}^k}$. We can assume that
$a_0(u_n)=0$, since 
$a_0(u_n) \to a_0\left(\dfrac{du^k}{{d\theta}^k}\right)=0$. 
Then the sequence of functions
$$U_n(e^{it})= \int_0^t{u_n(e^{i\theta})d\theta}+c$$ converges to 
$$\int_0^t{\dfrac{du^k(e^{i\theta})}{{d\theta}^k}}d\theta+c=
\dfrac{du^{k-1}(e^{it})}{{d\theta}^{k-1}}$$
for a constant $c\in\mathbb{C}$. If we repeat this procedure we will prove the 
desired.

The class $U_2(T,z_0,\infty)$ is dense at
$C^\infty(T)$ and the class $U_2(T,z_0,k)$ contains the class 
$U_2(T,z_0,\infty)$. Therefore,
the class $U_2(T,z_0,k)$ is a dense subset of $C^k(T)$.

Let $C^k_0(T),k=0,1,2,..$ be the set of functions $u\in C^k(T)$ such that $a_0=0$. 
Then the above procedure shows that the class $U_2(T,z_0,2)\cap C^2_0(T)$ is a 
$G_\delta$ subset of $C^2_0(T)$. To see that it is also a dense subset of 
$C^2_0(T)$, let $v\in C_0^2(T)$,$v=f|_T+g|_T$. 
Then $f$ belongs to $A^\infty(D)$ and thus 
there is a sequence of functions $f_n \in A^\infty(D),n=1,2,3,...$ which are also 
holomorphic exactly at $D$ converging to $f$ at the topology of $A^\infty(D)$. The
sequence 
${f_n}|_T+g|_T$ converges to $v$ at the topology of $C^\infty(T)$. 
But the sequence ${f_n}|_T+g|_T -a_0(f_n)$ also converges to $v$ and belongs to 
$U_2(T,z_0)\cap C_0^2(T)$ and now we can conclude that the class
$U_2(T,z_0)\cap C_0^2(T)$ is dense subset of $C_0^\infty(T)$. The function 
$\varphi:C^2_0(T)\rightarrow C^1_0(T)$,
$\varphi(u)=\dfrac{du}{d\theta}$ is continuous, linear, 1-1 and onto
$C^1_0(T)$. From the open mapping theorem its inverse is also continuous.
It is easy to see that 
\begin{equation}
\varphi(U_2(T,z_0,2)\cap C^2_0(T))=U_2(T,z_0,1)\cap C^1_0(T),
\end{equation}
which combined with
the nice properties of $\varphi$ and the fact that the class 
$U_2(T,z_0)\cap C_0^2(T)$
is a dense $G_\delta$ subset of $C_0^2(T)$ indicates that the class 
$U_2(T,z_0,1)\cap C^1_0(T)$ is a dense $G_\delta$ subset of $ C^1_0(T)$.

The function $F:C^1(T)\rightarrow C^1_0(T), F(u)=u-a_0$ is continuous, linear and 
onto $C^1_0(T)$ and thus open map. Also, 
$$U_2(T,z_0,1)=F^{-1}(U_2(T,z_0,1)\cap C^1_0(T)).$$ Since $F$ is continuous, the
class $U_2(T,z_0,1)$ is $G_\delta$ subset of $C^1(T)$ and since $F$ is an open map, 
it is also dense at $C^1(T)$. By applying the last method once again, it can be 
proved that the class $U_2(T,z_0,0)$ is a dense and $G_\delta$ subset of $C(T)$.
\end{proof}

\begin{remark}
In order to prove (1) we use the fact that if $f:D \rightarrow \mathbb{C}$ is 
holomorphic and I is an open arc in $T$ and f extends continuously on $D\cup I$ and
if $f|_I \in C^1(I)$, then the derivative $\dfrac{df}{d\theta}$ extends continuously
on $D\cup I$ as well. This fact is true and its proof is complex analytic. Thus,
Fourier method works for $C^\infty$ and $C^p(T)$, $p\geq 2$ without a complex 
analytic help, while for $p=0,1$ we need something from complex analysis.
\end{remark}

\begin{theorem}
Let $z_0 \in T$, $k=0,1,2,...$ or $\infty$. The class $U_3(T,z_0,k)$ is a dense and
$G_\delta$ subset of $C^k(T)$. 
\end{theorem}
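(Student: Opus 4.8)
The plan is to repeat the arguments of Theorems 3.11 and 3.12 almost verbatim, with the roles of $f$ and $g$ — and of the interior and the exterior of $T$ — interchanged. For $u\in C^k(T)$ with $k\geq 2$ write $u=f|_T+g|_T$, where $f(z)=\sum_{n\geq 0}a(n)z^n\in A^\infty(D)$ and $g(z)=\sum_{n\leq -1}a(n)z^n$ is holomorphic in $\mathbb{C}\setminus\overline{D}$, continuous on $T$ and vanishing at $\infty$. Since $f$ is already holomorphic on all of $D$, the first step is the remark that $u$ extends continuously to $T\cup P(z_0,r)$ and holomorphically to $P(z_0,r)=D(z_0,r)\cap D$ if and only if $g|_T$ does; and gluing such an extension of $g$ to $g$ itself across the circular arc $T\cap D(z_0,r)$, Proposition 2.1(iii) — together with Remark 3.6, which lets us work with $\Omega=\mathbb{C}\setminus\overline{D}$ — shows that this happens precisely when $g\notin U_1(\mathbb{C}\setminus\overline{D},z_0)$. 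Hence
$$u\in U_3(T,z_0,k)\iff g\in U_1(\mathbb{C}\setminus\overline{D},z_0),$$
which is the exact mirror of the paragraph preceding Theorem 3.11; for $k=\infty$ the extra regularity of the extensions is supplied by Remark 3.10.

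For the $G_\delta$ assertion when $k\geq 2$ I would fix a dense sequence $\zeta_l$ in $(\mathbb{C}\setminus\overline{D})\cap D(z_0,\delta)$, put $d_{\zeta_l}=|\zeta_l-z_0|$, and apply Propositions 3.2 and 3.4 to the domain $\mathbb{C}\setminus\overline{D}$ to obtain
$$U_3(T,z_0,k)=\bigcap_{l=1}^\infty\bigcap_{s=1}^\infty\bigcap_{n=1}^\infty\bigcup_{j=n}^\infty\Bigl\{u\in C^k(T):\ \frac{1}{d_{\zeta_l}}-\frac1s<\sqrt[j]{\frac{|g^{(j)}(\zeta_l)|}{j!}}\Bigr\}.$$
To see that each inner set is open I would argue as in Theorem 3.11: if $u_p\to u$ in $C^k(T)$ then $g_p\to g$ uniformly on $T$, because $\sum_{n\leq -1}|a_p(n)-a(n)|\leq\bigl(\sum_{m\geq 1}m^{-2}\bigr)\sup_\theta\bigl|\tfrac{d^2}{d\theta^2}\bigl(u_p(e^{i\theta})-u(e^{i\theta})\bigr)\bigr|\to 0$; by the maximum modulus principle applied to $g_p-g$ on $(\mathbb{C}\setminus\overline{D})\cup\{\infty\}$ the convergence is then uniform on $\{|z|\geq 1\}$, whence $g_p^{(j)}(\zeta_l)\to g^{(j)}(\zeta_l)$ by Cauchy's estimates. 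Thus $U_3(T,z_0,k)$ is $G_\delta$ for every $k\geq 2$, and for $k=\infty$ as well.

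For density I would first recall, exactly as in Theorem 3.12, that $C^\infty(T)$ is dense in $C^k(T)$, so it suffices to approximate a given $v=f|_T+g|_T\in C^\infty(T)$. Now $g$ lies in $A_0^\infty((\mathbb{C}\setminus\overline{D})\cup\{\infty\})$, which $w\mapsto 1/w$ identifies with $A_0^\infty(D)$; transporting the density statement of [8] through this identification produces functions $g_n$ holomorphic exactly in $\mathbb{C}\setminus\overline{D}$ and vanishing at $\infty$ with $g_n\to g$. Being nowhere extendable, each $g_n$ satisfies $R_\zeta=dist(\zeta,T)\leq|\zeta-z_0|$ for all $\zeta$, so $g_n\in U_1(\mathbb{C}\setminus\overline{D},z_0)$ by Proposition 3.2; then $f|_T+g_n|_T\to v$ in $C^\infty(T)$ and $f|_T+g_n|_T\in U_3(T,z_0,\infty)\subset U_3(T,z_0,k)$ by the equivalence above. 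This gives density of $U_3(T,z_0,k)$ in $C^k(T)$ for all $k\geq 2$ and for $k=\infty$. The remaining cases $k=1$ and $k=0$ are obtained by the same descent as in Theorem 3.12: pass to the closed mean-zero hyperplanes $C^2_0(T)\supset C^1_0(T)\supset C^0_0(T)$, use that differentiation is a topological isomorphism $C^2_0(T)\to C^1_0(T)\to C^0_0(T)$ by the open mapping theorem, and use that $u\mapsto u-a_0$ is a continuous open surjection $C^k(T)\to C^k_0(T)$ whose preimage of $U_3(T,z_0,k)\cap C^k_0(T)$ equals $U_3(T,z_0,k)$.

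I expect the only genuinely delicate point to be this last descent to $k=1$ and $k=0$. It rests on the identity $\varphi\bigl(U_3(T,z_0,2)\cap C^2_0(T)\bigr)=U_3(T,z_0,1)\cap C^1_0(T)$ and its $C^1_0\to C^0_0$ analogue, whose non-obvious inclusion needs the complex-analytic fact recalled in Remark 3.13: the $\theta$-derivative of a function holomorphic on a half-disk which extends continuously across the bounding arc with $C^1$ boundary values again extends continuously across that arc. Everything else is a faithful transcription of the proofs of Theorems 3.11 and 3.12, with $g$ and the exterior of $T$ in place of $f$ and the interior.
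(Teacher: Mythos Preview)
Your argument is correct, but it is much longer than necessary: you essentially redo all of Theorems 3.11 and 3.12 for the exterior domain $\mathbb{C}\setminus\overline{D}$ in place of $D$, transporting the density result of [8] through $w\mapsto 1/w$ only at the very last step. The paper instead applies that same inversion once, at the top level. Define $\varphi:C^k(T)\to C^k(T)$ by $\varphi(u)(z)=u(1/z)$; this is a self-inverse topological isomorphism, and since $z\mapsto 1/z$ swaps the inside and the outside of $T$ and sends $z_0$ to $\overline{z_0}$, one has $\varphi(U_2(T,\overline{z_0},k))=U_3(T,z_0,k)$. The already-established fact that $U_2(T,\overline{z_0},k)$ is dense $G_\delta$ then gives the result in one line, with no need to reprove openness of the defining sets, redo the density argument, or repeat the descent to $k=0,1$.

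What your approach buys is a direct, self-contained verification that the whole machinery of Section~3 works symmetrically for the exterior; what the paper's approach buys is economy, since the inversion makes that symmetry manifest without any new work. A minor point in your write-up: when you transport [8] to produce $g_n$ vanishing at infinity, you should note (as in the proof of Theorem 3.12) that one may subtract the constant term to land in $A_0^\infty$ without destroying non-extendability.
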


\begin{proof}
Let $\varphi :C^\infty(T)\rightarrow C^\infty(T)$,$\varphi(f)=F$
where $F: T\rightarrow \mathbb{C}$,$F(z)=f(1/z)$ for $z\in T$.
Then obviously $\varphi$ is 1-1,onto $C^\infty(T)$,continuous and 
$\varphi={\varphi}^{-1}$. Also, $u\in U_2(T,\overline{z_0})$ if and only if $
\varphi(u)\in U_3(T,z_0)$, and thus $\varphi(U_2(T))=U_3(T)$. Since the class 
$U_2(T,z_0)$ is a dense and $G_\delta$ subset of $C^\infty(T)$, it follows from the 
above 
that $U_3(T,z_0)$ is a dense and $G_\delta$ subset of $C^\infty(T)$. At the same way
it can be proved that the class $U_3(T,z_0)$ is a dense and $G_\delta$ subset of 
$C^k(T)$.
\end{proof}

Since $U_4(T,z_0,k)=U_3(T,z_0,k)\cap U_2(T,z_0,k)$ and the classes $U_2(T,z_0,k)$ 
and $U_3(T,z_0,k)$ are dense and $G_\delta$ subsets of $C^k(T)$, it follows from 
Baire's theorem the next theorem:

\begin{theorem}
Let $z_0 \in T$, $k=0,1,2,...$ or $\infty$. The class $U_4(T,z_0,k)$ is a dense and
$G_\delta$ subset of $C^k(T)$.
\end{theorem}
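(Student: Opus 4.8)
The plan is to deduce Theorem 3.15 directly from the three preceding theorems together with Baire's theorem, so the argument is short. First I would record the set-theoretic identity
$$U_4(T,z_0,k)=U_2(T,z_0,k)\cap U_3(T,z_0,k),$$
which can be read off immediately from Definitions 3.7, 3.8 and 3.9: by Definition 3.9 a function $u\in C^k(T)$ belongs to $U_4(T,z_0,k)$ exactly when there is no admissible exterior datum $(\Omega(z_0,r),\lambda)$ \emph{and} no admissible interior datum $(P(z_0,r),h)$, and these two negations are precisely the conditions defining membership in $U_2(T,z_0,k)$ and in $U_3(T,z_0,k)$ respectively. Since the exterior domains $\Omega(z_0,r)=D(z_0,r)\cap\{|z|>1\}$ and the interior domains $P(z_0,r)=D(z_0,r)\cap D$ lie on opposite sides of $T$, the two families do not interact and the intersection identity holds verbatim.

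Next, by Theorem 3.11 (for $k=\infty$) and Theorem 3.12 (for $k=0,1,2,\dots$) the class $U_2(T,z_0,k)$ is a dense $G_\delta$ subset of $C^k(T)$, and by Theorem 3.14 the class $U_3(T,z_0,k)$ is also a dense $G_\delta$ subset of $C^k(T)$. Each space $C^k(T)$ with $k$ finite is a Banach space and $C^\infty(T)$ is a Frechet space, so in every case $C^k(T)$ is completely metrizable and hence a Baire space. A finite intersection of $G_\delta$ sets is again $G_\delta$, and by Baire's theorem a finite intersection of dense $G_\delta$ subsets of a Baire space is dense; applying this to $U_2(T,z_0,k)$ and $U_3(T,z_0,k)$ shows that $U_4(T,z_0,k)$ is a dense $G_\delta$ subset of $C^k(T)$, which is the assertion.

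There is, frankly, no genuine obstacle here: all the analytic content has already been supplied by Theorems 3.11, 3.12 and 3.14, and what remains is the observation that two-sided non-extendability at $z_0$ is the conjunction of the two one-sided non-extendability properties. If one wished to be scrupulous, the single point worth double-checking is that the decomposition $u=f|_T+g|_T$ exploited in the earlier theorems is the same whichever side one tests: the components $f\in A^\infty(D)$ and $g\in A_0^\infty((\mathbb{C}\setminus\overline D)\cup\{\infty\})$ are determined by $u$ independently of which side is under consideration, so no compatibility issue arises and the reduction to Baire's theorem is clean.
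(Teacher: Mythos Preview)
Your proposal is correct and matches the paper's own argument essentially verbatim: the paper states just before Theorem 3.15 that $U_4(T,z_0,k)=U_3(T,z_0,k)\cap U_2(T,z_0,k)$ and then invokes Baire's theorem using the dense $G_\delta$ results already established for $U_2$ and $U_3$. The extra remarks you add about the decomposition $u=f|_T+g|_T$ are not needed here, since that decomposition is used only in the proofs of the earlier theorems, not in this intersection step.
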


We will now define another class, which we will prove that shares the same 
properties with the above classes.

\begin{defi}
Let $z_0 \in T$, $k=0,1,2,...$ or $\infty$. A function $u\in C^k(T)$ belongs to 
the class $U_5(T,z_0,k)$ if 
there is no pair of disk $D(z_0,r),r>0$ and holomorphic function
$f:D(z_0,r)\rightarrow \mathbb{C}$ such that 
$f|_{D(z_0,r)\cap T}=u|_{D(z_0,r)\cap T}$.
\end{defi}

We will use the notation $U_5(T,k)$ for the class 
$\bigcap\limits_{z_0\in T} U_5(T,z_0,k)$.

We can easily see now that the class $U_5(T,z_0,k)$ 
contains $U_2(T,z_0,k)$, which is a dense and $G_\delta$ set.
Although this is a satisfying result, we will prove something even stronger.

\begin{theorem}
Let $z_0 \in T$, $k=0,1,2,...$ or $\infty$. Then class $U_5(T,z_0,k)$ 
is a dense and $G_\delta$ subset of $C^k(T)$.
\end{theorem}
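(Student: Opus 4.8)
The plan is to reduce the statement about $U_5(T,z_0,k)$ to the already-established fact that $U_2(T,z_0,k)$ and $U_3(T,z_0,k)$ are dense $G_\delta$ subsets of $C^k(T)$ (Theorems 3.12, 3.13, 3.15). The first observation is the set-theoretic inclusion
$$U_2(T,z_0,k)\cap U_3(T,z_0,k)=U_4(T,z_0,k)\subseteq U_5(T,z_0,k).$$
Indeed, if $u$ admits a holomorphic extension $f$ on some full disk $D(z_0,r)$ agreeing with $u$ on $D(z_0,r)\cap T$, then in particular $f$ restricted to $\Omega(z_0,r)=D(z_0,r)\cap\{|z|>1\}$ witnesses that $u\notin U_2(T,z_0,k)$ (and likewise $f$ on $P(z_0,r)=D(z_0,r)\cap D$ shows $u\notin U_3(T,z_0,k)$). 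Since $U_4(T,z_0,k)$ is a dense $G_\delta$ by Theorem 3.15, and any set containing a dense $G_\delta$ is itself dense, this already gives density of $U_5(T,z_0,k)$ for free; the remaining work is to show $U_5(T,z_0,k)$ is $G_\delta$.

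For the $G_\delta$ property I would first handle $k=\infty$ (and $k\ge 2$) and decompose $u=f|_T+g|_T$ with $f\in A^\infty(D)$ and $g\in A_0^\infty((\mathbb C\setminus\overline D)\cup\{\infty\})$, as set up just before Theorem 3.12. The key analytic point is that $u$ extends holomorphically across a full disk $D(z_0,r)$ if and only if \emph{both} one-sided extensions exist: the exterior part $g$ is already holomorphic on $\mathbb C\setminus\overline D$, so a holomorphic extension of $u$ on $D(z_0,r)$ forces $f=u-g$ (on the $|z|>1$ side of the disk, then by the removability/extension result quoted in Remark 3.10 across $T$) to extend holomorphically to all of $D(z_0,r)$; conversely, if $f$ extends holomorphically to $D(z_0,r)$ then $f+g$ is the desired extension of $u$. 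Hence
$$U_5(T,z_0,k)=\{u\in C^k(T):\ f\in U_1(D,z_0)\ \text{and}\ \tilde g\in U_1(\mathbb C\setminus\overline D,z_0)\},$$
where $\tilde g(z)=g(1/z)$ is the natural ``interior'' companion of $g$; but in fact once $g$ is holomorphic on the whole exterior, the second condition is automatic when the first holds for any $u$, so $U_5(T,z_0,k)$ coincides with $U_2(T,z_0,k)$ on the nose. Then the $G_\delta$ formula is literally the one used in Theorem 3.11: writing $d_{z_l}=|z_l-z_0|$ for a dense sequence $(z_l)$ in $D$,
$$U_5(T,z_0,k)=\bigcap_{l=1}^\infty\bigcap_{s=1}^\infty\bigcap_{n=1}^\infty\bigcup_{k'=n}^\infty\Bigl\{u\in C^k(T):\ \tfrac{1}{d_{z_l}}-\tfrac1s<\sqrt[k']{\tfrac{|f^{(k')}(z_l)|}{k'!}}\Bigr\},$$
and continuity of $u\mapsto f^{(k')}(z_l)$ (via $f(z)=\tfrac{1}{2\pi i}\int_T \tfrac{u(w)}{w-z}\,dw$, whose derivatives at an interior point depend continuously on $u$ in any $C^k(T)$ norm with $k\ge 0$) shows each inner set is open.

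Finally, for $k=0,1$ I would transfer the result downward exactly as in the proof of Theorem 3.13: use the integration operators $U_n(e^{it})=\int_0^t u_n(e^{i\theta})\,d\theta+c$ and the isomorphism $\varphi(u)=du/d\theta$ between $C^2_0(T)$ and $C^1_0(T)$ (open mapping theorem), together with the identity $\varphi\bigl(U_5(T,z_0,2)\cap C^2_0(T)\bigr)=U_5(T,z_0,1)\cap C^1_0(T)$, and then $F(u)=u-a_0$ to pass from $C^1_0(T)$ to $C^1(T)$ and from $C^1$ to $C^0$; the derivative-extension fact recorded in Remark 3.14 is what makes the identity for $\varphi$ correct. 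The main obstacle is the verification that a two-sided (full-disk) holomorphic extension is genuinely equivalent to the existence of the one-sided interior extension of $f$ — i.e. that nothing is lost by absorbing the globally-holomorphic exterior piece $g$ — which is where Remark 3.10's complex-analytic extension-across-an-arc result is essential; once that equivalence is nailed down, $U_5(T,z_0,k)=U_2(T,z_0,k)$ and the theorem is immediate from Theorem 3.12.
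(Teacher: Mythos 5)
Your density argument ($U_4(T,z_0,k)\subseteq U_5(T,z_0,k)$, indeed already $U_2\subseteq U_5$) is fine, and the overall plan (decompose $u=f|_T+g|_T$, reduce to radius-of-convergence conditions at interior points, then descend to $k=0,1$ by integration and the open mapping theorem) is the right one. The gap is in the $G_\delta$ part: the identity $U_5(T,z_0,k)=U_2(T,z_0,k)$ is false, and so is the step it rests on. From a full-disk extension of $u$ you correctly deduce that $f$ extends across $z_0$, but your converse --- ``if $f$ extends holomorphically to $D(z_0,r)$ then $f+g$ is the desired extension of $u$'' --- fails, because $g$ is holomorphic only on $\mathbb{C}\setminus\overline{D}$, so $f+g$ is not holomorphic on the interior part $D(z_0,r)\cap D$ of the disk. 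The correct statement is: $u\notin U_5(T,z_0,k)$ if and only if \emph{both} $f$ and $g$ extend holomorphically across $z_0$; equivalently $U_5=U_2\cup U_3$, i.e.\ $u\in U_5$ iff $f\in U_1(D,z_0)$ \emph{or} $g\in U_1(\mathbb{C}\setminus\overline{D},z_0)$. A concrete counterexample to your reduction: take $g\in A_0^\infty(\mathbb{C}\setminus\overline{D})$ holomorphic exactly on the exterior and $u=g|_T$; then $f\equiv 0$, so $u\notin U_2(T,z_0,k)$ for any $z_0$, yet $u\in U_5(T,z_0,k)$, since a holomorphic extension of $u$ on a disk would have to agree with $g$ on the exterior part (glue across the arc and apply the identity principle) and would thus extend $g$. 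Consequently your displayed countable intersection is exactly the formula for $U_2(T,z_0,\infty)$ from Theorem 3.11 and does not represent $U_5$; the claim that the condition on $g$ is ``automatic'' is unjustified.

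Note also that the repair is not simply ``$U_5=U_2\cup U_3$ and both are dense $G_\delta$'': a union of two $G_\delta$ sets need not be $G_\delta$, which is precisely why the theorem requires a separate argument. The paper works with the complement: $U_5(T,z_0,\infty)^{\mathsf{c}}$ is the set of $u$ for which both $f$ and $g$ extend across $z_0$, and, using Proposition 3.2 along sequences $z_l\to z_0$ in $D$ and $w_l\to z_0$ in $\mathbb{C}\setminus\overline{D}$, it is written as $\bigcup_{l}\bigcup_{s}\bigcup_{n}\bigcap_{k\ge n}E(l,s,k)$, where $E(l,s,k)$ imposes \emph{simultaneously} the two closed conditions $\frac{1}{d_{z_l}}-\frac{1}{s}\ge\sqrt[k]{|f^{(k)}(z_l)|/k!}$ and $\frac{1}{d_{w_l}}-\frac{1}{s}\ge\sqrt[k]{|g^{(k)}(w_l)|/k!}$; this exhibits the complement as an $F_\sigma$ set with empty interior, hence $U_5$ is a dense $G_\delta$. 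Your transfer to $k=0,1$ is in the paper's spirit and would go through once the $k=\infty$ (and $k\ge 2$) case is fixed along these lines.
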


\begin{proof}
We will first begin with $C^\infty(T)$. A function $u\in C^\infty(T)$,
$u=f_{|T}+g_{|T}$ belongs to the class $U_5(T,z_0,\infty)$ if and only if 
$f$ belongs to $U_1(D,z_0)$ and $g$ to $U_1(\mathbb{C}\setminus \overline{D},z_0)$.
From the proof of Proposition 3.2 we can see that there exists an open disk 
$D(z_0,r)$ such that, for every $z\in D(z_0,r)\cap D$ $f$ does not belong to the 
class $F(D,z,z_0)$ and for every 
$w\in D(z_0,r)\cap (\mathbb{C}\setminus \overline D)$ $g$ does not belong to the 
class $F(\mathbb{C}\setminus \overline{D},w,z_0)$.
Let $z_l\in D,l=1,2,3,...$,$w_l\in \mathbb{C}\setminus
\overline D,l=1,2,3,... $ be sequences converging to $z_0$.
Since both $z_l,w_l$ converge to $z_0$, there is an $m=1,2,3,...$ such that $f$ does 
not belong to $F(D,z_m,z_0)$ and $g$ does not belong to $F(\mathbb{C}\setminus 
\overline{D},w_m,z_0)$. 
Thus, $${U_5(T,z_0,\infty)}^\mathsf{c}=\bigcup\limits_{l=1}^\infty \bigcup
\limits_{s=1}^\infty \bigcup\limits_{n=1}^\infty 
\bigcap\limits_{k=n}^\infty E(l,s,k), $$
where $$E(l,s,k)=\left\{u\in C^\infty(T): \dfrac{1}
{d_{z_l}}-\dfrac{1}{s}\geq \sqrt[k]{\dfrac{|f^{(k)}(z_l)|}{k!}}, \dfrac{1}{d_{w_l}}-
\dfrac{1}{s}\geq\sqrt[k]{\dfrac{|g^{(k)}(w_l)|}{k!}}\right\}.$$
The method used at Theorem 3.11 proves that the class
${U_5(T,z_0,\infty)}^\mathsf{c}$ is $F_\sigma$ subset of $C^\infty(T)$ with empty 
interior. Equivalently the class $U_5(T,z_0)$ is a dense and $G_\delta$ subset of 
$C^\infty(T)$. Continuing as at Theorem 3.12, we can prove that the class
$U_5(T,z_0,k)$ has the desirable property. The proof is complete. 
\end{proof}

\begin{theorem}
Let $k=0,1,2,...$ or $\infty$. The classes $U_2(T,k),$ $U_3(T,k),$ $U_4(T,k)$ and
$U_5(T,k)$ are dense and $G_\delta$ subsets of $C^k(T)$.
\end{theorem}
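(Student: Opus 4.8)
The statement collects together the four "global" versions of results already established pointwise in $z_0 \in T$. The plan is to intersect over a countable dense set of points of $T$ and invoke Baire's theorem. First I would fix $k \in \{0,1,2,\dots\} \cup \{\infty\}$ and recall that by definition $U_j(T,k) = \bigcap_{z_0 \in T} U_j(T,z_0,k)$ for $j = 2,3,4,5$. The key observation is that each class is in fact an intersection over only countably many points: if $D \subset T$ is a countable dense subset, then $U_j(T,k) = \bigcap_{z_0 \in D} U_j(T,z_0,k)$. Indeed, if $u$ is holomorphically extendable (in the relevant one-sided or two-sided sense) across some disk $D(z_1,r)$ centred at a point $z_1 \in T$, then it is extendable across $D(z_0, r/2)$ for any $z_0 \in D(z_1,r/2) \cap T$, and such a $z_0$ can be chosen in $D$ by density; hence failure of membership at some point of $T$ forces failure at some point of $D$. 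This reduces the global classes to countable intersections.

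Next I would apply, for each $z_0 \in D$, the pointwise theorems already proved: Theorems 3.11 and 3.12 give that $U_2(T,z_0,k)$ is a dense $G_\delta$ subset of $C^k(T)$; Theorem 3.14 gives the same for $U_3(T,z_0,k)$; Theorem 3.15 (together with $U_4 = U_2 \cap U_3$) gives it for $U_4(T,z_0,k)$; and Theorem 3.18 gives it for $U_5(T,z_0,k)$. Since $C^k(T)$ is a Fréchet space (a Banach space for $k$ finite), Baire's theorem applies: a countable intersection of dense $G_\delta$ sets is again a dense $G_\delta$ set. Therefore each of $U_2(T,k) = \bigcap_{z_0 \in D} U_2(T,z_0,k)$, and similarly $U_3(T,k)$, $U_4(T,k)$, $U_5(T,k)$, is a dense $G_\delta$ subset of $C^k(T)$. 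This completes the proof.

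The only genuinely substantive point — and the step I would be most careful about — is the reduction to a countable dense subset $D$ of $T$, i.e. the claim that extendability across a disk centred somewhere on $T$ already manifests as extendability across a disk centred at a nearby rational point of $T$. For the two-sided class $U_5$ and for $U_2, U_3$ this is immediate from the geometry: a disk $D(z_1,r)$ contains $D(z_0,r - |z_1 - z_0|)$, and the restriction of the extending holomorphic function to the smaller disk still agrees with $u$ on the corresponding arc of $T$ and is holomorphic on the appropriate side; the same disk works for $U_4 = U_2 \cap U_3$. Once this is in place, everything else is a formal application of Baire's theorem to results already in hand, so no further obstacle arises.
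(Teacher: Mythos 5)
Your proposal is correct and follows essentially the same route as the paper: reduce each global class to a countable intersection $\bigcap_{z_0\in A}U_j(T,z_0,k)$ over a countable dense subset $A\subset T$, then combine the pointwise dense-$G_\delta$ results (Theorems 3.11--3.18) with Baire's theorem. Your explicit justification of the countable-reduction step (extendability across a disk centred at some $z_1\in T$ yields extendability across a smaller disk centred at a nearby point of $A$) is a detail the paper states without proof, and it is correct.
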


\begin{proof}
Let $A$ be a countable dense subset of $T$. Then it holda that
$U_2(T,k)=\bigcap\limits_{z_0\in A} U_2(T,z_0,k)$, 
$U_3(T,k)=\bigcap\limits_{z_0\in A} U_3(T,z_0,k)$, 
$U_4(T,k)=\bigcap\limits_{z_0\in A} U_4(T,z_0,k)$ and 
$U_5(T,k)=\bigcap\limits_{z_0\in A} U_5(T,z_0,k)$ 
and Baire's Theorem completes the proof.
\end{proof}

\section{Non extendability from any side of an analytic curve}

\subsubsection*{The closed arc}

Let $I=\left\{e^{it}:a\leq t \leq b 
\right\}$,$0\leq a<b\leq 2\pi$ be a compact arc of T.
A function $u:I\rightarrow \mathbb{C}$ belongs to the space $C^k(I),k=0,1,2,...$
or $\infty$, 
if it is continuous and k times continuously differentiable. The topology
of $C^k(I)$ is defined by the seminorms
$$\sup\limits_{\theta \in [a,b]}|\dfrac{d^lf(e^{i\theta})}
{d{\theta}^l}|,l=0,1,2...,k$$

We can prove from Borel's theorem 
[6] that every $f\in C^\infty (I)$ extends at $C^\infty (T)$.
Indeed, if $$p_n=\dfrac{1}{n!}\dfrac{d^{n}f(e^{ia})}
{d{\theta}^{n}},q_n=\dfrac{1}{n!}\dfrac{d^{n}f(e^{ib})}
{d{\theta}^{n}},n=0,1,2,...$$ we can find $g:[b,a+2\pi]\rightarrow \mathbb{C}$ which
is infinitely differentiable at $[b,a+2\pi]$ 
such that $g^{(n)}(b)= q_n n!,g^{(n)}(a+2\pi)=p_n n!,n=0,1,2,...$.
Then the function 
$$ F(e^{it}) = \left\{
\begin{array}{ c l }
f(e^{it}),   &    t\in [a,b]  \\
g(t),        &    t\in [b,a+2\pi]  \\
\end{array}
\right. $$ 
is well defined and belongs to $C^\infty(T)$.

\begin{defi}
Let $z_0 \in I$, $k=0,1,2,...$ or $\infty$. A function $u \in C^k(I)$ belongs to the 
class $U_2(I,z_0,k)$ if there is no pair of a domain $\Omega(z_0,r)=D(z_0,r)
\cap\left\{z\in\mathbb{C}:|z|> 1\right\},z_0\in I,r>0$ 
and a continuous function $\lambda:I\cup \Omega(z_0,r)\rightarrow 
\mathbb{C}$ which is also holomorphic at
$\Omega(z_0,r)$ and satisfies $\lambda|_{D(z_0,r)\cap I}=u|_{D(z_0,r)\cap I}.$ 
\end{defi} 

\begin{defi}
Let $z_0 \in I$, $k=0,1,2,...$ or $\infty$. A function $u \in C^k(I)$ belongs to the 
class $U_3(I,z_0,k)$ if there is no pair of a domain $P(z_0,r)=D(z_0,r)\cap D,r>0$ 
and a continuous function $h:I\cup P(z_0,r)\rightarrow 
\mathbb{C}$ which is also holomorphic at
$P(z_0,r)$ and satisfies $h|_{D(z_0,r)\cap I}=u|_{D(z_0,r)\cap I}.$ 
\end{defi} 

\begin{defi}
Let $z_0 \in I$, $k=0,1,2,...$ or $\infty$. A function $u \in C^k(I)$ belongs to the 
class $U_4(I,z_0,k)$ if there are neither a pair of a domain $\Omega(z_0,r)=D(z_0,r)
\cap\left\{z\in\mathbb{C}:|z|> 1\right\},r>0$ and a continuous 
function $\lambda:I\cup \Omega(z_0,r)\rightarrow \mathbb{C}$ which is also 
holomorphic at $\Omega(z_0,r)$ and satisfies 
$\lambda|_{D(z_0,r)\cap T}=u|_{D(z_0,r)\cap I}$ 
nor a pair of a domain $P(z_0,r)=D(z_0,r)\cap D,r>0$ 
and a continuous function $h:I\cup P(z_0,r)\rightarrow 
\mathbb{C}$ which is also holomorphic at
$P(z_0,r)$ and satisfies $h|_{D(z_0,r)\cap I}=u|_{D(z_0,r)\cap I}.$  
\end{defi}

\begin{defi}
Let $z_0 \in I$, $k=0,1,2,...$ or $\infty$. A function $u\in C^k(I)$ belongs to 
the class $U_5(I,z_0,k)$ if 
there is no pair of disk $D(z_0,r),r>0$ and holomorphic function
$f:D(z_0,r)\rightarrow \mathbb{C}$ such that 
$f|_{D(z_0,r)\cap I}=u|_{D(z_0,r)\cap I}$
\end{defi}

\begin{remark}
At the definitions 4.1, 4.2, 4.3, if $u\in C^\infty(I)$, automatically follows 
that every derivative of $\lambda$ and $h$  
extends continuously at the sets $(D(z_0,r)\cap I)\cup\Omega(z_0,r)$ and
$D(z_0,r)\cap I)\cup P(z_0,r)$ respectively. This follows easily using Remark 3.10.
\end{remark}

We will use the notation $U_2(I,k),U_3(I,k),U_4(I,k)$ and $U_5(I,k)$
for the classes $\bigcap\limits_{z_0\in I} U_2(I,z_0,k)$, 
 $\bigcap\limits_{z_0\in I} U_3(I,z_0,k)$,$\bigcap\limits_{z_0\in I}
U_4(I,z_0,k)$ and $\bigcap\limits_{z_0\in I} U_5(I,z_0,k)$.

\begin{theorem}
Let $z_0 \in I$, $k=0,1,2,...$ or $\infty$. The classes 
$U_2(I,z_0,k)$, $U_3(I,z_0,k)$, $U_4(I,z_0,k)$ and $U_5(I,z_0,k)$ are dense and
$G_\delta$ subsets of $C^k(I)$.
\end{theorem}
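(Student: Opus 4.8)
The strategy is to transfer the results already established for the full circle $T$ (Theorems 3.11, 3.12, 3.15, 3.16, 3.17) to the subarc $I$ by means of the Borel-extension map discussed just before Definition 4.1. Concretely, I would first upgrade that remark into a continuous linear right inverse. By Borel's theorem together with the cited simple version of Michael's selection theorem, there exists a continuous linear map $E \colon C^k(I) \to C^k(T)$ with $E(u)|_I = u$ for every $u$; in the $C^\infty$ case this is the statement that one can pick the germ $g$ on $[b, a+2\pi]$ depending continuously (indeed linearly) on the prescribed derivative data $(p_n), (q_n)$, which themselves depend continuously on $u \in C^\infty(I)$. For finite $k$ the construction is even more elementary (a fixed smooth bump-function patching), so $E$ is continuous, linear, and a section of the restriction map $\rho \colon C^k(T) \to C^k(I)$, $\rho(F) = F|_I$. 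Note $\rho$ is continuous, linear and surjective, hence — between Fréchet spaces — open.

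Next I would observe that extendability at a point $z_0 \in I$ is a purely local notion: whether $u \in C^k(I)$ extends holomorphically (from one side, from the other, or to a full disk) near $z_0$ depends only on the germ of $u$ at $z_0$, and for $z_0$ in the (relative) interior of $I$ the arc $I$ and the circle $T$ agree near $z_0$. Therefore, for $z_0$ in the interior of $I$, one has the clean identities $U_j(I, z_0, k) = \rho\bigl(U_j(T, z_0, k)\bigr) \cup \{\text{functions not of the relevant smoothness}\}$ — more usefully, $\rho^{-1}\bigl(U_j(I, z_0, k)\bigr) = U_j(T, z_0, k)$ and $U_j(I, z_0, k) = E^{-1}\bigl(U_j(T, z_0, k)\bigr)$ once one checks that $E(u)$ extends near $z_0$ iff $u$ does (the "if" being trivial since $E(u)|_I = u$; the "only if" because an extension of $E(u)$ restricts to an extension of $u$). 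Granting this, $U_j(I, z_0, k)$ is $G_\delta$ in $C^k(I)$ because $E$ is continuous and $U_j(T, z_0, k)$ is $G_\delta$ in $C^k(T)$ by Section 3; and it is dense because $\rho$ is an open surjection carrying the dense set $U_j(T, z_0, k)$ onto a dense subset of $C^k(I)$ that is contained in $U_j(I, z_0, k)$.

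The one point that genuinely needs care — and which I expect to be the main obstacle — is the two endpoints $e^{ia}, e^{ib}$ of $I$. At an endpoint the arc does not coincide with any neighbourhood on $T$, so the reduction above does not directly apply; a function defined only on $I$ can, in principle, extend holomorphically to a disk centred at $e^{ia}$ in ways unrelated to behaviour on the rest of $T$. I would handle this by noting there are only two such points, so it suffices to show each $U_j(I, z_0, k)$ with $z_0$ an endpoint is itself dense and $G_\delta$ and then intersect (Baire). For the $G_\delta$ part, the same $\limsup$-of-Taylor-coefficients description as in Proposition 3.4 and Theorem 3.11 works verbatim, using a sequence $z_n \in D$ (resp. in the exterior, resp. in a disk meeting $I$) clustering at the endpoint from inside the relevant region, together with the localized non-extendability criterion of Proposition 3.2; the Cauchy-transform splitting $u = f|_I + g|_I$ still makes sense after first applying $E$ to land in $C^\infty(T)$. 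For density at an endpoint one again perturbs $f$ within $A^\infty(D)$ (or the exterior analogue) by functions holomorphic exactly on the disk, exactly as in the proof of Theorem 3.11, and pushes forward through the continuous map "restrict $E(\cdot)$ to $I$" which here is just the identity on the $I$-data. Finally, $U_4(I,z_0,k) = U_2 \cap U_3$ and $U_5 \supseteq U_2$ on $I$ exactly as on $T$, so the four classes are handled simultaneously, and one invokes Baire's theorem in the Fréchet space $C^k(I)$ to finish.
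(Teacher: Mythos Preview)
Your approach coincides with the paper's: pull back the $G_\delta$ structure through a continuous section $E=\psi$ of the restriction map $\varphi:C^k(T)\to C^k(I)$ (built from Borel's theorem together with Michael's selection theorem) and push density forward through the continuous surjection $\varphi$, using exactly the identities $U_j(I,z_0,k)=\psi^{-1}(U_j(T,z_0,k))$ and $U_j(I,z_0,k)=\varphi(U_j(T,z_0,k))$. Two small remarks: the paper asserts these identities for \emph{every} $z_0\in I$ without singling out the endpoints, so your separate endpoint treatment is superfluous; and Michael's theorem, as invoked in the paper, yields only a \emph{continuous} section (not a linear one), though since your argument uses only continuity this does not affect its validity.
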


\begin{proof}
We will prove the theorem only for the class $U_2(I,z_0,\infty)$, but the 
same method can be used for the other cases too.
Let $\varphi: C^\infty(T)\rightarrow C^\infty(I)$ defined as
$\varphi(u)=u|_I$, $u\in C^\infty(T)$, then 
$\varphi$ is continuous, linear, onto $C^\infty(I)$ and 
$C^\infty(T)$,$C^\infty(I)$ are Frechet spaces.
Thus, from Michael's theorem [11],there exists a continuous function
$\psi:C^\infty(I)\rightarrow C^\infty(T)$ such that
$\varphi(\psi(u))=u$ for $u\in C^\infty(I)$.

It is easy to see that 
$$U_2(I,z_0,\infty)=\psi^{-1}(U_2(T,z_0,\infty))$$ and 
$$U_2(I,z_0,\infty)=\varphi(U_2(T,z_0,\infty)).$$
Since $U_2(T,z_0,\infty)$ is dense at $C^\infty(T)$
and $\varphi$ continuous and onto $C^\infty(I)$, $U_2(I,z_0,\infty)$ is 
dense at $C^\infty(I)$. Also, 
$$U_2(T,z_0,\infty)=\bigcap \limits_{l=1}^\infty G_n,$$ where $G_n,n=1,2,3,...$ 
are open subsets of $C^\infty(T)$, and
$$U_2(I,z_0,\infty)=\bigcap\limits_{n=1}^\infty \psi^{-1}(G_n)$$.

Since $G_n$ are open subsets of $C^\infty(T)$, from the continuity of $\psi$ we 
deduce that the sets $\psi^{-1}(G_n)$ are open in $C^\infty(I)$. We
conclude that $U_2(I,z_0,\infty)$ is a dense and $G_\delta$ subset of $C^\infty(I)$.
\end{proof}

Let $A$ be a countable dense subset of $I$. Combining the fact that the classes 
$U_2(I,z_0),U_3(I,z_0)), U_4(I,z_0)$ and $U_5(I,z_0)$ are dense and $G_\delta$ 
subsets 
of $C^\infty(I)$ with the fact that
$U_2(I,k)=\bigcap\limits_{z_0\in A} U_2(I,z_0,k)$, $U_3(I,k)=\bigcap\limits_{z_0\in 
A} U_3(I,z_0,k)$, $U_4(I,k)=\bigcap\limits_{z_0\in A} U_4(I,z_0,k)$ and
 $U_5(I,k)=\bigcap\limits_{z_0\in A} U_5(I,z_0,k)$ 
and Baire's theorem we get the next theorem :
 
\begin{theorem}
Let $k=0,1,2,...$ or $\infty$. The classes $U_2(I,k),U_3(I,k),U_4(I,k)$ and 
$U_5(I,k)$ are dense and $G_\delta$ subsets of $C^k(I)$.
\end{theorem}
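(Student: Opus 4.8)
The plan is to obtain this from Theorem 4.6 together with a simple localization observation and Baire's theorem, exactly as was done on the full circle in Theorem 3.18. Fix $j\in\{2,3,4,5\}$ and a countable dense subset $A\subseteq I$. By definition $U_j(I,k)=\bigcap_{z_0\in I}U_j(I,z_0,k)\subseteq\bigcap_{z_0\in A}U_j(I,z_0,k)$, so the only thing to prove is the reverse inclusion: if $u$ is extendable across $I$ (in the sense relevant to $U_j$) at \emph{some} point of $I$, then it is already extendable at some point of $A$.

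The key step is that for each of the four notions the set of $z_0\in I$ at which $u\notin U_j(I,z_0,k)$ is open in $I$. Suppose $u\notin U_5(I,z_0,k)$, witnessed by a disk $D(z_0,r)$ and a holomorphic $f:D(z_0,r)\to\mathbb{C}$ with $f=u$ on $D(z_0,r)\cap I$. For any $z_1\in I$ with $|z_1-z_0|<r$ set $r'=r-|z_1-z_0|>0$; then $D(z_1,r')\subseteq D(z_0,r)$, the restriction $f|_{D(z_1,r')}$ is holomorphic and agrees with $u$ on $D(z_1,r')\cap I$, so $u\notin U_5(I,z_1,k)$. The same nesting works verbatim for $U_2$ and $U_3$: $\Omega(z_1,r')=D(z_1,r')\cap\{|z|>1\}\subseteq\Omega(z_0,r)$ and $P(z_1,r')=D(z_1,r')\cap D\subseteq P(z_0,r)$, and the witnessing continuous functions $\lambda$ resp.\ $h$ simply restrict to $I\cup\Omega(z_1,r')$ resp.\ $I\cup P(z_1,r')$. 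For $U_4$ one uses $U_4(I,z_0,k)=U_2(I,z_0,k)\cap U_3(I,z_0,k)$, so its complement is a union of two open sets. Hence, if $u\notin U_j(I,k)$, the set of points of $I$ at which $u$ is extendable is a nonempty open subset of $I$ and therefore meets the dense set $A$; this yields $u\notin\bigcap_{z_0\in A}U_j(I,z_0,k)$, which is the desired inclusion. Thus $U_j(I,k)=\bigcap_{z_0\in A}U_j(I,z_0,k)$.

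Finally, $C^k(I)$ is a Fréchet space (a Banach space for finite $k$), hence a Baire space. By Theorem 4.6 each $U_j(I,z_0,k)$ with $z_0\in A$ is a dense $G_\delta$ subset of $C^k(I)$; a countable intersection of $G_\delta$ sets is $G_\delta$, and a countable intersection of dense $G_\delta$ sets in a Baire space is dense. Therefore $U_2(I,k),U_3(I,k),U_4(I,k),U_5(I,k)$ are all dense and $G_\delta$ in $C^k(I)$ (for $j=4$ one may alternatively note $U_4(I,k)=U_2(I,k)\cap U_3(I,k)$ and apply Baire once more). I do not expect any genuine obstacle here: the only point requiring care is the localization claim of the second paragraph, and that is immediate from the inclusion $D(z_1,r')\subseteq D(z_0,r)$ for $z_1$ close to $z_0$.
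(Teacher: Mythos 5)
Your proposal is correct and follows essentially the same route as the paper: intersect the classes $U_j(I,z_0,k)$ over a countable dense subset $A\subseteq I$, invoke the pointwise result (Theorem 4.6) and Baire's theorem in the Fr\'echet (or Banach) space $C^k(I)$. The only difference is that you explicitly verify the identity $U_j(I,k)=\bigcap_{z_0\in A}U_j(I,z_0,k)$ via the openness of the set of extendability points, a step the paper states without proof; your nesting argument $D(z_1,r')\subseteq D(z_0,r)$ is a correct justification of it.
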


\subsubsection*{The closed interval}

At next we will extend our results even more.
We consider the compact interval $L=[\gamma,\delta],\gamma<\delta$.
Let $k=0,1,2,...$ or $\infty$. The set of continuous functions $f:L\rightarrow 
\mathbb{C}$ which are k times differentiable is denoted by
$C^k(L)$. The topology of the space is defined by the seminorms
$$\sup\limits_{z\in L}{|f^{(l)}(z)|},l=0,1,2,...,k$$. In this way 
$C^k(L)$, $k=0,1,2,...$ becomes Banach space and $C^\infty(L)$ Frechet space and 
Baire's theorem is at our disposal.

\begin{defi}
Let $k=0,1,2,...$ or $\infty$. A function $u \in C^k(L)$ belongs to the class
$U_2(L,k)$ if there is no pair of a domain $\Omega(z_0,r)=D(z_0,r)
\cap\left\{z\in\mathbb{C}:Im(z)>0 \right\},z_0\in L,r>0$ 
and a continuous function $\lambda:L\cup \Omega(z_0,r)\rightarrow \mathbb{C}$ 
which is also holomorphic at
$\Omega(z_0,r)$ and satisfies $\lambda|_{D(z_0,r)\cap L}=u|_{D(z_0,r)\cap L}.$ 
\end{defi} 

\begin{defi}
Let $k=0,1,2,...$ or $\infty$. A function $u \in C^k(L)$ belongs to the class
$U_3(L,k)$ if there is no pair of a domain $P(z_0,r)=D(z_0,r)
\cap\left\{z\in\mathbb{C}:Im(z)<0 \right\},z_0\in L,r>0$ 
and a continuous function $h:L\cup P(z_0,r)\rightarrow \mathbb{C}$ 
which is also holomorphic at $P(z_0,r)$ and satisfies 
$h|_{D(z_0,r)\cap L}=u|_{D(z_0,r)\cap L}.$ 
\end{defi} 

\begin{defi}
Let $k=0,1,2,...$ or $\infty$. A function $u \in C^k(L)$ belongs to the class
$U_4(L,k)$ if there are neither a pair of a domain
$\Omega(z_0,r)=D(z_0,r)\cap\left\{z\in\mathbb{C}:Im(z)>0 \right\}$, $z_0\in I,r>0$ 
and a continuous function $\lambda:L\cup \Omega(z_0,r)\rightarrow 
\mathbb{C}$ which is also holomorphic at
$\Omega(z_0,r)$ and satisfies $\lambda_{/D(z_0,r)\cap L}
=u_{/D(z_0,r)\cap L}$ nor pair of a domain 
$P(z_0,r)=D(z_0,r)\cap \left\{z\in\mathbb{C}:Im(z)<0 \right\},r>0$ 
and a continuous function $h:L\cup P(z_0,r)\rightarrow \mathbb{C}$ 
which is also holomorphic at $P(z_0,r)$ and satisfies 
$h|_{D(z_0,r)\cap L}=u|_{D(z_0,r)\cap L}.$  
\end{defi}

\begin{defi}
Let $k=0,1,2,...$ or $\infty$. A function $u\in C^k(L)$ belongs to 
the class $U_5(L,k)$ if 
there is no pair of disk $D(z_0,r),z_0\in L,r>0$ and holomorphic function
$f: D(z_0,r)\rightarrow \mathbb{C}$ such that 
$f|_{D(z_0,r)\cap L}=u|_{D(z_0,r)\cap L}$
\end{defi}

\begin{remark}
At the definitions 4.8, 4.9, 4.10, if $u\in C^\infty(L)$, automatically follows 
that every derivative of $\lambda$ and $h$  
extends continuously at the sets $(D(z_0,r)\cap L)\cup\Omega(z_0,r)$ and
$D(z_0,r)\cap L)\cup P(z_0,r)$ respectively. This follows easily using Remark 3.10.
\end{remark}

\begin{theorem}
Let $k=0,1,2,...$ or $\infty$. The classes $U_2(L,k)$,$U_3(L,k)$,
$U_4(L,k)$ and $U_5(L,k)$ are dense and $G_\delta$ subsets of $C^k(L)$.
\end{theorem}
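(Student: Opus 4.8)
The plan is to reduce this to the arc case already settled in Theorem 4.6, by transporting everything from a compact subarc of $T$ to the segment $L$ through a conformal change of variable. First I would fix a M\"obius transformation $\Phi$ — for instance the Cayley transform $z\mapsto\frac{z-i}{z+i}$ — so that $\Phi$ maps $\mathbb{R}\cup\{\infty\}$ onto $T$, the upper half plane $\{\operatorname{Im}z>0\}$ onto $D$, and the lower half plane $\{\operatorname{Im}z<0\}$ onto $\mathbb{C}\setminus\overline{D}$. Then $I:=\Phi(L)$ is a compact proper subarc of $T$ (it misses $\Phi(\infty)$), and, since the single pole of $\Phi$ lies off $\mathbb{R}$, the map $\Phi$ is holomorphic, injective and with non-vanishing derivative on a full complex neighbourhood of $L$; in particular $\Phi|_L\colon L\to I$ is a real-analytic diffeomorphism, and so is its inverse.

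Next I would check that the composition operator $\Phi^{*}\colon C^{k}(I)\to C^{k}(L)$, $\Phi^{*}(v)=v\circ\Phi|_L$, is a linear homeomorphism (an isomorphism of Banach spaces when $k$ is finite, of Frechet spaces when $k=\infty$). This is routine: by the chain rule, the derivatives of $v\circ\Phi$ up to order $l$ are polynomial expressions in the derivatives of $v$ up to order $l$ with coefficients assembled from derivatives of $\Phi|_L$, all of which are bounded on the compact set $L$; hence each seminorm of $\Phi^{*}(v)$ is controlled by finitely many seminorms of $v$, so $\Phi^{*}$ is continuous, and the identical argument applied to $(\Phi|_L)^{-1}=\Phi^{-1}|_I$ shows $(\Phi^{*})^{-1}$ is continuous too.

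I would then verify that $\Phi^{*}$ carries the four generic classes over $I$ onto the corresponding classes over $L$: with the above normalization $\Phi$ takes the upper half plane onto the disk side and the lower half plane onto the exterior side of the circle, whence $\Phi^{*}(U_{3}(I,k))=U_{2}(L,k)$ and $\Phi^{*}(U_{2}(I,k))=U_{3}(L,k)$, while $\Phi^{*}(U_{4}(I,k))=U_{4}(L,k)$ and $\Phi^{*}(U_{5}(I,k))=U_{5}(L,k)$. The mechanism is the same in every case: writing $u=v\circ\Phi$, a continuous one-sided (or full-disk) holomorphic extension of $u$ near a point $w_{0}\in L$ is transported by $\Phi^{-1}$ into such an extension of $v$ near $\Phi(w_{0})\in I$, and conversely by $\Phi$. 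The one delicate point — which I expect to be the main obstacle — is that $\Phi$ does not send a half-disk $D(w_{0},r)\cap\{\operatorname{Im}z>0\}$ exactly onto a half-disk $D(\Phi(w_{0}),\rho)\cap D$. This is handled by noting that $\Phi$ is a biholomorphism near $w_{0}$ carrying the two local sides of $\mathbb{R}$ to the two local sides of $T$, so that $\Phi(D(w_{0},r))$ is an honest disk with $\Phi(w_{0})$ in its interior; hence for every admissible region on one side there is a smaller admissible region on the matching side contained in its $\Phi$-image, and vice versa, and since the existence of an extension is inherited by all admissible sub-regions, "extendable on some admissible set on a given side near $w_{0}$" is equivalent to the analogous statement near $\Phi(w_{0})$. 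Applying this on both sides simultaneously handles $U_{4}$, and the full-disk class $U_{5}$ is even simpler.

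Finally, Theorem 4.6 gives that $U_{2}(I,k)$, $U_{3}(I,k)$, $U_{4}(I,k)$ and $U_{5}(I,k)$ are dense and $G_{\delta}$ in $C^{k}(I)$; pushing these forward through the homeomorphism $\Phi^{*}$ and using the identifications of the previous step, we conclude that $U_{2}(L,k)$, $U_{3}(L,k)$, $U_{4}(L,k)$ and $U_{5}(L,k)$ are dense and $G_{\delta}$ in $C^{k}(L)$, as asserted. If one wishes to sidestep the orientation bookkeeping, one may combine the above with the reflection $z\mapsto\bar z$, which interchanges $U_{2}(L,k)$ with $U_{3}(L,k)$ and fixes $U_{4}(L,k)$ and $U_{5}(L,k)$.
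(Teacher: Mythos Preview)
Your proposal is correct and follows essentially the same route as the paper: both reduce to the arc case (Theorem~4.7) via a M\"obius transformation between a compact subarc of $T$ and the segment $L$, and then push the dense $G_\delta$ property through the induced composition homeomorphism on $C^k$. The paper simply writes down the explicit map $\mu(z)=(\delta-\gamma)\dfrac{i+1}{i-1}\dfrac{z-1}{z+1}+\gamma$ from the quarter arc $I_0$ onto $L$ and asserts $U_2(L,k)=\Pi^{-1}(U_2(I_0,k))$ without further comment; your version is a bit more explicit about the homeomorphism of function spaces, the $U_2/U_3$ swap caused by your particular choice of Cayley map, and the ``half-disk not mapped to half-disk'' issue, but the underlying argument is the same.
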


\begin{proof}
We consider the function $$\mu: I_0 \rightarrow L,
\mu(z)=(\delta-\gamma)\dfrac{i+1}{i-1}\dfrac{z-1}{z+1}+\gamma,$$ 
where $I_0=\left\{e^{it}: 0\leq t \leq \dfrac{\pi}{2} \right\}$, which is
continuous, 1-1 and onto $L$.
Thus the function $$\Pi:C^\infty(L) \rightarrow
C^\infty(I_0),\Pi(f)=fo\mu$$ 
is continuous, 1-1, onto $C^\infty(I_0)$ and has continuous inverse. Also,
$$U_2(L,k)={\Pi}^{-1}(U_2(I_0,k)),$$ which
combined with the nice properties of $\Pi$ and the fact that
$U_2(I_0)$ is a dense and $G_\delta$ subset of $C^\infty(I_0)$
indicates that $U_2(L,k)$ is a dense and $G_\delta$ subset of $C^k(L)$,
We can see using the above function that the same holds true for 
the classes $U_3(L,k)$, $U_3(L,k)$, $U_4(L,k)$ and $U_5(L,k)$ .
\end{proof}

\begin{remark}
The fact that the class $U_5(L,\infty)$ is a dense and $G_\delta$ subset of
$C^\infty(L)$ strengthens the result [2].
\end{remark}

\subsubsection*{The line}

Let $k=0,1,2,...$ or $\infty$. At this point we will consider the continuous 
functions $f:\mathbb{R}\rightarrow \mathbb{C}$ which are k times continuously
differentiable. This space of functions will be
denoted by $C^k(\mathbb{R})$
and becomes Frechet space with the 
topology induced by the seminorms
$$\sup\limits_{z\in [-n,n]}{|f^{(l)}(z)|},l=0,1,2,...,k, n=1,2,3,..$$ 
We can see that every function $f\in C^\infty([\gamma,\delta])$ 
extends at $C^\infty(\mathbb{R})$. Indeed, the use of 
Borel's theorem help us find
functions $$f_1\in C^\infty([\gamma-1,\gamma]),
f_2\in C^\infty([\delta,\delta+1])$$ such that 
$${f_1}^{(n)}(\gamma-1)= 0,{f_1}^{(n)}(\gamma)=
f^{(n)}(\gamma),{f_2}^{(n)}(\delta)=
f^{(n)}(\delta),{f_2}^{(n)}(\delta +1)= 0,
n=0,1,2,...$$ 
We can extend now $f$ as $$ \tilde{f}(z) = \left\{
\begin{array}{ c l }
f(z),     &    z\in [\gamma,\delta]   \\
f_1(z),   &    z\in [\gamma-1,\gamma]  \\
f_2(z),   &    z\in [\delta,\delta+1]  \\
0,        &    z\in (-\infty,\gamma]\cup[\delta,+\infty)
\end{array}\right. $$
Then the extension is well defined and belongs to
$C^\infty(\mathbb{R})$. Also, in this case both the extension
and every derivative of the extension are bounded and converge to $0$ as $z$ 
converges to $\pm \infty$.

\begin{defi}
Let $k=0,1,2,...$ or $\infty$. A function $u \in C^k(\mathbb{R})$ belongs to the 
class $U_2(\mathbb{R},k)$ if there is no pair of a domain $\Omega(z_0,r)=D(z_0,r)
\cap\left\{z\in\mathbb{C}:Im(z)>0\right\},z_0\in \mathbb{R},r>0$ 
and a continuous function 
$\lambda:\mathbb{R}\cup \Omega(z_0,r)\rightarrow \mathbb{C}$ which is also 
holomorphic at $\Omega(z_0,r)$ and satisfies 
$\lambda|_{D(z_0,r)\cap \mathbb{R}}=u|_{D(z_0,r)\cap \mathbb{R}}.$ 
\end{defi} 

\begin{defi}
Let $k=0,1,2,...$ or $\infty$. A function $u \in C^k(\mathbb{R})$ belongs to the 
class $U_3(\mathbb{R},k)$ if there is no pair of a domain $P(z_0,r)=D(z_0,r)
\cap \left\{z\in\mathbb{C}:Im(z)<0\right\},z_0\in \mathbb{R},r>0$ 
and a continuous 
function $h:\mathbb{R}\cup P(z_0,r)\rightarrow \mathbb{C}$
which is also holomorphic at
$P(z_0,r)$ and satisfies $h|_{D(z_0,r)\cap \mathbb{R}}
=u|_{D(z_0,r)\cap \mathbb{R}}.$ 

\end{defi} 

\begin{defi}
Let $k=0,1,2,...$ or $\infty$. A function $u \in C^k(\mathbb{R})$ belongs to the 
class $U_4(\mathbb{R},k)$ if there are neither a pair of a domain
$\Omega(z_0,r)=D(z_0,r)
\cap\left\{z\in\mathbb{C}:Im(z)>0\right\}$, $z_0\in \mathbb{R}$,$r>0$ 
and a continuous function $\lambda:L\cup \Omega(z_0,r)\rightarrow 
\mathbb{C}$ which is also holomorphic at
$\Omega(z_0,r)$ and satisfies $\lambda_{/D(z_0,r)\cap \mathbb{R}}
=u_{/D(z_0,r)\cap \mathbb{R}}$ nor pair of a domain 
$P(z_0,r)=D(z_0,r)\cap \left\{z\in\mathbb{C}:Im(z)<0\right\},r>0$ 
and a continuous 
function $h:\mathbb{R}\cup P(z_0,r)\rightarrow \mathbb{C}$ 
which is also holomorphic at
$P(z_0,r)$ and satisfies $h|_{D(z_0,r)\cap \mathbb{R}}
=u|_{D(z_0,r)\cap \mathbb{R}}.$  
\end{defi}

\begin{defi}
Let $k=0,1,2,...$ or $\infty$. A function $u\in C^k(\mathbb{R})$ belongs to 
the class $U_5(\mathbb{R},k)$ if 
there is no pair of disk $D(z_0,r),z_0\in \mathbb{R},r>0$ and 
holomorphic function
$f:D(z_0,r)\rightarrow \mathbb{C}$ such that 
$f|_{D(z_0,r)\cap \mathbb{R}}=u|_{D(z_0,r)\cap \mathbb{R}}$
\end{defi}

\begin{remark}
At the definitions 4.15, 4.16, 4.17, if $u\in C^\infty(\mathbb{R})$, automatically 
follows that every derivative of $\lambda$ and $h$  
extends continuously at the sets $(D(z_0,r)\cap \mathbb{R})\cup\Omega(z_0,r)$ and
$D(z_0,r)\cap \mathbb{R})\cup P(z_0,r)$ respectively. This follows easily from
Remark 3.10.
\end{remark}

\begin{theorem}
Let $k=0,1,2,...$ or $\infty$. The classes $U_2(\mathbb{R},k)$, $U_3(\mathbb{R},k)$,
$U_4(\mathbb{R},k)$ and $U_5(\mathbb{R},k)$ are dense and $G_\delta$ subsets of
$C^k(\mathbb{R})$.
\end{theorem}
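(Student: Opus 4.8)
The plan is to reduce the statement to the compact interval $L=[-N,N]$, for which the pointwise form of Theorem 4.12 is available, exploiting that $\mathbb{R}=\bigcup_{N\ge 1}[-N,N]$ and that holomorphic extendability at a boundary point is a purely local phenomenon. First I would observe that, exactly as in Theorems 3.18, 4.7 and 4.12, it suffices to prove that for every fixed $z_0\in\mathbb{R}$ and each $i\in\{2,3,5\}$ the pointwise class $U_i(\mathbb{R},z_0,k)$ — obtained from Definitions 4.15--4.18 by fixing a single $z_0$ rather than quantifying over all of $\mathbb{R}$ — is a dense $G_\delta$ subset of $C^k(\mathbb{R})$. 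Indeed, once this is known, for a countable dense set $A\subset\mathbb{R}$ one has $U_i(\mathbb{R},k)=\bigcap_{z_0\in A}U_i(\mathbb{R},z_0,k)$: the nontrivial inclusion is that if $u$ is extendable at some real $z_1$ with witnessing disk $D(z_1,r)$, then shrinking the disk shows $u$ is extendable at every real point of $D(z_1,r)$, hence at some point of $A$. Baire's theorem then applies, and $U_4$ is handled through $U_4(\mathbb{R},k)=U_2(\mathbb{R},k)\cap U_3(\mathbb{R},k)$ and one further application of Baire, just as $U_4(T,k)$ was obtained from $U_2(T,k)$ and $U_3(T,k)$.

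So fix $z_0$ and pick $N$ with $z_0\in(-N,N)$, and let $\rho_N:C^k(\mathbb{R})\to C^k([-N,N])$ be the restriction map, which is continuous, linear and, by Borel's theorem (applied as in the discussion preceding Definition 4.15), onto. The crux is the identity
$$U_i(\mathbb{R},z_0,k)=\rho_N^{-1}\big(U_i([-N,N],z_0,k)\big),$$
where $U_i([-N,N],z_0,k)$ denotes the pointwise interval class, defined as in Definitions 4.8--4.11 with $z_0$ fixed; this class is a dense $G_\delta$ subset of $C^k([-N,N])$ by Theorem 4.6 transported through the conformal map $\mu$ of the proof of Theorem 4.12. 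Granting the identity, the $G_\delta$ property of $U_i(\mathbb{R},z_0,k)$ is immediate from the continuity of $\rho_N$. For density, take $u\in C^k(\mathbb{R})$ and a basic neighborhood of it, involving finitely many seminorms on some $[-m,m]$; after enlarging $N$ we may assume $m\le N$ while keeping $z_0\in(-N,N)$. Choosing $w\in U_i([-N,N],z_0,k)$ close to $\rho_N(u)$ in $C^k([-N,N])$ and extending $w$ to some $\tilde w\in C^k(\mathbb{R})$ (possible by Borel's theorem, exactly as $C^\infty([\gamma,\delta])$ was extended to $C^\infty(\mathbb{R})$ before Definition 4.15, and elementary when $k$ is finite), we get $\tilde w\in\rho_N^{-1}(U_i([-N,N],z_0,k))=U_i(\mathbb{R},z_0,k)$; since $\tilde w$ agrees with $w$ on $[-m,m]$, it lies in the prescribed neighborhood, so $U_i(\mathbb{R},z_0,k)$ is dense.

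It remains to justify the displayed identity, which is the only step that is not a direct transcription of earlier arguments. That $u\in C^k(\mathbb{R})$ extendable at $z_0$ forces $u|_{[-N,N]}$ extendable at $z_0$ is immediate: the witnessing half-disk $\Omega(z_0,r)$ (or $P(z_0,r)$, or the full disk $D(z_0,r)$) together with its accompanying holomorphic function may be shrunk so that $D(z_0,r)\cap\mathbb{R}\subset(-N,N)$ and then restricted to $[-N,N]$. For the converse one takes a witness defined on $[-N,N]\cup\Omega(z_0,r)$ with $D(z_0,r)\cap\mathbb{R}\subset(-N,N)$ and glues it with $u$ along the whole of $\mathbb{R}$, the continuity of the resulting function on $\mathbb{R}\cup\Omega(z_0,r)$ being verified in the spirit of Remark 3.10. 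I expect exactly this gluing — not any topological subtlety — to be the main obstacle; the rest is a faithful repetition of what was done for $T$, the arc $I$ and the interval $L$.
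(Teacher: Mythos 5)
Your proof is correct, and it follows the same basic strategy as the paper -- reduce to the compact intervals $[-N,N]$, where Theorem 4.13 (via the arc and Theorem 4.6/4.7) is available, and finish with Baire -- but the organization and one key mechanism differ. The paper simply writes $U_2(\mathbb{R},\infty)=\bigcap_{N=1}^\infty\varphi_N^{-1}\bigl(U_2([-N,N],\infty)\bigr)$ for the restriction maps $\varphi_N$, gets the $G_\delta$ property from continuity, gets density of each preimage from the open mapping theorem ($\varphi_N$ is a continuous linear surjection of Fr\'echet spaces, hence open), and applies Baire over $N$; no pointwise classes on $\mathbb{R}$ or on $[-N,N]$ are ever introduced. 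You instead localize at the points of a countable dense set $A\subset\mathbb{R}$, identify each pointwise class $U_i(\mathbb{R},z_0,k)$ with the pullback of a pointwise interval class through a single $\rho_N$, and prove density by an explicit Borel-type extension of a good approximant rather than by openness of the restriction map. What your route buys is that it avoids the open mapping theorem (the extension lemma before Definition 4.15 serves as an explicit substitute) and makes fully visible the local-to-global verification (shrink the witnessing half-disk so that $D(z_0,r)\cap\mathbb{R}\subset(-N,N)$, then glue the witness with $u$ along $\mathbb{R}$); note that the paper's displayed identity silently requires exactly this same gluing, so your ``main obstacle'' is not an extra cost but an explicit rendering of a step the paper leaves implicit. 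The price is a little bookkeeping the paper does not need: you must know that the pointwise classes on $[-N,N]$ are dense and $G_\delta$, which is not stated in the paper but does follow, as you say, by transporting Theorem 4.6 through the map $\mu$ used in the proof of Theorem 4.13, and you need two applications of Baire (over $A$, and then for $U_4=U_2\cap U_3$) where the paper needs one per class.
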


\begin{proof}
We will prove the theorem only for the class $U_2(\mathbb{R},\infty)$ and the other
cases are similar. We consider the continuous linear maps
$$\varphi_{N}: C^\infty(\mathbb{R})\rightarrow 
C^\infty([-N,N]),N=1,2,3,...$$ defined as
$$\varphi_{N}(f)=f_{/[-N,N]},$$ which are also
onto $C^\infty([-N,N])$. Since $C^\infty(\mathbb{R}),
C^\infty([-N,N])$ are Frechet spaces, $\varphi_{N}$ are also 
open maps.

Then we notice that 
$$U_2(\mathbb{R},\infty)=\bigcap\limits_{N=1}^\infty
{\varphi_{N}}^{-1}(U_2([-N,N]),\infty).$$ 
Since the sets $U_2([-N,N])$ are dense subsets of $C^\infty([-N,N])$ and 
$\varphi_{N}$ are open maps, the sets ${\varphi_{N}}^{-1}(U_2([-N,N]))$ 
are dense subsets of $C^\infty(\mathbb{R})$.

Also, we have proved above that the classes
$U_2([-N,N],\infty)$ are $G_\delta$ subsets of $C^\infty([-N,N])$ respectively.
From the continuity of the functions $\varphi_N$, the classes
${\varphi_{N}}^{-1}(U_2([-N,N])$ are $G_\delta$ subsets of
$C^\infty(\mathbb{R})$.
Consequently, from Baire's theorem the class $U_2(\mathbb{R})$ is a dense and
$G\delta$ subset of $C^\infty(\mathbb{R})$. The proof is complete.
\end{proof}

The essential part of the proof of Theorem 4.20 is that the functions
$\varphi_N$ are continuous linear onto maps between 
Frechet spaces. Thus, we can define the space $C_b^k(\mathbb{R})$
as the space of functions $f\in C^k(\mathbb{R})$, where
$f^{(l)},l=0,1,2,...,k$ are bounded. $C_b^\infty(\mathbb{R})$
is a Frechet space.  
As we have proved, every function in $C^k([-n,n]),n=1,2,3,...$
can be extended at $C_b^k(\mathbb{R})$. It is not hard
to see that the functions $\varphi_N$, as defined at the proof of
Theorem 4.20 are continuous, linear, onto maps
between Frechet spaces
and thus open maps. Therefore, if we retain the same notation the corresponding
subsets of $C_b^k(\mathbb{R})$ of non-extendable functions, namely
$U_2(\mathbb{R},k),U_3(\mathbb{R},k),U_4(\mathbb{R},k)$ and 
$U_5(\mathbb{R},k)$ are
dense and $G_\delta$ subsets of 
$C_b^\infty(\mathbb{R},k)$.

Also, the subspace of $C_b^k(\mathbb{R})$,
denoted as $C_0^k(\mathbb{R})$, which contains the functions 
$f\in C^k(\mathbb{R})$, where
$\lim\limits_{z\rightarrow \pm \infty} f^{(l)}(z)=0,l=0,1,2,...,k$.
$C_0^k(\mathbb{R})$
is a Frechet space too. If we retain the same notation
the above procedure shows that the corresponding
subsets of $C_0^\infty(\mathbb{R})$ of non-extendable functions, namely
$$U_2(\mathbb{R},k),\ U_3(\mathbb{R},k),\ U_4(\mathbb{R},k)\ \text{and}\ 
U_5(\mathbb{R},k)$$ 
are dense and $G_\delta$ subsets of $C_0^k(\mathbb{R})$.

\subsubsection*{The open arc and the open interval}

We return to the cases of the arc $I=\left\{e^{it}:a<t<b \right\},
0\leq a<b \leq 2\pi$ and the interval
$L=(\gamma,\delta),\gamma<\delta$, with the only difference that now 
they are open. The space of continuous functions $f:I\rightarrow \mathbb{C}$ 
which are k times differentiable is denoted by
$C^k(T)$ and the space of continuous functions $g:L\rightarrow 
\mathbb{C}$ which are k times differentiable is denoted by
$C^k(L)$. Both spaces become Frechet 
spaces with the topology of uniform convergence on compact sets
of the first k derivatives. Since there is an homeomorphism between $I$ or $L$ and  
$\mathbb{R}$ which maps a neighborhood of $I$ or $L$ into a neighborhood of
$\mathbb{R}$, 
the corresponding subsets of $C^k(I)$ of non-extendable functions, namely
$$U_2(I,k),\ U_3(I,k),\ U_4(I,k)\ \text{and}\ U_5(I,k),$$ and 
the corresponding subsets of $C^k(L)$ of non-extendable functions,
$$U_2(L,k),\ U_3(L,k),\ U_4(L,k)\ \text{and}\ U_{5}(L,k),$$ are dense and $G_\delta$ 
subsets of $C^k(I)$ and $C^k(L)$ respectively.

Similarly to above we define the spaces $$C_b^\infty(I),\
C_b^\infty(L),\ C_0^\infty(I)\ 
\text{and}\ C_0^\infty(L),$$ which are Frechet spaces. 
Once again, if we retain the same notation, the corresponding
subsets of $C_b^k(I)$ of non-extendable functions, namely
$$U_2(I,k),\ U_3(I,k),\ U_4(I,k)\ \text{and}\ U_5(I,k)$$ are
dense and $G_\delta$ subsets of
$C_b^k(I)$, the corresponding
subsets of $C_b^\infty(L)$ of non-extendable functions, namely
$$U_2(L,k),\ U_3(L,k),\ U_4(L,k)\ \text{and}\ U_5(L,k)$$ are dense and $G_\delta$ 
subsets of $C_b^k(L)$, the corresponding
subsets of $C_b^\infty(I)$ of non-extendable functions, namely
$$U_2(I,k),\ U_3(I,k),\ U_4(I,k)\ \text{and}\ U_5(I,k)$$ are
dense and $G_\delta$ subsets of
$C_b^k(I)$ and the corresponding
subsets of $C_0^\infty(L)$ of non-extendable functions, namely
$$U_2(L,k),\ U_3(L,k),\ U_4(L,k)\ \text{and}\ U_5(L,k)$$ are dense and $G_\delta$ 
subsets of $C_b^k(L)$.

\subsubsection*{Jordan analytic curves}
According to [1], a Jordan curve $ \gamma $ is called analytic if $ \gamma(t)=\Phi(e^{it}) $ for $ t \in \mathbb{R} $ where $ \Phi : D(0,r,R)\rightarrow \mathbb{C} $ is injective and holomorphic on $ D(0,r,R)=\{ z \in \mathbb{C}: r< |z|<R \} $ with $ 0<r<1<R $.
\\
Let $ \Omega $ be the interior of a curve $ \delta $ which is Jordan analytic curve. Let $ \varphi:D\rightarrow \Omega $ be a Riemann function. Then $ \varphi $ has a conformal extension in an open neighbourhood of $ \overline{D} $, $ W $,   and $ \sup\limits_{|z|\leq1} |\varphi^{(l)}(z)|< +\infty$, $ \sup\limits_{|z|\leq1} |(\varphi^{-1})^{(l)}(z)|< +\infty$ for every $ l=0,1,2 \dots $. Let $ \gamma:[0,2\pi]\rightarrow \mathbb{C} $ be a curve with $ \gamma(t)=\varphi(e^{it}) $ for $ t \in [0, 2\pi] $. We extend $ \gamma $ to a $ 2\pi $-periodic function. At the following, when we use the symbols $ \Omega, \gamma, \varphi, W $ , they will have the same meaning as mentioned above, unless something else is denoted.

\begin{defi}
Let $ u : \partial \Omega \rightarrow \mathbb{C}$ be a continuous function. Also, let $ \omega : T\rightarrow \mathbb {C}$ be the function $ \omega=uo \varphi $. Then, for $ k=0,1,2, \dots, +\infty $ we have that $ u \in C^{k}(\partial \Omega) $ if and only if $ w \in C^{k}(T)$ and the function $ h:C^{k}(\partial \Omega)\rightarrow C^{k}(T) $ with $ h(u)=uo\varphi $ is an isomorphism. We denote that the topology of $ C^{k}(\partial \Omega) $ is defined by the seminorms 
$$ \sup\limits_{\vartheta\in \mathbb{R}} \left| \frac{d^{l}(u\circ\varphi)}{d\vartheta^{l}} (e^{i\vartheta}) \right|, l=0,1,2,3 \dots, k $$
\\
\end{defi}
Due to the isomorphism $ h $ we can easily generalize some results of the previous paragraphs. 

\begin{defi}
For $ k=0,1,2, \dots, +\infty $, a function $u \in C^{k}(\gamma^{*})$ belongs to the class
$U_5(\gamma^{*},k)$ if there is no pair of a $D(z_0,r), z_0\in \gamma^{*}$
and a continuous 
function $\lambda:\gamma^{*}\cup D(z_0,r)\rightarrow 
\mathbb{C}$ which is also holomorphic at
$D(z_0,r)$ and satisfies $\lambda_{/D(z_0,r)\cap \gamma^{*}}
=u_{/D(z_0,r)\cap \gamma^{*}}.$ We will simply write $U_5(\gamma^{*})$ for $U_5(\gamma^{*},\infty)$
\end{defi}

\begin{theorem}
For $ k=0,1,2, \dots, +\infty $, the set of the functions $ f \in C^{k}(\partial \Omega) $ which do not belong  $U_5(\gamma^{*})$ is a dense and $ G_\delta $ subset of $C^{k}(\partial \Omega)$.
\end{theorem}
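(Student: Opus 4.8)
The plan is to transport everything to the unit circle through the conformal parametrization $\varphi$ and then quote Theorem 3.18. (We read the statement as asserting that the class $U_5(\gamma^{*},k)$ of functions that are nowhere holomorphically extendable across $\partial\Omega=\gamma^{*}$ is a dense $G_\delta$ subset of $C^{k}(\partial\Omega)$; that is the content the rest of the paper proves in all the analogous settings.) Recall from Definition 4.21 that $h\colon C^{k}(\partial\Omega)\to C^{k}(T)$, $h(u)=u\circ\varphi$, is a topological isomorphism (of Banach spaces when $k$ is finite, of Fr\'echet spaces when $k=\infty$). Writing $\omega=h(u)=u\circ\varphi$, the whole proof reduces to the identity
$$h\bigl(U_5(\gamma^{*},k)\bigr)=U_5(T,k),$$
i.e. $u\in U_5(\gamma^{*},k)$ if and only if $\omega\in U_5(T,k)$. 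Granting this, the theorem follows at once: by Theorem 3.18 the class $U_5(T,k)$ is dense and $G_\delta$ in $C^{k}(T)$, and since $h$ is a homeomorphism, $U_5(\gamma^{*},k)=h^{-1}\bigl(U_5(T,k)\bigr)$ is dense and $G_\delta$ in $C^{k}(\partial\Omega)$.

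To establish the identity I would first fix the conformal dictionary supplied by the hypotheses on $\varphi$: $\varphi$ extends to a conformal bijection of the open neighbourhood $W\supset\overline{D}$ onto the open neighbourhood $\varphi(W)\supset\overline{\Omega}$, it maps $T$ onto $\gamma^{*}$ and $D\cap W$ onto $\Omega\cap\varphi(W)$, and $\varphi^{-1}$ is conformal on $\varphi(W)$. Fix $z_0\in\gamma^{*}$ and put $\zeta_0=\varphi^{-1}(z_0)\in T$. If $u$ is holomorphically extendable across $\gamma^{*}$ at $z_0$, there are $r>0$ and a holomorphic $F$ on $D(z_0,r)$ agreeing with $u$ on $D(z_0,r)\cap\gamma^{*}$; after shrinking $r$ so that $D(z_0,r)\subset\varphi(W)$, the function $F\circ\varphi$ is holomorphic on the open set $\varphi^{-1}(D(z_0,r))\ni\zeta_0$, which therefore contains some disc $D(\zeta_0,\rho)$, and on $D(\zeta_0,\rho)\cap T$ it equals $u\circ\varphi=\omega$; hence $\omega$ is holomorphically extendable across $T$ at $\zeta_0$. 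Conversely, a holomorphic extension of $\omega$ across $T$ at $\zeta_0$ on some disc, composed with $\varphi^{-1}$ and restricted to a small enough disc around $z_0$, yields a holomorphic extension of $u$ across $\gamma^{*}$ at $z_0$. Since $z_0\mapsto\varphi^{-1}(z_0)$ is a bijection of $\gamma^{*}$ onto $T$, taking the intersection over all boundary points gives $h(U_5(\gamma^{*},k))=U_5(T,k)$. The continuity requirement in Definition 4.22 for the extension, together with the automatic continuity of all its derivatives when $u\in C^{\infty}$ (Remark 3.10), are transported unchanged by composition with the conformal maps $\varphi^{\pm1}$, exactly as in the proof of Theorem 4.6.

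I expect the only genuine subtlety to be the matching of discs across $\varphi$: a disc $D(z_0,r)$ need not be contained in $\varphi(W)$, and its image or preimage under $\varphi^{\pm1}$ need not be a disc, so one must use the openness of $\varphi$ and of $\varphi^{-1}$ to shuttle between "extendable to a disc" and "extendable to an open neighbourhood of the boundary point," which are equivalent as far as $U_5$ is concerned. This is precisely the bookkeeping already carried out when passing from $C^{k}(T)$ to $C^{k}(I)$ and from $C^{k}(I)$ to $C^{k}(\partial\Omega)$ earlier, so no new phenomenon appears; the verification is routine once the conformal dictionary is written down. Everything else — that $h$ is an isomorphism, and that homeomorphisms pull dense $G_\delta$ sets back to dense $G_\delta$ sets — I would simply cite.
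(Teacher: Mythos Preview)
Your proposal is correct and follows essentially the same approach as the paper: both transport the problem to the unit circle via the isomorphism $h(u)=u\circ\varphi$, verify the identity $h(U_5(\gamma^{*},k))=U_5(T,k)$ by pushing holomorphic extensions through $\varphi$ and $\varphi^{-1}$, and then invoke Theorem 3.18. The paper's argument is phrased in terms of complements and uses open sets $V\subset\varphi(W)$ rather than discs from the outset, but this is exactly the disc-to-open-set shuttle you anticipated, so there is no substantive difference.
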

\begin{proof}
We will prove that for $ k=+\infty $ and the other cases can be proven in the same way. We know that $ h:C^{\infty}(\partial \Omega)\rightarrow C^{\infty}(T) $ with $ h(u)=uo\varphi $ is an isomorphism. We will prove that $ h(U_5(\gamma^{*}))=U_5(T) $ or equivalently $ h(U_5(\gamma^{*})^c)=U_5(T)^c $. If $ f\in U_5(\gamma^{*})^c $, then there exist $ \gamma(t_0) $ , an open set $ V $ with $\gamma(t_0)\in V \subseteq \varphi(W) $ and a holomorphic function $ F:V \rightarrow \mathbb {C} $ such that $ F|_{V \cap \gamma^{*}}=f|_{V \cap \gamma^{*}} $. Therefore, $ Fo\varphi:\varphi^{-1}(V) \rightarrow \mathbb {C} $ is holomorphic and $ Fo\varphi|_{\varphi^{-1}(V) \cap T}=h(f)|_{\varphi^{-1}(V) \cap T}=fo\varphi|_{\varphi^{-1}(V) \cap T} $. As a result,  $ f\in U_5(T)^c $ and $ h(U_5(\gamma^{*}))\subseteq U_5(T) $. If $ f\in U_5(T)^c $, then there exist $ e^{it_0} $ , an open set $ V $ with $e^{it_0}\in V \subseteq W $ and a holomorphic function $ F:V \rightarrow \mathbb {C} $ such that $ F|_{V \cap T}=f|_{V \cap T} $. Thus, $ Fo\varphi^{-1}:\varphi(V) \rightarrow \mathbb {C} $ is holomorphic and $ Fo\varphi^{-1}|_{\varphi(V) \cap \gamma^{*}}=fo\varphi^{-1}|_{\varphi(V) \cap \gamma^{*}} $, $ h(fo\varphi^{-1}|_\gamma^{*})=f $. Therefore,  $ f\in h(U_5(\gamma^{*})^c) $ and $ h(U_5(\gamma^{*})\supseteq U_5(T) $. From theorem 3.18 we have that $ U_5(T) $ is a dense and $ G_\delta $ subset of $C^{\infty}(T)$ and therefore  $ U_5(\gamma^{*}) $ is a dense and $ G_\delta $ subset of $C^{\infty}(\partial \Omega)$.
\end{proof}

At this point we give some definitions which are generalizations of the definitions of the previous paragraph. 

\begin{defi}
For $ k=0,1,2, \dots, +\infty $, a function $u \in C^{k}(\gamma^{*})$ belongs to the class
$U_2(\gamma^{*},k)$ if there is no pair of a domain $\Omega(z_0,r)=D(z_0,r)
\cap B$, where $ B $ is the non-bounded connected component of $ \mathbb {C}\setminus \gamma^{*} $, $z_0\in \gamma^{*},r>0$ 
and a continuous 
function $\lambda:\gamma^{*}\cup \Omega(z_0,r)\rightarrow 
\mathbb{C}$ which is also holomorphic at
$\Omega(z_0,r)$ and satisfies $\lambda|_{D(z_0,r)\cap \gamma^{*}}
=u|_{D(z_0,r)\cap \gamma^{*}}.$ 
\end{defi}

\begin{defi}
For $ k=0,1,2, \dots, +\infty $, a function $u \in C^{k}(\gamma^{*})$ belongs to the class
$U_3(\gamma^{*},k)$ if there is no pair of a domain $P(z_0,r)=D(z_0,r)\cap 
A$, where $ A $ is the bounded connected component of $ \mathbb {C}\setminus \gamma^{*} $, $z_0\in \gamma^{*},r>0$ 
and a continuous 
function $h:\gamma^{*}\cup P(z_0,r)\rightarrow 
\mathbb{C}$ which is also holomorphic at
$P(z_0,r)\cup \gamma^{*} $ and satisfies $h_{/D(z_0,r)\cap \gamma^{*}}
=u_{/D(z_0,r)\cap \gamma^{*}}.$ 
\end{defi}

\begin{defi}
For $ k=0,1,2, \dots, +\infty $, a function $u \in C^{k}(\gamma^{*})$ belongs to the class
$U_4(\gamma^{*},k)$ if there are neither a pair of a domain $
\Omega(z_0,r)=D(z_0,r)
\cap B$, where $ B $ is the non-bounded connected component of $ \mathbb {C}\setminus \gamma^{*} $,$z_0\in \gamma^{*},r>0$ 
and a continuous 
function $\lambda:\gamma^{*}\cup \Omega(z_0,r)\rightarrow 
\mathbb{C}$ which is also holomorphic at
$\Omega(z_0,r)$ and satisfies $\lambda_{D(z_0,r)\cap \gamma^{*}}
=u_{/D(z_0,r)\cap \gamma^{*}}$ nor a pair of a domain $P(z_0,r)=D(z_0,r)\cap 
A$,where $ A $ is the bounded connected component of $ \mathbb {C}\setminus \gamma^{*} $,$r>0$ 
and a continuous 
function $h:\gamma^{*}\cup P(z_0,r)\rightarrow 
\mathbb{C}$ which is also holomorphic at
$P(z_0,r)\cup \gamma^{*} $ and satisfies $h_{/D(z_0,r)\cap \gamma^{*}}
=u_{/D(z_0,r)\cap \gamma^{*}}.$  
\end{defi}

Now, we have the generalization of the theorems 3.18. 

\begin{theorem}
For $ k=0,1,2, \dots, +\infty $, the classes $ U_2(\gamma^{*},k),U_3(\gamma^{*},k),U_4(\gamma^{*},k) $ are dense and $ G_{\delta} $ subsets of $ C^{k}(\gamma^{*}) $.
\end{theorem}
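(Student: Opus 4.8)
The plan is to reduce the statement to the circle case (Theorem 3.18) via the isomorphism $h:C^{k}(\partial\Omega)\to C^{k}(T)$, $h(u)=u\circ\varphi$, which is available for every $k$. Precisely, I would prove that $h$ maps each of $U_2(\gamma^{*},k)$, $U_3(\gamma^{*},k)$, $U_4(\gamma^{*},k)$ onto $U_2(T,k)$, $U_3(T,k)$, $U_4(T,k)$ respectively; since $h$ is a linear homeomorphism between Frechet spaces it carries dense $G_{\delta}$ sets to dense $G_{\delta}$ sets, and the conclusion follows immediately from Theorem 3.18. As in the previous theorems it suffices to argue for $k=+\infty$, the finite cases being identical because $h$ is an isomorphism for each of them.

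The first ingredient I would isolate is that $\varphi$ matches the two sides of $T$ with the two complementary components of $\gamma^{*}$. Indeed, $\varphi$ is injective on $W$ with $\varphi(\overline{D})=\overline{\Omega}$, so it maps $W\cap D$ into $\Omega=A$, the bounded component of $\mathbb{C}\setminus\gamma^{*}$, and $W\setminus\overline{D}$ into $\mathbb{C}\setminus\overline{\Omega}=B$, the unbounded component; since $\varphi$ is moreover a bijection of $W$ onto the open neighbourhood $\varphi(W)$ of $\gamma^{*}$, one gets $\varphi(V)\cap A=\varphi(V\cap D)$ and $\varphi(V)\cap B=\varphi(V\setminus\overline{D})$ for every open $V\subseteq W$, and the symmetric identities for $\varphi^{-1}$ on open subsets of $\varphi(W)$.

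The core step is then the transfer of the non-extendability conditions. Suppose $u\in C^{\infty}(\gamma^{*})$ does not belong to $U_2(\gamma^{*},\infty)$, witnessed by $z_0=\gamma(t_0)$, some $r>0$, and a continuous $\lambda$ on $\gamma^{*}\cup(D(z_0,r)\cap B)$ which is holomorphic on $D(z_0,r)\cap B$ and equals $u$ on $D(z_0,r)\cap\gamma^{*}$; after shrinking $r$ I may assume $D(z_0,r)\subseteq\varphi(W)$. Setting $V=\varphi^{-1}(D(z_0,r))$, an open neighbourhood of $e^{it_0}$ in $W$ with $\varphi(V)=D(z_0,r)$, and choosing $r'>0$ with $D(e^{it_0},r')\subseteq V$ and $D(e^{it_0},r')\cap\{|z|>1\}$ connected, the function $\lambda\circ\varphi$ is continuous on $T\cup(V\setminus\overline{D})$, holomorphic on $V\setminus\overline{D}$, and agrees with $h(u)=u\circ\varphi$ on $D(e^{it_0},r')\cap T$; its restriction to $T\cup(D(e^{it_0},r')\cap\{|z|>1\})$ witnesses $h(u)\notin U_2(T,\infty)$. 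The reverse implication is the mirror argument, using $\varphi^{-1}$ in place of $\varphi$ after a preliminary shrinking so that the chosen disk around $e^{it_0}$ lies inside $W$. Hence $h(U_2(\gamma^{*},\infty))=U_2(T,\infty)$; running the same argument with $D$ and $A$ replacing $\mathbb{C}\setminus\overline{D}$ and $B$ gives $h(U_3(\gamma^{*},\infty))=U_3(T,\infty)$, and intersecting the two identities gives $h(U_4(\gamma^{*},\infty))=U_4(T,\infty)$.

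Granting these three identities the proof closes: by Theorem 3.18 the classes $U_2(T,\infty)$, $U_3(T,\infty)$, $U_4(T,\infty)$ are dense and $G_{\delta}$ in $C^{\infty}(T)$, so their inverse images under $h$, namely $U_2(\gamma^{*},\infty)$, $U_3(\gamma^{*},\infty)$, $U_4(\gamma^{*},\infty)$, are dense and $G_{\delta}$ in $C^{\infty}(\partial\Omega)=C^{\infty}(\gamma^{*})$, and the same holds for each finite $k$. I expect the one point needing care to be this transfer step, keeping track that under $\varphi$ the ``outside'' of $T$ corresponds to the unbounded component $B$ and the ``inside'' to $A$, and verifying that the successive shrinkings of radii (to remain inside $\varphi(W)$, to keep the truncated half-disks connected domains, to nest one disk inside the image of another) are all legitimate; everything else is routine bookkeeping around Theorem 3.18.
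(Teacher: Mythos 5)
Your proposal is correct and follows essentially the same route as the paper: the paper's proof of this theorem simply invokes the isomorphism $h(u)=u\circ\varphi$ and transfers the classes $U_2, U_3, U_4$ from the curve to the circle exactly as in the proof of Theorem 4.23 for $U_5$, then applies Theorem 3.18. Your added care about matching the exterior $\{|z|>1\}$ with the unbounded component $B$ and the interior $D$ with the bounded component $A$, and about shrinking radii to stay inside $\varphi(W)$, is precisely the bookkeeping the paper leaves implicit.
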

\begin{proof} 
Using the isomorphism $ h:C^{k}(\partial \Omega)\rightarrow C^{k}(T) $ with $ h(u)=uo\varphi $, the theorem can be proven in the same way with the theorem 4.23.
\end{proof}

\subsubsection*{Analytic curves}
We will now extend the above results to more general curves.

\begin{defi}
We will say that a simple curve $ \gamma:[a,b]\rightarrow \mathbb{C} $ is analytic if there is an open set $ [a,b]\subseteq V\subseteq \mathbb{C} $ and a holomorphic and injective function $ F:V\rightarrow \mathbb{C} $ such that $ F|_{[a,b]}=\gamma|_{[a,b]} $. 
 
\end{defi}

By analytic continuity,it is obvious that for an analytic simple curve and for specific $ V $ as above, there is unique $ F $ as above. For the following definitions and propositions $ \gamma $ will be an analytic curve and $ V, [a,b] $ as in the definitions above. 
\\

Let $k=0,1,2,...$ or $\infty$. The set of continuous functions $f:\gamma^{*}\rightarrow 
\mathbb{C}$ which are k times differentiable is denoted by
$C^k(\gamma^{*})$. The topology of the space is defined by the seminorms
$$\sup\limits_{t\in [a,b]}{|(fo\gamma)^{(l)}(t)|},l=0,1,2,...,k$$. In this way 
$C^k(\gamma^{*})$, $k=0,1,2,...$ becomes Banach space and $C^\infty(\gamma^{*})$ Frechet space and 
Baire's theorem is at our disposal.

\begin{defi}
For a curve $ \gamma $ as above, we define as $ K(\gamma,+) $  the open set $ F(\left\{z\in\mathbb{C}:Im(z)>0 \right\} \cap V) $ and as $ K(\gamma,-) $  the open set $ F(\left\{z\in\mathbb{C}:Im(z)<0 \right\} \cap V) $
 
\end{defi}

We can imagine the above two sets as two different sides of a neighbourhood of $\gamma$.

\begin{defi}
Let $k=0,1,2,...$ or $\infty$. A function $u \in C^k(\mathbb {\gamma^{*}})$ belongs to the class
$U_2(\gamma^{*},k)$ if there is no pair of a domain $ W $ with $ W \cap \gamma^{*} \neq  \emptyset$ and a continuous function $\lambda:\gamma^{*}\cup \left(W\cap K(\gamma,+) \right) \rightarrow \mathbb{C}$ 
which is also holomorphic at
$W\cap K(\gamma,+)$ and satisfies $\lambda|_{W \cap \gamma^{*}}=u|_{W \cap \gamma^{*}}.$ 
\end{defi} 

\begin{defi}
Let $k=0,1,2,...$ or $\infty$. A function $u \in C^k(\mathbb {\gamma^{*}})$ belongs to the class
$U_3(\gamma^{*},k)$ if there is no pair of a domain $ W $ with $ W \cap\gamma^{*}\neq\emptyset $ and a continuous function $\lambda:\gamma^{*}\cup \left(W\cap K(\gamma,-) \right) \rightarrow \mathbb{C}$ 
which is also holomorphic at
$W\cap K(\gamma,-)$ and satisfies $\lambda|_{W \cap \gamma^{*}}=u|_{W \cap \gamma^{*}}.$
\end{defi} 

\begin{defi}
Let $k=0,1,2,...$ or $\infty$.  A function $u \in C^k(\mathbb {\gamma^{*}})$ belongs to the class
$U_4(\gamma^{*},k)$ if there is neither pair of a domain $ W $ with $ W \cap \gamma^{*} \neq  \emptyset$ and a continuous function $\lambda:\gamma^{*}\cup \left(W\cap K(\gamma,+) \right) \rightarrow \mathbb{C}$ 
which is also holomorphic at
$W\cap K(\gamma,+)$ and satisfies $\lambda|_{W \cap \gamma^{*}}=u|_{W \cap \gamma^{*}}.$ nor pair of a domain $ W $ with $ W \cap\gamma^{*}\neq\emptyset $ and a continuous function $\lambda:\gamma^{*}\cup \left(W\cap K(\gamma,-) \right) \rightarrow \mathbb{C}$ 
which is also holomorphic at
$W\cap K(\gamma,-)$ and satisfies $\lambda|_{W \cap \gamma^{*}}=u|_{W \cap \gamma^{*}}.$  
\end{defi}

\begin{defi}
Let $k=0,1,2,...$ or $\infty$. A function $u\in C^k(\gamma^{*})$ belongs to 
the class $U_5(\gamma^{*},k)$ if 
there is no pair of disk $D(z_0,r),z_0\in \gamma^{*},r>0$ and holomorphic function
$f: D(z_0,r)\rightarrow \mathbb{C}$ such that 
$f|_{D(z_0,r)\cap \gamma^{*}}=u|_{D(z_0,r)\cap \gamma^{*}}$
\end{defi}

\begin{remark}
We observe that the above definitions do not depend on the choice of $ V $ at definition 4.28.
\end{remark}

\begin{theorem}
Let $k=0,1,2,...$ or $\infty$. The classes $U_2(\gamma^{*},k)$,$U_3(\gamma^{*},k)$,
$U_4(\gamma^{*},k)$ and $U_5(\gamma^{*},k)$ are dense and $G_\delta$ subsets of $C^k(\gamma^{*})$.
\end{theorem}

\begin{proof}
The function $ f: C^k(\gamma^{*})\rightarrow C^k([a,b]) $ with $ f(u)=uo\gamma $ is a topological isomorphism. Using the function $ F $ of definition 4.28, as in theorem 4.23, we can prove that $ f\left(U_5(\gamma^{*},k)\right)=U_5([a,b],k) $. Thus, $U_5(\gamma^{*},k)$ is dense and $G_\delta$ subset of $C^k(\gamma^{*})$, because $U_5([a,b],k) $ is dense and $G_\delta$ subset of $C^k([a,b])$ according to the theorem 4.13. \\
In the same way we prove this result for the classes $U_2(\gamma^{*},k)$,$U_3(\gamma^{*},k)$,
$U_4(\gamma^{*},k)$.
\end{proof}

\subsubsection*{Locally analytic curve defined on open interval}
Now we extend the results to some curves which are defined on open intervals.

\begin{defi}
We will say that a simple curve $ \gamma:(a,b)\rightarrow \mathbb{C} $ is locally analytic if for every $ t_0 \in  (a,b)$ there exists an open set $ t_0\in V\subseteq \mathbb{C} $ and a holomorphic and injective function $ F:V\rightarrow \mathbb{C} $ such that $ F|_{(a,b)}=\gamma|_{(a,b)} $. 
 
\end{defi}

For the following definitions and propositions $ \gamma $ will be a locally analytic curve and $ V, (a,b) $ as in the  above definition. 
\\

Let $k=0,1,2,...$ or $\infty$. The set of continuous functions $f:\gamma^{*}\rightarrow 
\mathbb{C}$ which are k times differentiable is denoted by
$C^k(\gamma^{*})$. The topology of the space is defined by the seminorms
$$\sup\limits_{t\in [a+\frac{1}{n},b-\frac{1}{n}]}{|(fo\gamma)^{(l)}(t)|},l=0,1,2,...,k, n=1,2,3, \dots$$. In this way 
$C^k(\gamma^{*})$, $k=0,1,2,...$ becomes Banach space and $C^\infty(\gamma^{*})$ Frechet space and 
Baire's theorem is at our disposal.

\begin{defi}
Let $k=0,1,2,...$ or $\infty$. A function $u \in C^k(\mathbb {\gamma^{*}})$ belongs to the class
$U_2(\gamma^{*},k)$ if there is no $ n\geq 1 $ such that $ u|_{\delta_n^{*}} \in U_2(\delta_n^{*},k) $, where $ \delta_n=\gamma|_{[a+\frac{1}{n},b-\frac{1}{n}]} $.
\end{defi} 

\begin{defi}
Let $k=0,1,2,...$ or $\infty$. A function $u \in C^k(\mathbb {\gamma^{*}})$ belongs to the class
$U_3(\gamma^{*},k)$ if there is no $ n\geq 1 $ such that $ u|_{\delta_n^{*}} \in U_3(\delta_n^{*},k) $, where $ \delta_n=\gamma|_{[a+\frac{1}{n},b-\frac{1}{n}]} $.
\end{defi} 

\begin{defi}
Let $k=0,1,2,...$ or $\infty$. A function $u \in C^k(\mathbb {\gamma^{*}})$ belongs to the class
$U_4(\gamma^{*},k)$ if there is no $ n\geq 1 $ such that $ u|_{\delta_n^{*}} \in U_4(\delta_n^{*},k) $, where $ \delta_n=\gamma|_{[a+\frac{1}{n},b-\frac{1}{n}]} $.  
\end{defi}

\begin{defi}
Let $k=0,1,2,...$ or $\infty$. A function $u\in C^k(\gamma^{*})$ belongs to 
the class $U_5(\gamma^{*},k)$ if 
there is no pair of disk $D(z_0,r),z_0\in \gamma^{*},r>0$ and holomorphic function
$f: D(z_0,r)\rightarrow \mathbb{C}$ such that 
$f|_{D(z_0,r)\cap \gamma^{*}}=u|_{D(z_0,r)\cap \gamma^{*}}$
\end{defi}

\begin{theorem}
Let $k=0,1,2,...$ or $\infty$. The classes $U_2(\gamma^{*},k)$,$U_3(\gamma^{*},k)$,
$U_4(\gamma^{*},k)$ and $U_5(\gamma^{*},k)$ are dense and $G_\delta$ subsets of $C^k(\gamma^{*})$.
\end{theorem}

\begin{proof}
The function $ f: C^k(\gamma^{*})\rightarrow C^k(a,b)) $ with $ f(u)=uo\gamma $ is a topological isomorphism. We will prove that $ f\left(U_5(\gamma^{*},k)\right)=U_5((a,b),k) $. If $ u\in U_5(\gamma^{*},k) $ and $ f(u) \notin U_5((a,b),k) $,then there exists $ n $ such that $ f(u)|_{[a+\frac{1}{n},b-\frac{1}{n}]} \notin U_5([a+\frac{1}{n},b-\frac{1}{n}],k) $. Then,as we have seen at a previous proof, we have that $ u|_{\gamma([a+\frac{1}{n},b-\frac{1}{n}])} \notin U_5(\gamma([a+\frac{1}{n},b-\frac{1}{n}]),k) $ and therefore $ u\in U_5(\gamma^{*},k) $, which is a contradiction. So, $ f\left(U_5(\gamma^{*},k)\right)\subseteq U_5((a,b),k) $. Similarly, we prove that  $ f\left(U_5(\gamma^{*},k)\right)\supseteq U_5((a,b),k) $ and therefore, $ f\left(U_5(\gamma^{*},k)\right)= U_5((a,b),k) $ Thus, $U_5(\gamma^{*},k)$ is dense and $G_\delta$ subset of $C^k(\gamma^{*})$, because $U_5((a,b),k)$ is dense and $G_\delta$ subset of $C^k((a,b))$ according to the respective result of the paragraph about the open interval. \\
In the same way we prove this result for the classes $U_2(\gamma^{*},k)$,$U_3(\gamma^{*},k)$,
$U_4(\gamma^{*},k)$.
\end{proof}

\begin{remark}
In the previous results nothing essential changes if we cosinder $ a=-\infty $ and $ b=-\infty $.
\end{remark}

\section{Real analyticity as a rare phenomenon}
Now, we will associate the phenomenon of non-extendability with that of real analyticity. Using results of non-extendability, we will prove results for real analyticity and we will see an application of the inverse.
\begin{prop}
Let $f:T \rightarrow \mathbb{C}$ and $t_0 \in \bbR$. Then
the following are equivalent:
\begin{enumerate}[(i)]
 \item There exists a powerseries of real variable
 $$\sum\limits_{n=0}^\infty a_n(t-t_0)^n,a_n \in \mathbb{C}$$ which
 has strictly positive radius of convergence $r>0$ 
 and there exists a $0<\delta\leq r$ such that $$f(e^{it})=
 \sum\limits_{n=0}^\infty a_n(t-t_0)^n$$ for $t\in(t_0-\delta,t_0+
 \delta)$.
 \item There exists an open set 
  $V \subset \mathbb{C},e^{it_0}\in V$ and a holomorphic function  
  $F:V
  \rightarrow 
  \mathbb{C}$ such that $F_{/V\cap T}=f_{/V\cap T}$.
 \item There exists a powerseries of complex variable 
 $$\sum\limits_{n=0}^\infty b_n(z-e^{it_0})^n,b_n \in \mathbb{C}$$ w
 which has strictly positive radius of
 convergence $s>0$ and there exists $0<\epsilon\leq s$ such that
 $$f(e^{it})=
 \sum\limits_{n=0}^\infty b_n(e^{it}-e^{it_0})^n$$ for $t\in(t_0-   
 \epsilon,t_0+
 \epsilon)$.

\end{enumerate}  
\end{prop}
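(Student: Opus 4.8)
The plan is to establish the cycle of implications $(i)\Rightarrow(ii)\Rightarrow(iii)\Rightarrow(i)$. The whole argument rests on one elementary observation: the map $E(w)=e^{iw}$ is holomorphic with $E'(t_0)=ie^{it_0}\neq 0$, so by the holomorphic inverse function theorem it restricts to a biholomorphism from a neighbourhood $U\subseteq\mathbb{C}$ of $t_0$ onto a neighbourhood $V_0\subseteq\mathbb{C}$ of $e^{it_0}$; write $\eta=(E|_U)^{-1}$. Note that if $z\in V_0\cap T$ then $w=\eta(z)\in U$ satisfies $|e^{iw}|=|z|=1$, which forces $\operatorname{Im}(w)=0$; so $\eta$ carries $V_0\cap T$ into $\mathbb{R}$ and there simply inverts the parametrisation $t\mapsto e^{it}$. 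This gives an analytic dictionary between the angular parameter $t$ and the point $z=e^{it}$, and each of the three conditions is a statement on one side of this dictionary.

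$(i)\Rightarrow(ii)$. Let $g(w)=\sum_{n\geq0}a_n w^n$, holomorphic on $D(0,r)$. Shrink $U$ so that $U\subseteq D(t_0,\min\{r,\delta\})$, set $V=E(U)$, and define $F(z)=g(\eta(z)-t_0)$ for $z\in V$, which is holomorphic. If $z\in V\cap T$, then by the observation above $w=\eta(z)$ is a real number $t$ with $|t-t_0|<\delta$ and $e^{it}=z$, whence $F(z)=g(t-t_0)=\sum_{n\geq0}a_n(t-t_0)^n=f(e^{it})=f(z)$. Thus $F|_{V\cap T}=f|_{V\cap T}$, which is $(ii)$.

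$(ii)\Rightarrow(iii)$. Expand $F$ in its Taylor series $\sum_{n\geq0}b_n(z-e^{it_0})^n$ about $e^{it_0}$; its radius of convergence $s$ is at least the distance from $e^{it_0}$ to $\partial V$ (and $s>0$, with $s=+\infty$ interpreted appropriately if $V=\mathbb{C}$). By continuity of $t\mapsto e^{it}$ choose $\epsilon>0$ with $\epsilon\leq s$ such that $|e^{it}-e^{it_0}|<s$ whenever $|t-t_0|<\epsilon$; then $e^{it}\in D(e^{it_0},s)\subseteq V$ and $f(e^{it})=F(e^{it})=\sum_{n\geq0}b_n(e^{it}-e^{it_0})^n$ for all such $t$, which is $(iii)$.

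$(iii)\Rightarrow(i)$. The function $\psi(z)=\sum_{n\geq0}b_n(z-e^{it_0})^n$ is holomorphic on $D(e^{it_0},s)$, and $\lambda\mapsto e^{i\lambda}$ is entire, so $\lambda\mapsto\psi(e^{i\lambda})$ is holomorphic on the open set $\{\lambda\in\mathbb{C}:e^{i\lambda}\in D(e^{it_0},s)\}$, which contains a disc $D(t_0,\rho)$ with $\rho>0$. Its Taylor expansion there, $\sum_{n\geq0}a_n(\lambda-t_0)^n$, has radius of convergence $r\geq\rho$, and restricting to real $\lambda=t$ gives $f(e^{it})=\psi(e^{it})=\sum_{n\geq0}a_n(t-t_0)^n$ for $|t-t_0|<\min\{\epsilon,\rho\}$, which is $(i)$. (Equivalently, one substitutes the entire expansion $e^{it}-e^{it_0}=e^{it_0}\sum_{m\geq1}i^m(t-t_0)^m/m!$ into $\sum b_n w^n$ and invokes the standard fact that a composition of convergent power series is again convergent.) The argument is routine; the only points needing a little care are the bookkeeping of shrinking neighbourhoods so that $V\cap T$ is exactly a small arc on which $\eta$ inverts $t\mapsto e^{it}$ and so that the constraints $\delta\leq r$ and $\epsilon\leq s$ in the statement are met, and, in $(iii)\Rightarrow(i)$, the passage from a holomorphic function of the complex variable $\lambda$ to a genuine power series in the real variable $t$ — which is exactly what the analytic dictionary provided by $E$ makes transparent.
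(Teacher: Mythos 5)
Your proof is correct and takes essentially the same route as the paper's: both transfer between the real parameter and the circle via the local biholomorphism $w\mapsto e^{iw}$ near $t_0$ (the paper secures injectivity by simply taking $\delta<\pi$ where you invoke the holomorphic inverse function theorem, and your explicit remark that $\eta$ maps $V_0\cap T$ into $\mathbb{R}$ is the same fact the paper uses implicitly), and $(iii)\Rightarrow(i)$ is handled identically by composing the complex power series with the exponential and re-expanding about $t_0$. One cosmetic point in $(ii)\Rightarrow(iii)$: the inclusion $D(e^{it_0},s)\subseteq V$ may fail when $s>\operatorname{dist}(e^{it_0},\partial V)$, so the identity $F(z)=\sum_{n\geq 0}b_n(z-e^{it_0})^n$ should be invoked only on the disc of radius $\operatorname{dist}(e^{it_0},\partial V)$, which suffices since you only need $t$ close to $t_0$.
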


\begin{proof}
$(i)\Rightarrow(ii)$
We consider
$$g(z)=\sum\limits_{n=0}^\infty a_n(z-t_0)^n,$$
$z\in D(t_0,\delta)$,
which is well defined and holomorphic at $D(t_0,\delta)$. 
We can assume that $\delta<\pi$, so that the function 
$$\varphi:D(t_0,\delta)\rightarrow \mathbb{C},\varphi(z)=e^{iz}$$
is holomorphic and 1-1 and consequently there exists the inverse
function 
$$\varphi^{-1}: \varphi(D(t_0,\delta))\rightarrow \mathbb{C}$$ 
which is also holomorphic. We define as $$F=go\varphi^{-1}:
\varphi(D(t_0,\delta))\rightarrow \mathbb{C},$$ 
which is holomorphic function and $$F(z)=f(z),z\in \varphi(D(t_0,
\delta))
\cap T.$$
$(ii)\Rightarrow(iii)$
Obvious \newline
$(iii)\Rightarrow(i)$
Let $$G(z)=\sum\limits_{n=0}^\infty b_n(z-e^{it_0})^n,z\in 
D(e^{it_0},\epsilon).$$
The function $$\varphi: \mathbb{C}\rightarrow \mathbb{C},
\varphi(z)=e^{iz}$$ is holomorphic, and as a result the function
$$Go\varphi :\mathbb{C}\rightarrow \mathbb{C}$$ is holomorphic. 
Hence, there exists $\delta>0$ such that
$D(t_0,\delta)\subset \varphi^{-1}(D(e^{it_0},\epsilon))$ and
$a_n\in \mathbb{C}$,$n=0,1,2,...$ for which
$$(Go\varphi)(z)=\sum\limits_{n=0}^\infty a_n(z-t_0)^n,
z\in D(t_0,\delta)$$ and therefore 
$$f(e^{it})=G(e^{it})=\sum\limits_{n=0}^\infty a_n(t-t_0)^n,t
\in(t_0-\delta,t_0+\delta).$$
\end{proof} 

\begin{defi}
A function $ f:T\rightarrow \mathbb{C} $ which satisfies the property (i) of Proposition 5.1
is called real analytic at $e^{it_0}$.
\end{defi}

\begin{defi}
A function $ f:T\rightarrow \mathbb{C} $ which satisfies the property (ii) of Proposition 5.1
is called extendable at $e^{it_0}$.
\end{defi}

Proposition 5.1 can be also written as following:

\begin{prop}
A function $ f:T\rightarrow \mathbb{C} $ is extendable at $e^{it_0}$ if and only if it is real analytic at $e^{it_0}$.
\end{prop}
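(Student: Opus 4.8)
The plan is to observe that Proposition 5.5 is nothing more than a repackaging of the already-established equivalence in Proposition 5.1 together with the terminology introduced in Definitions 5.2 and 5.3. So the proof will be a short deduction rather than a new argument.

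First I would recall that, by Definition 5.3, saying that $f:T\rightarrow\mathbb{C}$ is extendable at $e^{it_0}$ means precisely that property (ii) of Proposition 5.1 holds for $f$ and $t_0$, namely that there exist an open set $V\subset\mathbb{C}$ with $e^{it_0}\in V$ and a holomorphic $F:V\rightarrow\mathbb{C}$ with $F_{/V\cap T}=f_{/V\cap T}$. Similarly, by Definition 5.2, saying that $f$ is real analytic at $e^{it_0}$ means precisely that property (i) of Proposition 5.1 holds, i.e.\ that there is a power series $\sum_{n=0}^\infty a_n(t-t_0)^n$ of positive radius of convergence $r>0$ and a $\delta\in(0,r]$ with $f(e^{it})=\sum_{n=0}^\infty a_n(t-t_0)^n$ for $t\in(t_0-\delta,t_0+\delta)$.

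Then I would simply invoke Proposition 5.1, which asserts that (i), (ii) and (iii) are equivalent; in particular (i)$\iff$(ii). Chaining the two translations above with this equivalence yields that $f$ is extendable at $e^{it_0}$ if and only if $f$ is real analytic at $e^{it_0}$, which is exactly the assertion of Proposition 5.5. Since $t_0\in\mathbb{R}$ (equivalently $e^{it_0}\in T$) was arbitrary, this completes the proof.

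There is no genuine obstacle here: the entire content has already been carried out in the proof of Proposition 5.1 (where the only slightly delicate steps were passing through the exponential map $\varphi(z)=e^{iz}$ and its local inverse on a disk of radius $<\pi$, and transporting the power-series expansions accordingly). The remaining work is purely a matter of matching the definitions to the labelled conditions, so the write-up can be kept to a couple of sentences.
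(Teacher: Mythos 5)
Your proposal is correct and matches the paper exactly: the paper gives no separate argument, introducing the statement with the phrase that Proposition 5.1 ``can be also written as following,'' i.e.\ it is precisely the equivalence (i)$\iff$(ii) of Proposition 5.1 rephrased through the definitions of real analyticity and extendability. (Only a trivial labelling slip: the statement is Proposition 5.4 in the paper's numbering, not 5.5.)
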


Now, we want to generalize the previous Proposition to more general curves, in order to associate the phenomenon of non-extendability with that of real analyticity to these curves. We will use this connection in order to prove results which show that real analyticity is a rare phenomenon. To prove this connection, we need to use the Proposition 5.1 to these curves. However, we will prove that this lemma is true only for Jordan curves which are analytic. 

\begin{lemma}
Let $ t_0 \in (0,1) $ and $ \gamma:[0,1]\rightarrow \mathbb{C} $ a Jordan curve for which the following holds:\\
For every $ I\subseteq \mathbb{R} $ open interval and for every function $ f: I \rightarrow \mathbb {C} $  where $ t_0 \in I $ the followings are equivalent:\\
1) There  is a power series of real variable
 $$\sum\limits_{n=0}^\infty a_n(t-t_0)^n,a_n \in \mathbb{C}$$  
 with a positive radius of convergence $r>0$ and there is $0<\delta\leq r$ so that $$f(t)=
 \sum\limits_{n=0}^\infty a_n(t-t_0)^n$$ for $t\in(t_0-\delta,t_0+
 \delta)$.\\
2)  There  is a power series of complex variable 
 $$\sum\limits_{n=0}^\infty b_n(z- \gamma(t))^n,b_n \in \mathbb{C}$$ with a positive radius of convergence $s>0$ and $0<\epsilon\leq s$ so that 
 $$f(t)=
 \sum\limits_{n=0}^\infty b_n(\gamma(t)-\gamma(t_0))^n$$ for $t\in(t_0-   
 \epsilon,t_0+
 \epsilon)$.
 \\
 Then $ \gamma $ is locally analytic at $ t_0 $, meaning that there exists an
open set $ V $ in $ \mathbb{C} $ containing an interval $(t_0-\delta, t_0+\delta)\subseteq[0, 1]$ where $ \delta >0 $ and there exists a conformal mapping
$\varphi : V \rightarrow \mathbb {C}$ such that $\phi|_{(t_0-\delta, t_0+\delta)} = \gamma|_{(t_0-\delta, t_0+\delta)}$.
\end{lemma}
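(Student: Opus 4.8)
The plan is to feed the hypothesized equivalence two carefully chosen test functions and read off from them local analyticity of $\gamma$ together with the non-vanishing of its derivative at $t_0$. First I would apply the equivalence to $f=\gamma|_I$ itself, where $I=(t_0-\eta,t_0+\eta)\subseteq[0,1]$ for small $\eta>0$. This $f$ trivially satisfies condition 2): take $b_0=\gamma(t_0)$, $b_1=1$ and $b_n=0$ for $n\ge 2$, so that $\sum_n b_n(\gamma(t)-\gamma(t_0))^n=\gamma(t)=f(t)$, the series having infinite radius of convergence. Hence, by hypothesis, $f=\gamma$ satisfies condition 1): there are $a_n\in\mathbb{C}$ and $0<\delta_1\le r$ with $\gamma(t)=\sum_n a_n(t-t_0)^n$ for $t\in(t_0-\delta_1,t_0+\delta_1)$, where $r>0$ is the radius of convergence. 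Put $\varphi(z)=\sum_n a_n(z-t_0)^n$; this is holomorphic on $D(t_0,r)$, it satisfies $\varphi(t_0)=\gamma(t_0)$, and $\varphi$ agrees with $\gamma$ on the real interval $(t_0-\delta_1,t_0+\delta_1)$. Thus $\gamma$ is already extended holomorphically near $t_0$, and it remains only to check that this extension is locally injective, i.e. that $\varphi'(t_0)=a_1\neq 0$.

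To obtain $\varphi'(t_0)\neq 0$ I would apply the equivalence in the other direction to $f(t)=t-t_0$, which trivially satisfies condition 1) (a one-term power series). By hypothesis it then satisfies condition 2): there is a power series $G(w)=\sum_n b_n(w-\gamma(t_0))^n$ with radius $s>0$ and some $\epsilon>0$ such that $t-t_0=G(\gamma(t))$ for $t\in(t_0-\epsilon,t_0+\epsilon)$. Choosing $\rho>0$ small enough that $\varphi(D(t_0,\rho))\subseteq D(\gamma(t_0),s)$, the composition $G\circ\varphi$ is holomorphic on $D(t_0,\rho)$ and coincides with $z\mapsto z-t_0$ on the real interval of half-length $\min(\rho,\delta_1,\epsilon)$ about $t_0$, since there $\varphi=\gamma$. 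As $D(t_0,\rho)$ is connected and a nondegenerate real interval accumulates at its own points, the identity theorem gives $G(\varphi(z))=z-t_0$ throughout $D(t_0,\rho)$; differentiating at $z=t_0$ yields $G'(\gamma(t_0))\,\varphi'(t_0)=1$, so $\varphi'(t_0)\neq 0$.

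Finally, by the inverse function theorem for holomorphic maps, $\varphi'(t_0)\neq 0$ forces $\varphi$ to be injective on some open disk $V=D(t_0,\delta)$, which we may shrink so that $(t_0-\delta,t_0+\delta)\subseteq[0,1]$. Then $\varphi:V\to\mathbb{C}$ is conformal and $\varphi|_{(t_0-\delta,t_0+\delta)}=\gamma|_{(t_0-\delta,t_0+\delta)}$, which is exactly the asserted local analyticity of $\gamma$ at $t_0$. I expect the only genuine insight to be the choice of the two test functions $\gamma$ and $t\mapsto t-t_0$; everything after that is a routine combination of the identity theorem and the holomorphic inverse function theorem, the one minor technical point being to keep the relevant domains connected so the identity theorem applies.
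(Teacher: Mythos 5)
Your proof is correct and takes essentially the same route as the paper: both arguments feed the hypothesized equivalence exactly the two test functions $\gamma$ itself (via $2)\Rightarrow 1)$) and a translate of the identity map (via $1)\Rightarrow 2)$), and both conclude by local invertibility of a holomorphic map with nonvanishing derivative. The only cosmetic difference is that the paper produces the conformal parametrization as the local inverse $h=F^{-1}$ of the complex series $F$ satisfying $F(\gamma(t))=t$, obtaining $b_1\gamma'(t_0)=1$ by differentiating that real identity, whereas you use the complexification $\varphi$ of $\gamma$'s real power series directly and obtain $\varphi'(t_0)\neq 0$ through the identity theorem applied to $G\circ\varphi$.
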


\begin{proof}
We will use the implication $ 2)\Rightarrow 1) $ only to prove that $ \gamma $ is differentiable at an open interval which contains $ t_0 $. There exists $ \beta >0 $ so that $I=(t_0-\beta, t_0+\beta)\subseteq[0, 1]$. For every $ t \in I $ we choose $ f(t)=\gamma(t)= \gamma(t_0)+(\gamma(t)-\gamma(t_0)) $ and so by the $ 2)\Rightarrow 1) $ we have that  there exists $0< \delta < \beta$ so that $$\gamma(t)=
 \sum\limits_{n=0}^\infty a_n(t-t_0)^n$$   for some constants $a_n \in \mathbb{C}$ for every $t\in(t_0-\delta,t_0+
 \delta)$. Therefore $ \gamma $ is differentiable in this interval. Now, for $ g(t)=t=t_0+(t-t_0) $ by $ 1)\Rightarrow 2)$ we have that there exists  $0<\epsilon\leq \delta$ so that 
 $$t= \sum\limits_{n=0}^\infty b_n(\gamma(t)-\gamma(t_0))^n$$ for some constants $b_n \in \mathbb{C}$ for every $t\in(t_0- \epsilon,t_0+ \epsilon)$. We differentiate the above equation at $t=t_0 $ and we have that $ 1=b_{1}\gamma'(t_0) $. Therefore $b_{1}\neq 0$. Now, because $ \gamma $ is not constant, the power series  $\sum\limits_{n=0}^\infty b_n(z-\gamma(t_0))^n$ has a positive radius of convergence and so there exists $\alpha>0$ such that $ \gamma(t)\in D(\gamma(t_0),\alpha)$ for every $ t \in (t_0- \epsilon,t_0+ \epsilon) $ and the function $ f:D(\gamma(t_0),\alpha)\rightarrow \mathbb{C}$ with $$ f(z)=\sum\limits_{n=0}^\infty b_n(z-\gamma(t_0))^n $$ is a holomorphic one. Also, we have that  $ f(\gamma(t))=t $ for every $ t \in (t_0- \epsilon,t_0+ \epsilon) $. Because $ f'(\gamma(t_0))=b_1\neq 0 $ , $ f $ is locally invertible and let $ h=f^{-1} $ in an open disk $ D(t_0,\eta) $ where $ 0<\eta <\epsilon $. Then, $ \gamma(t)=h(t) $ for every $ t \in (t_0-\eta, t_0+\eta) $ and h is holomorphic and injection and the proof is complete.
\end{proof}

\begin{remark}
The above proof shows that if $\gamma $ in lemma 5.5 belongs to $ C^1([0,1]) $, then the conclusion of the lemma is true even if we suppose only that $ 1)\Rightarrow 2) $ is true. A simpler proof of the lemma which ,however, does not prove the version of the lemma described in the previous sentence,can also be made.
\end{remark}
 
 Now, if we have a Jordan curve which satisfies the assumptions of the previous lemma at each of its point, then from this lemma we conclude that the curve is locally analytic at each of its points. From [9] we conclude that the curve is analytic Jordan curve. So, Proposition 5.1 cannot be generalized for Jordan curves which are not analytic. Below we show that for Jordan curves which are analytic the generalization is true.
 
 \begin{lemma}
 Let $f:I \rightarrow \mathbb{C}$ be a function, where $ I $ is an open interval and $t_0 \in I $ and $ \gamma:[0,2\pi]\rightarrow \mathbb{C}$ an analytic Jordan curve. We extend $ \gamma $ to a $ 2\pi $-periodic function. Then the followings are equivalent: \\
 
 1) There exists a power series with real variable
 $$\sum\limits_{n=0}^\infty a_n(t-t_0)^n,a_n \in \mathbb{C}$$ with positive radius of convergence $r>0$ and there exists a $0<\delta\leq r$ such that $$f(t)=
 \sum\limits_{n=0}^\infty a_n(t-t_0)^n$$ for $t\in(t_0-\delta,t_0+
 \delta)$.\\

 2) There exists a power series with complex variable
 $$\sum\limits_{n=0}^\infty b_n(z-\gamma(t_0))^n,b_n \in \mathbb{C}$$ with positive radius of convergence $s>0$ and there exists $\epsilon>0$ such that 
 $$f(t)=
 \sum\limits_{n=0}^\infty b_n(\gamma(t)-\gamma(t_0))^n$$ for $t\in(t_0-   
 \epsilon,t_0+
 \epsilon)$.
\end{lemma}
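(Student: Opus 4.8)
The plan is to imitate the proof of Proposition 5.1, with the exponential map $\varphi(z)=e^{iz}$ used there replaced by a holomorphic extension of the parametrization $\gamma$ itself. Since $\gamma$ is an analytic Jordan curve, by the definition recalled above there is an injective holomorphic map $\Phi:D(0,r,R)\to\mathbb{C}$ with $0<r<1<R$ and $\gamma(t)=\Phi(e^{it})$ for all $t\in\mathbb{R}$. Set $\Psi(z)=\Phi(e^{iz})$. This is holomorphic on the open strip $S=\{z\in\mathbb{C}:r<|e^{iz}|<R\}$, which contains $\mathbb{R}$; it satisfies $\Psi|_{\mathbb{R}}=\gamma$; and $\Psi'(z)=ie^{iz}\Phi'(e^{iz})\neq 0$ on $S$, because an injective holomorphic function has nowhere vanishing derivative. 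Hence $\Psi$ is locally conformal at every real point, in particular at $t_0$. This map $\Psi$ is the only new ingredient; the rest is a transcription of the argument for Proposition 5.1.

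For $(1)\Rightarrow(2)$: given the real power series $\sum a_n(t-t_0)^n$ with radius of convergence $r>0$ and $f(t)=\sum a_n(t-t_0)^n$ on $(t_0-\delta,t_0+\delta)$, I would set $g(z)=\sum a_n(z-t_0)^n$, holomorphic on $D(t_0,\delta)$. Choosing $\delta'\in(0,\delta]$ small enough that $\Psi$ is injective on $D(t_0,\delta')$, the inverse $\Psi^{-1}$ is holomorphic on the open neighbourhood $V=\Psi(D(t_0,\delta'))$ of $\gamma(t_0)$, so $F:=g\circ\Psi^{-1}:V\to\mathbb{C}$ is holomorphic and $F(\gamma(t))=g(t)=f(t)$ for $|t-t_0|<\delta'$. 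Expanding $F$ in its Taylor series $\sum b_n(z-\gamma(t_0))^n$ about $\gamma(t_0)$, which has positive radius of convergence since $V$ is open, yields $f(t)=\sum b_n(\gamma(t)-\gamma(t_0))^n$ near $t_0$, which is $(2)$.

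For $(2)\Rightarrow(1)$: given $\sum b_n(z-\gamma(t_0))^n$ with radius of convergence $s>0$ and $f(t)=\sum b_n(\gamma(t)-\gamma(t_0))^n$ on $(t_0-\epsilon,t_0+\epsilon)$, let $G(z)=\sum b_n(z-\gamma(t_0))^n$, holomorphic on $D(\gamma(t_0),s)$. Since $\Psi$ is continuous with $\Psi(t_0)=\gamma(t_0)$, there is a disk $D(t_0,\delta)\subset S$ with $\Psi(D(t_0,\delta))\subset D(\gamma(t_0),s)$, so $G\circ\Psi$ is holomorphic on $D(t_0,\delta)$ and therefore equals a power series $\sum a_n(z-t_0)^n$ there, with positive radius of convergence. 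Restricting to real $t$ with $|t-t_0|$ small enough that also $|t-t_0|<\epsilon$, we get $f(t)=G(\gamma(t))=(G\circ\Psi)(t)=\sum a_n(t-t_0)^n$, which is $(1)$.

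The argument is essentially routine once $\Psi$ is available; the only place that needs care is the bookkeeping of nested neighbourhoods and radii of convergence, so that all compositions are defined and all power-series identities hold on one common interval around $t_0$. The genuinely essential hypothesis is analyticity of the Jordan curve: it is precisely what furnishes the conformal extension $\Psi$ of the parametrization, and Lemma 5.5 shows that the equivalence can fail without it.
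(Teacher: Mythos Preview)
Your proof is correct and follows essentially the same route as the paper's own argument: both introduce the holomorphic map $\Psi(z)=\Phi(e^{iz})$ (the paper calls it $\Gamma$ and restricts it at once to a small disk $D(t_0,\varepsilon)$), use its local invertibility near $t_0$, and then transport the power series through $\Psi$ and $\Psi^{-1}$ exactly as in Proposition~5.1. The only cosmetic difference is that the paper obtains injectivity of $\Gamma$ on a small disk directly from the injectivity of $\Phi$ and of $z\mapsto e^{iz}$, whereas you deduce local injectivity from $\Psi'\neq 0$; both justifications are fine.
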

\begin{proof} 
Because $\gamma$ is an analytic Jordan curve, there is a holomorphic and injective function $ \Phi :D(0,r,R)\rightarrow\mathbb{C} $ where $ 0<r<1<R $ and $ \gamma(t)=\Phi(e^{it}) $ for every $ t \in \mathbb{R} $. Thefore there is an open disk $ D(t_0,\varepsilon)\subseteq\mathbb{C} $, where $ \varepsilon>0 $ and a holomorphic and injective function $ \Gamma:D(t_0,\varepsilon)\rightarrow \mathbb{C} $ with $\Gamma(z)=\Phi(e^{iz})$.\\

$(i)\Rightarrow(ii)$
We consider the function 
$$g(z)=\sum\limits_{n=0}^\infty a_n(z-t_0)^n,$$
$z\in D(t_0,\delta)$,
which is well defined and holomorphic in $D(t_0,\delta)$.  We have that 
$$\Gamma^{-1}: \Gamma(D(t_0,\varepsilon))\rightarrow \mathbb{C}$$ is a holomorphic function. We consider the function $$F=go\Gamma^{-1}:
\Gamma(D(t_0,\varepsilon))\rightarrow \mathbb{C},$$ (where $\Gamma(D(t_0,\varepsilon))$ is an open set) which is a holomorphic function and $$(Fo\Gamma)(t)=f(t),t\in D(t_0,\varepsilon)
\cap I.$$ Therefore, there exist $b_n\in \mathbb{C}$,$n=1,2,3,...$ and $ \delta>0 $ such that 
$$ F(z)=\sum\limits_{n=0}^\infty b_n(z-\gamma(t_0))^n $$\\
for every  $z \in D(\gamma(t_0),\delta)\subseteq\Gamma(D(t_0,\varepsilon))$ and so $$ f(t)=(Fo\gamma)(t)=\sum\limits_{n=0}^\infty b_n(\gamma(t)-\gamma(t_0))^n $$
in an interval $ (t_0- s,t_0+s)$ where $ s>0 $.
\\

$(ii)\Rightarrow(i)$
We cosinder the function $$G(z)=\sum\limits_{n=0}^\infty b_n(z-\gamma(t_0))^n,$$ 
$z\in D(\gamma(t_0),s)$. We choose $ a>0 $ with $ a<\varepsilon $ such that $ \Gamma(D(t_0,a))\subseteq D(\gamma(t_0),s)$.
The function 
$$Go\Gamma :D(t_0,a)\rightarrow \mathbb{C}$$ is holomorphic. 
Therefore, there exist
$a_n\in \mathbb{C}$,$n=1,2,3,...$ such that
$$(Go\Gamma)(z)=\sum\limits_{n=0}^\infty a_n(z-t_0)^n,
z\in D(t_0,a)$$ and consequently  
$$f(t)=G(\gamma(t))=\sum\limits_{n=0}^\infty a_n(t-t_0)^n,t\in(t_0-
a,t_0+a).$$
\end{proof}

As we have seen before, if $ \delta:[0,2\pi]\rightarrow\mathbb {C} $ is an analytic Jordan curve, $ \Omega $ is the interior of  $ \delta $ and  $ \varphi:D\rightarrow \Omega $ is a Riemann function, then $ \varphi $ has a conformal extension in an open neighbourhood of $ \overline{D} $, $ W $,   and $ \sup\limits_{|z|\leq1} |\varphi^{(l)}(z)|< +\infty$, $ \sup\limits_{|z|\leq1} |(\varphi^{-1})^{(l)}(z)|< +\infty$ for every $ l=0,1,2 \dots $. Let $ \gamma:[0,2\pi]\rightarrow \mathbb{C} $ be a Jordan analytic curve with $ \gamma(t)=\varphi(e^{it}) $ for $ t \in [0, 2\pi] $. We extend $ \gamma $ to a $ 2\pi $-periodic function. At the following, when we use the symbols $ \Omega, \gamma, \varphi, W $ , they will have the same meaning as mentioned above, unless something else is denoted.

\begin{defi}
A function $ f:\gamma^{*}\rightarrow \mathbb{C} $ is real analytic at $ \gamma(t_0) $, if there exists a power series $ \sum\limits_{n=0}^\infty a_n(t-t_0)^n $ with a radius of convergence $ \delta>0 $ such that $ f(\gamma(t))=\sum\limits_{n=0}^\infty a_n(t-t_0)^n $  for every $ t \in(t_0-\delta,t_0+\delta) $.
\end{defi}

\begin{defi}
We will say that a function $ f:\gamma^{*}\rightarrow \mathbb{C} $ is extendable at $ \gamma(t_0) $ if there exists  an open set $ V $ with $ \gamma(t_0)\in V $ and a holomorphic function $ F:V\rightarrow \mathbb {C} $ such that $ F|_{V\cap \gamma^{*}}=f|_{V\cap \gamma^{*}} $. If a function $ f \in C^{\infty}(\gamma^{*}) $ is not extendable at any $ \gamma(t_0) $, then it belongs to $ U_5(\gamma^{*}, \infty) $ .
 
\end{defi}

The next proposition is , in fact, equivalent to the lemma 5.7.

\begin{prop}
A function $ f:\gamma^{*}\rightarrow \mathbb{C} $ is real analytic at $ \gamma(t_0) $ if and only if $ f $ is extendable at $ \gamma(t_0) $.
\end{prop}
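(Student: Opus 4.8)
The plan is to reduce Proposition 5.11 to Lemma 5.7 by transporting the two notions (real analyticity at $\gamma(t_0)$ and extendability at $\gamma(t_0)$) along the conformal parametrization $\varphi$ and the analytic curve $\gamma(t)=\varphi(e^{it})$, exactly as Lemma 5.7 already compares a real power series in $t$ with a complex power series in $\gamma(t)-\gamma(t_0)$. First I would observe that Definition 5.9 says precisely that $f$ is real analytic at $\gamma(t_0)$ iff condition 1) of Lemma 5.7 holds for the function $t\mapsto f(\gamma(t))$ on a suitable open interval $I\ni t_0$. So the real-analytic side is already in the language of the lemma with no work.

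Next I would unwind the extendability side. Suppose $f$ is extendable at $\gamma(t_0)$: there is an open $V\ni\gamma(t_0)$ and holomorphic $F:V\to\mathbb{C}$ with $F|_{V\cap\gamma^*}=f|_{V\cap\gamma^*}$. Since $F$ is holomorphic near $\gamma(t_0)$, it has a convergent Taylor expansion $F(z)=\sum_{n=0}^\infty b_n(z-\gamma(t_0))^n$ on some disk $D(\gamma(t_0),s)\subseteq V$; because $\gamma$ is continuous, $\gamma(t)\in D(\gamma(t_0),s)$ for $t$ in some interval $(t_0-\epsilon,t_0+\epsilon)$, and there $f(\gamma(t))=F(\gamma(t))=\sum_n b_n(\gamma(t)-\gamma(t_0))^n$, which is condition 2) of Lemma 5.7. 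Conversely, if condition 2) holds, I want to produce the open set $V$ and the holomorphic $F$. Here I would use the analytic structure: by the discussion following Definition 4.20 (or directly, since $\gamma(t)=\Phi(e^{it})$ with $\Phi$ holomorphic and injective on an annulus $D(0,r,R)$), there is an open disk $D(t_0,\varepsilon)\subseteq\mathbb{C}$ and a holomorphic injective $\Gamma:D(t_0,\varepsilon)\to\mathbb{C}$ with $\Gamma(z)=\Phi(e^{iz})$ and $\Gamma(t)=\gamma(t)$ for real $t$ near $t_0$; then $\Gamma(D(t_0,\varepsilon))=:V$ is an open neighbourhood of $\gamma(t_0)$ (open mapping theorem), and $F(z):=\sum_n b_n(z-\gamma(t_0))^n$ is holomorphic on a subdisk of $V$ with $F(\gamma(t))=F(\Gamma(t))=f(t)\circ\gamma^{-1}$ agreeing with $f$ on $V\cap\gamma^*$. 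Thus extendability at $\gamma(t_0)$ is equivalent to condition 2) of Lemma 5.7 applied to $f\circ\gamma$.

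With both sides translated, the proposition is immediate: $f$ real analytic at $\gamma(t_0)$ $\iff$ condition 1) holds for $f\circ\gamma$ $\iff$ (by Lemma 5.7, which applies because $\gamma$ is an analytic Jordan curve) condition 2) holds for $f\circ\gamma$ $\iff$ $f$ extendable at $\gamma(t_0)$. I expect the only delicate point to be the careful bookkeeping in the direction ``condition 2) implies extendability'': one must make sure the disk of convergence of the complex power series, the neighbourhood $V=\Gamma(D(t_0,\varepsilon))$, and the interval on which $\gamma(t)$ stays inside the disk of convergence are all chosen consistently so that $F|_{V\cap\gamma^*}=f|_{V\cap\gamma^*}$ genuinely holds — but this is exactly the argument already carried out in the proof of Lemma 5.7 (via the locally invertible map $\Gamma$), so nothing new is needed beyond invoking it. Hence the proof reduces, after the two translations above, to a one-line application of Lemma 5.7.
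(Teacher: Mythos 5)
Your proposal is correct and follows essentially the same route as the paper: both directions are obtained by translating real analyticity and extendability into conditions 1) and 2) of Lemma 5.7 for $f\circ\gamma$, taking $F(z)=\sum_{n=0}^{\infty} b_n(z-\gamma(t_0))^n$ as the holomorphic extension in one direction and the Taylor expansion of the given extension in the other. The bookkeeping point you flag (shrinking the neighbourhood so that $V\cap\gamma^{*}$ lies in $\gamma((t_0-\epsilon,t_0+\epsilon))$, which uses that $\gamma$ is a simple curve) is glossed over in the paper's own proof as well, so nothing essential differs.
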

\begin{proof} 
$ \Rightarrow: $ If $ f $ is real analytic at $ \gamma(t_0) $, then from lemma 5.7 
$$f(\gamma(t))=
 \sum\limits_{n=0}^\infty b_n(\gamma(t)-\gamma(t_0))^n$$ for every $t\in(t_0-   
 \epsilon,t_0+
 \epsilon)$, for some $ b_n \in \mathbb {C}, \epsilon>0 $ and therefore, 
 $$F(z)=
 \sum\limits_{n=0}^\infty b_n(z-\gamma(t_0))^n$$ for $t\in D(z_0,\epsilon)$ is an extension of $ f $ at $ \gamma(t_0) $\\
 $ \Leftarrow: $ If $ f $ is extendable at $ \gamma(t_0) $, where $ f $ is the extension, then  
 $$f(\gamma(t))=F(\gamma(t))=\sum\limits_{n=0}^\infty b_n(\gamma(t)-\gamma(t_0))^n$$ 
 for every $z\in(t_0-\epsilon,t_0+\epsilon)$, for some $ b_n \in \mathbb {C}, \epsilon>0 $ and as a result, again from lemma 5.7 we conclude that $ f $ is real analytic at $ \gamma(t_0) $.
\end{proof}

The previous Proposition and the theorem 4.23, immediately prove the following theorem

\begin{theorem}
For $ k=0,1,2, \dots, \infty $ the set of functions $ f \in C^{k}(\gamma^{*}) $ which are nowhere real analytic is dense $ G_{\delta} $ subset of $ C^{k}(\gamma^{*}) $.
\end{theorem}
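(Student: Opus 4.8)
The plan is to reduce the statement to the dense-$G_{\delta}$-ness of the class $U_{5}(\gamma^{*},k)$, which is already available, through the identification of ``nowhere real analytic'' with ``nowhere extendable''. First I would make the pointwise translation explicit: by the preceding Proposition, for each fixed $t_{0}$ a function $f\in C^{k}(\gamma^{*})$ is real analytic at $\gamma(t_{0})$ in the sense of Definition~5.8 if and only if it is extendable at $\gamma(t_{0})$ in the sense of Definition~5.9, i.e.\ if and only if there exist an open set $V$ with $\gamma(t_{0})\in V$ and a holomorphic $F\colon V\to\mathbb{C}$ with $F|_{V\cap\gamma^{*}}=f|_{V\cap\gamma^{*}}$. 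Since every such $V$ contains a disk $D(\gamma(t_{0}),r)$ and the restriction of a holomorphic function is again holomorphic, one may take $V$ to be a disk, so non-extendability at $\gamma(t_{0})$ is precisely the failure of the defining condition of $U_{5}(\gamma^{*},k)$ at the point $z_{0}=\gamma(t_{0})$.

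Next I would pass from one point to all of $\gamma^{*}$. Intersecting the equivalence over all $t_{0}\in[0,2\pi]$, equivalently over all $z_{0}\in\gamma^{*}$, shows that $f$ is nowhere real analytic on $\gamma^{*}$ if and only if $f$ is not extendable at any point of $\gamma^{*}$, which by the definition of $U_{5}(\gamma^{*},k)$ (see also the closing sentence of Definition~5.9) is exactly $f\in U_{5}(\gamma^{*},k)$. It is at this point that the hypothesis that $\gamma$ is an \emph{analytic} Jordan curve is genuinely used: the preceding Proposition rests on Lemma~5.7, whose proof needs, at each $t_{0}$, the local conformal chart $\Gamma(z)=\Phi(e^{iz})$ coming from the injective holomorphic $\Phi$ on an annulus about $T$ with $\gamma(t)=\Phi(e^{it})$; such a chart exists at every point, so the equivalence holds uniformly in $t_{0}$ and the passage to the ``nowhere'' version is legitimate.

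Finally I would invoke Theorem~4.23, according to which $U_{5}(\gamma^{*},k)$ is a dense and $G_{\delta}$ subset of $C^{k}(\partial\Omega)$; since $\gamma$ parametrizes $\partial\Omega$, the symbols $C^{k}(\partial\Omega)$ and $C^{k}(\gamma^{*})$ denote the same space, and this is exactly the asserted conclusion. I do not expect any serious obstacle here: the real content has already been carried out in Lemma~5.7 (which genuinely uses analyticity of the curve) and in Theorem~4.23 (which transports the dense-$G_{\delta}$ property of $U_{5}(T)$ through the isomorphism $h(u)=u\circ\varphi$). The only thing requiring care is the bookkeeping with the quantifier --- matching ``nowhere real analytic'' with membership in the full intersection over $z_{0}\in\gamma^{*}$ defining $U_{5}(\gamma^{*},k)$, and not with failure of analyticity at a single prescribed point --- together with the routine remark that the finite-smoothness cases $k=0,1,2,\dots$ are treated verbatim as the case $k=\infty$, just as in Theorem~4.23.
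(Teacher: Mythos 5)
Your proposal is correct and follows essentially the same route as the paper: the authors also deduce the theorem immediately by combining Proposition 5.10 (real analyticity at $\gamma(t_0)$ is equivalent to extendability at $\gamma(t_0)$, hence ``nowhere real analytic'' means membership in $U_5(\gamma^{*},k)$) with Theorem 4.23, which transports the dense-$G_\delta$ property of $U_5(T,k)$ to $U_5(\gamma^{*},k)$ via the isomorphism $h(u)=u\circ\varphi$. Your extra bookkeeping about reducing the open set $V$ to a disk and intersecting over all points is exactly what the paper leaves implicit.
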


The following more general result was suggested by J.-P. Kahane.
\\

Let $ z_n, n=0,1,2, \cdots $ be a dense sequence of $ T $ and let $ M=(M_n), n=0,1,2, \cdots $ be a sequence of real numbers. For $ k,l \in \mathbb{N} $ we denote the set
$$ F (M,k,z_l ) = \left\{ f \in C^{\infty}(T) : \left| \frac{d^{n}(f)}{d\vartheta^{n}} (z_l) \right| \leq M_n k^n  \forall n=0,1,2, \dots  \right\} $$

\begin{lemma}
$ F (M,k,z_l ) $ is closed and has an empty interior in $C^{\infty}(T)$.
\end{lemma}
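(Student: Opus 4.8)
The plan is to treat the two assertions separately, both by elementary arguments.

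\emph{Closedness.} For each $n\ge 0$ the evaluation functional $\Lambda_n:C^\infty(T)\to\mathbb{C}$, $\Lambda_n(f)=\frac{d^n(f)}{d\vartheta^n}(z_l)$, is linear and satisfies $|\Lambda_n(f)|\le \sup_{\vartheta\in\mathbb{R}}\big|\frac{d^n(f)}{d\vartheta^n}(e^{i\vartheta})\big|$, and the right-hand side is exactly one of the seminorms defining the topology of $C^\infty(T)$; hence $\Lambda_n$ is continuous. Consequently
$$F(M,k,z_l)=\bigcap_{n=0}^\infty \Lambda_n^{-1}\big(\overline{D(0,M_nk^n)}\big)$$
is an intersection of closed sets, so it is closed. (If some $M_n<0$ the set is empty, hence trivially closed with empty interior; so from now on we may assume $M_n\ge 0$ for all $n$.)

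\emph{Empty interior.} I would argue by contradiction. If the interior were nonempty, then, since the topology of $C^\infty(T)$ is also generated by the increasing family of seminorms $p_N(g)=\max_{0\le j\le N}\sup_{\vartheta}\big|\frac{d^j(g)}{d\vartheta^j}(e^{i\vartheta})\big|$, there would exist $f_0\in C^\infty(T)$, $N\in\mathbb{N}$ and $\varepsilon>0$ with $\{\,f_0+g:\ p_N(g)<\varepsilon\,\}\subseteq F(M,k,z_l)$. The idea is to perturb $f_0$ by a high-frequency trigonometric monomial which is small in the $C^N$-sense but has an enormous $(N+1)$-st derivative at $z_l$: for a large integer $N'$ take $g(e^{i\vartheta})=\frac{\varepsilon}{2(N')^{N}}e^{iN'\vartheta}$, so that $\frac{d^j(g)}{d\vartheta^j}=\frac{\varepsilon}{2(N')^N}(iN')^j e^{iN'\vartheta}$ and therefore $\sup_{\vartheta}\big|\frac{d^j(g)}{d\vartheta^j}\big|=\frac{\varepsilon}{2}(N')^{\,j-N}\le\frac{\varepsilon}{2}<\varepsilon$ for $0\le j\le N$, i.e. $p_N(g)<\varepsilon$. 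Hence $f_0+g\in F(M,k,z_l)$, which forces $\big|\frac{d^{N+1}(f_0+g)}{d\vartheta^{N+1}}(z_l)\big|\le M_{N+1}k^{N+1}$; but $\big|\frac{d^{N+1}(g)}{d\vartheta^{N+1}}(z_l)\big|=\frac{\varepsilon}{2(N')^N}(N')^{N+1}=\frac{\varepsilon N'}{2}$, so the triangle inequality gives
$$\frac{\varepsilon N'}{2}\le M_{N+1}k^{N+1}+\Big|\frac{d^{N+1}(f_0)}{d\vartheta^{N+1}}(z_l)\Big|.$$
The right-hand side is independent of $N'$, so taking $N'$ large enough is a contradiction, and $F(M,k,z_l)$ has empty interior.

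I do not expect a genuine obstacle here; the argument is a routine "bump by a high frequency" perturbation. The only points requiring a line of justification are that the defining seminorms of $C^\infty(T)$ may be replaced by the increasing family $p_N$, so a basic neighbourhood of $0$ has the form $\{g:p_N(g)<\varepsilon\}$, and that $\sup_{\vartheta}\big|\frac{d^{N+1}(\cdot)}{d\vartheta^{N+1}}\big|$ is itself a continuous seminorm, which is exactly what makes the $(N+1)$-st derivative at $z_l$ of a member of $F(M,k,z_l)$ bounded by $M_{N+1}k^{N+1}$. If one prefers a real-valued perturbation one may use $g(e^{i\vartheta})=\frac{\varepsilon}{2(N')^{N}}\cos(N'\vartheta)$ instead, but this is not needed.
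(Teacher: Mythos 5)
Your proof is correct and follows essentially the same route as the paper: closedness from continuity of the evaluation functionals $f\mapsto \frac{d^{n}f}{d\vartheta^{n}}(z_l)$, and empty interior by perturbing with a high-frequency trigonometric monomial whose first $N$ derivatives are uniformly small while its $(N+1)$-st derivative at $z_l$ is made arbitrarily large, contradicting the defining bound via the triangle inequality. The only difference is cosmetic (you fix the $C^N$-size and let the frequency $N'\to\infty$, whereas the paper fixes a large amplitude $A$ and then chooses the frequency $b$ with $A/b<\varepsilon$), so no further comparison is needed.
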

\begin{proof}
It is obvious that this set is closed. Let suppose that there is $ f\in \left(F (M,k,z_l )\right)^{o}$. Then, there exist $ m\in \mathbb{N} $ and  $ \varepsilon>0 $ such that 
$$V=\left\{ u \in C^{\infty}(T) : \sup\limits_{\vartheta\in \mathbb{R}}\left| \frac{d^{n}(u)}{d\vartheta^{n}}(e^{i\vartheta})-\frac{d^{n}(f)}{d\vartheta^{n}} (e^{i\vartheta}) \right|<\varepsilon \right\}\subseteq F (M,k,z_l ) $$ for all $ n=0,1,2, \cdots , m$.
\\
Let $$ A>k^{m+1} M_{m+1}+ \sup\limits_{\vartheta\in \mathbb{R}} \left| \frac{d^{m+1}(f)}{d\vartheta^{m+1}}(e^{i\vartheta}) \right|$$
with $ A>0 $ and $ b \in \mathbb{N} $ with $ \frac{A}{b}<\varepsilon $. Then, let $ a $ be the trigonometric polynomial $ a(e^{i\vartheta})=\frac{A}{b^{m+1}}e^{ib\vartheta} $. We have that 
$$ \sup\limits_{\vartheta\in \mathbb{R}} \left| \frac{d^{n}(a)}{d\vartheta^{n}}(e^{i\vartheta}) \right|<\varepsilon $$ for $ n=0,1,2, \cdots, m $ and
$$ \left| \frac{d^{n}(a)}{d\vartheta^{m+1}}(e^{i\vartheta}) \right|=A $$
Therefore, $ a+f \in V\subseteq F (M,k,z_l ) $ and so we have that $$A - \left| \frac{d^{m+1}(a)}{d\vartheta^{m+1}} (z_l) \right| \leq M_{m+1} k^{m+1} $$
which is a contradiction.
\\

As result of this Lemma we have from Baire's theorem that the set 
$ \bigcap\limits_{l=1}^{\infty}\bigcap\limits_{k=1}^{\infty}(C^{\infty}(T)\backslash F (M,k,z_l ) )$ is a dense and $ G_\delta$ subset of $C^{\infty}(T)$. For $ M_n=n! $ if $ f\in\bigcap\limits_{l=1}^{\infty}\bigcap\limits_{k=1}^{\infty}(C^{\infty}(T)\backslash F (M,k,z_l ) ) $
then we easily can see that $f$ is nowhere real analytic at $ T $ and therefore according to Proposition 5.10 we have that $ U_5(T) $ contains a dense and $ G_\delta $ subset of $C^{\infty}(T)$.
\end{proof}

\begin{defi}
Let $ f:\Omega\rightarrow\mathbb{C}$ be a holomorphic function. We say that $ f $ belongs to the set $ A^{\infty}(\Omega) $ if for every $ l=0,1,2,3 \dots $, $ f^{(l)} $ can be extended to a continuous function in $ \overline{\Omega} $.  The topology of $A^{\infty}(\Omega)$ is defined by the seminorms  
$$ \sup\limits_{z\in\overline{\Omega}} |f^{(l)}(z)| , l=0,1,2,3\dots $$ 
It can easily be proven that for a function $ g=fo\varphi $ we have that $ f\in A^{\infty}(\Omega) $ if and only if $ g \in A^{\infty}(D) $ and that the function $ h:A^{\infty}(\Omega)\rightarrow A^{\infty}(D) $ with $ h(f)=fo\varphi=g $ is a linear one and a topological homeomorphism.
\end{defi}
\begin{defi}
We say that an harmonic function $ f:\Omega\rightarrow \mathbb {C} $ belongs to the set $ H_{\infty}(\Omega) $ if $ L_{k,l}(f) $ can be extended to a continuous function in $ \overline{\Omega} $ for every $ k,l=0,1,2 \dots $, where $ L_{k,l}=\frac{\partial^k}{\partial z^k} \frac{\partial^l}{\partial \overline{z}^l}$. We observe that if $ k\geq 1 $ and $ l\geq 1 $ then $ L_{k,l}(f)\equiv0 $, because $ f  $ is harmonic function. The topology of $ H_{\infty}(\Omega) $ is defined by the seminorms 
$$ \sup\limits_{z\in\overline{\Omega}} \left| \frac{\partial^k f(z)}{\partial z^k} \right|, \sup\limits_{z\in\overline{\Omega}} \left|\frac{\partial^l f(z)}{\partial \overline{z}^l}\right|, k,l\geq 0.$$
As in the previous definition, we can see that for a function $ g=fo\varphi $ we have that $ f \in H_{\infty}(\Omega) $ if and only if $ g \in H_{\infty}(D) $ and that the function $ h:H_{\infty}(\Omega)\rightarrow H_{\infty}(D) $ with $ h(f)=fo\varphi=g $ is isomorphism. 
\end{defi}

At this point, we use the conformal function $ \varphi $ in order to extend a previous result. Specifically, we easily can conclude that $ C^{\infty}(\partial \Omega)=A^{\infty}(\Omega)\oplus A^{\infty}_0(\Omega)=H_{\infty}(\Omega) $, where $A^{\infty}(\Omega)=\{f\in A^{\infty}(\Omega): f(\varphi(0))=0\}$.

Now, we will consider the case of a domain $ \Omega $ which is bounded by a finite number of pairwise disjoint Jordan analytic curves $ \gamma_1,\gamma_2,\dots ,\gamma_m $. For a function $ u: \partial \Omega\rightarrow \mathbb {C} $ we denote that $ u\in C^{\infty}(\partial \Omega)  $ if $ u|_{\gamma^{*}_{j}} \in C^{\infty}(\gamma^{*}_{j}) $ for every $ j=1,2,3 \dots, m $. Let $ \phi_j:D\rightarrow V_j $ a Riemann function, where $V_j$ is the interior of $ \gamma_j $ for every $ j=1,2,3 \dots, m $. The topology of $C^{\infty}(\partial \Omega)$ is defined by the seminorms 
$$ \max\limits_{j=1,2, \dots , m}\sup\limits_{\vartheta\in \mathbb{R}} \left| \frac{d^l (u(\gamma_jo\varphi_j))}{d\vartheta^l}(e^{i\vartheta})\right|, l=0,1,2, \dots$$
At the following the symbols $ \Omega, \gamma_j, \phi_j, V_j$ will have the same notion as above, unless something else is denoted.
\\
\begin{defi}
We say that a holomorphic function belongs to the class $ A^{\infty}(\Omega) $ if for every $ l=0,1,2 \dots $, $ f^{(l)} $ is extended to a continuous function   in $\overline{\Omega}  $. The topology of $ A^{\infty}(\Omega) $ is defined by the seminorms $ \sup\limits_{z\in\Omega} \left| f^{(l)}(z)\right|, l=0,1,2,3 \dots$
\end{defi}

According to a well-known result ([3]) every holomorphic function $ f $ in $ \Omega $ can be uniquely written as $ f=f_1+f_2+\dots+f_{m-1} $ where $ f_j $ is holomorphic in $ W_j^{c} $ for $ j=0,1,2, \dots, m-1 $ and $ \lim\limits_{z\rightarrow\infty}f_k(z)=0 $ for $ k=1,2, \dots, m-1 $, where $ W_j, j=0,1,2,\dots,m-1 $ are the the connected components of $ \left(\overline{\Omega}\right)^c $ and $ W_0 $ is the non-bounded connected component of $ \left(\overline{\Omega}\right)^c $. It is obvious that if $ f\in A^{\infty}(\Omega) $ then $ f_j\in A^{\infty}(W_j^c) $ and as a result we have that $ A^{\infty}(\Omega)=A^{\infty}(W_0^c)\oplus A_0^{\infty}(W_1^c)\oplus\dots\oplus A_0^{\infty}(W_{m-1}^c) $, where $A_0^{\infty}(W_k^c)=\{f\in A^{\infty}(W_k^c):\lim\limits_{z\rightarrow\infty}f(z)=0\}$ for $ k=1,2,\dots,m-1 $.
\\

At this point, we want to examine if it is true that $ C^{\infty}(\partial\Omega)=A^{\infty}(\Omega)_{|\partial\Omega}\oplus \overline{X} $ for a subspace $ X $ of $ A^{\infty}(\Omega)_{|\partial\Omega} $ in order to extend the corresponding result for $ \Omega=D $. Unfortunately, we can prove that this is false, when $ m\geq 2 $, in other words, when $ \Omega $ is not simply connected. 
\\
Indeed, let $ z_0 $ be a point in a bounded connected component of $ \left(\overline{\Omega}\right)^c $. Also, let $ u:\mathbb {C}\setminus \{z_0\}\rightarrow \mathbb {R} $  be a function with $ u(z)=ln|z-z_0| $ for $ z\in \mathbb {C}\setminus \{z_0\} $. This function is an harmonic one and  $ u|_{\partial\Omega} \in C^{\infty}(\partial\Omega) $. If there exist $ f,g \in A^{\infty}(\Omega) $ such that $ u|_{\partial\Omega}=f|_{\partial\Omega}+\overline{g}|_{\partial\Omega} $, then we have that 
$$ u|_{\partial\Omega}=Re(u|_{\partial\Omega})=Re\left(f|_{\partial\Omega}+\overline{g}|_{\partial\Omega} \right)=Re (f)|_{\partial\Omega}+Re(g) |_{\partial\Omega}=Re(f+g)|_{\partial\Omega}$$
So,we have that $ u, Re(f+g)$ are harmonic functions in $ \Omega $, continuous functions in $ \overline{\Omega} $ and $ u|_{\partial\Omega}= Re(f+g)|_{\partial\Omega} $. Therefore, from a known theorem we have that $ u=Re(f+g) $ in $ \Omega $. The function $ \frac{z-z_0}{e^{(f+g)(z)}} $ is holomorphic in $ \Omega $ and $$ \left|\frac{z-z_0}{e^{(f+g)(z)}}\right|=\frac{|z-z_0|}{e^{\left(Re(f+g)\right)(z)}}=\frac{|z-z_0|}{e^{u(z)}}=\frac{|z-z_0|}{|z-z_0|}=1 $$ for every $ z\in \Omega $ and as a result, because $\Omega$ is a domain, we have that $ \frac{z-z_0}{e^{(f+g)(z)}} $ is constant in $ \Omega $ and so there exists $ c\in\mathbb {C} $ with $ |c|=1 $ and $ ce^{(f+g)(z)}=z-z_0 $ for every $ z\in \Omega $. Therefore, there exists a holomorphic logarithm of $ z-z_0 $ in $ \Omega $, while there is a Jordan curve in $ \Omega $ which includes $ z_0 $ in its interior, which it is well-known that this is a contradiction.
\\

\begin{defi}
We will say that a function  $ f\in A^{\infty}(\Omega) $ is extendable at $ \gamma_j(t_0) $ for some $ j\in \{1,2 \dots, m\} $ if there exists $ \delta>0 $ and a holomorphic function $ F:D(\gamma_j(t_0),\delta)\rightarrow \mathbb {C} $ such that $ F|_{D(\gamma(t_0),\delta)\cap \overline{\Omega}}=f|_{D(\gamma(t_0),\delta)\cap \overline{\Omega}} $.
\end{defi}

At this point, we will need the following lemma:

\begin{prop}
A function  $ f\in A^{\infty}(\Omega) $ is extendable at $ \gamma_j(t_0) $ if and only if $ f|{\gamma^{*}_j} $ is real analytic at $ \gamma_j(t_0) $.
\end{prop}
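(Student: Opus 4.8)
The plan is to reduce the statement to Proposition 5.10, which already identifies real analyticity of a boundary function on an analytic Jordan curve with its extendability in the weak sense of Definition 5.9 (a holomorphic $F$ on \emph{some} open set $V \ni \gamma_j(t_0)$ with $F = f|_{\gamma_j^{*}}$ on $V \cap \gamma_j^{*}$); the only real work is to bridge between that notion and the stronger one of Definition 5.16, which demands a holomorphic $F$ on a full disk $D(\gamma_j(t_0),\delta)$ with $F = f$ on all of $D(\gamma_j(t_0),\delta) \cap \overline{\Omega}$. One direction of this bridge is immediate: if $F : D(\gamma_j(t_0),\delta) \to \mathbb{C}$ realizes Definition 5.16, then, since $\gamma_j^{*} \subseteq \partial\Omega \subseteq \overline{\Omega}$, restriction gives $F = f|_{\gamma_j^{*}}$ on $D(\gamma_j(t_0),\delta) \cap \gamma_j^{*}$, so $f|_{\gamma_j^{*}}$ is extendable at $\gamma_j(t_0)$ in the sense of Definition 5.9 and Proposition 5.10 shows that $f|_{\gamma_j^{*}}$ is real analytic at $\gamma_j(t_0)$.

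For the converse, suppose $f|_{\gamma_j^{*}}$ is real analytic at $\gamma_j(t_0)$. By Proposition 5.10 there are an open set $V \ni \gamma_j(t_0)$ and a holomorphic $F : V \to \mathbb{C}$ with $F = f|_{\gamma_j^{*}}$ on $V \cap \gamma_j^{*}$, and it remains to see that $F$ agrees with $f$ on a whole neighbourhood of $\gamma_j(t_0)$ inside $\overline{\Omega}$. Since $\partial\Omega = \bigcup_{i=1}^{m} \gamma_i^{*}$ is a finite union of pairwise disjoint analytic Jordan curves, I would choose $\rho > 0$ with $D(\gamma_j(t_0),\rho) \subseteq V$ and $D(\gamma_j(t_0),\rho) \cap \partial\Omega = D(\gamma_j(t_0),\rho) \cap \gamma_j^{*}$, the latter being a single analytic crosscut of the disk; then $D(\gamma_j(t_0),\rho) \setminus \gamma_j^{*}$ has exactly two components, one of which, say $P$, is contained in $\Omega$ while the other is disjoint from $\overline{\Omega}$. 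On $P$ the difference $F - f$ is holomorphic, it extends continuously to $P \cup (D(\gamma_j(t_0),\rho) \cap \gamma_j^{*})$, and it vanishes on that analytic arc. Straightening the arc to a segment of $\mathbb{R}$ by a local conformal change of variable (available because the arc is analytic, exactly as in case (iv) of Proposition 2.1) turns $F - f$ into a function holomorphic on a half-disk, continuous up to its diameter and identically zero there; extending it by $0$ across the diameter produces a continuous function on the full disk, holomorphic off a segment, hence holomorphic everywhere by Morera's theorem, and vanishing on an open half-disk, hence vanishing identically. Therefore $F = f$ on $P$, and by continuity on $P \cup (D(\gamma_j(t_0),\rho) \cap \gamma_j^{*}) = D(\gamma_j(t_0),\rho) \cap \overline{\Omega}$; taking $\delta = \rho$ exhibits $f$ as extendable at $\gamma_j(t_0)$ in the sense of Definition 5.16, which completes the proof.

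The main obstacle is precisely this last step — the uniqueness of a holomorphic function determined by its values on a one-sided analytic boundary arc — but it rests on the same mechanism already used repeatedly in the paper (the removability statement of Proposition 2.1 and the ``well-known result'' quoted in Remark 3.10), so no new idea is needed, only care with the local geometry of $\partial\Omega$ near $\gamma_j(t_0)$. A possible variant would replace this reflection argument by transporting the local picture through a Riemann map of the component of $(\overline{\Omega})^{c}$ whose boundary is $\gamma_j$ — or of $\Omega$ itself, when $\gamma_j$ is the outer boundary curve — which extends conformally past $\gamma_j^{*}$ near $\gamma_j(t_0)$ since $\gamma_j$ is analytic, and then applying the classical identity theorem on the unit disk; the reflection argument is, however, shorter and spares us the case split between the outer boundary curve and the curves bounding the holes of $\Omega$.
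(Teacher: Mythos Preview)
Your proof is correct and rests on exactly the same two ingredients as the paper's: Proposition~5.10 to pass between real analyticity and the existence of a local holomorphic extension across $\gamma_j^{*}$, and the removability of an analytic arc (Proposition~2.1) to handle the matching across $\gamma_j^{*}$. The implementation of the converse differs slightly. You take the holomorphic $F$ produced by Proposition~5.10, form $F-f$ on the $\Omega$-side half-disk $P$, and argue it vanishes by extending it by zero across the straightened arc and invoking Morera/identity. The paper instead \emph{defines} the extension directly by gluing: it sets
\[
F(z)=\begin{cases} f(z), & z\in D(\gamma_j(t_0),\delta)\cap\overline{\Omega},\\ g(z), & z\in D(\gamma_j(t_0),\delta)\cap(\mathbb{C}\setminus\Omega),\end{cases}
\]
where $g$ is the local holomorphic function coming from Proposition~5.10, observes that $F$ is continuous on the disk and holomorphic off $\gamma_j^{*}$, and applies Proposition~2.1 once to conclude $F$ is holomorphic. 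The paper's route is a little shorter and avoids the separate uniqueness step, while yours makes the role of the boundary identity theorem more explicit; both are valid packagings of the same removability fact.
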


\begin{proof}
If $ f $ is extendable at $ \gamma_j(t_0) $ then $ f|_{\gamma^{*}_j} $ is real analytic at $ \gamma_j(t_0) $ from Proposition 5.10.\\
Conversely, if $ f|_{\gamma^{*}_j} $ is real analytic at $ \gamma_j(t_0) $ , then from Proposition 5.10 there exists $ \delta>0 $ and a holomorphic function $ g:D(\gamma_j(t_0),\delta)\rightarrow \mathbb {C} $ such that $ f|_{\gamma^{*}_j \cap D(\gamma_j(t_0),\delta)}= g|_{\gamma^{*}_j \cap D(\gamma_j(t_0),\delta)}$. Let $ F:D(\gamma_j(t_0),\delta)\rightarrow \mathbb {C} $ be a function with
$$ F(z) = \left\{
\begin{array}{ c l }
f(z),   &    z\in D(\gamma_j(t_0),\delta) \cap \overline{\Omega}  \\
g(z),   &    z\in D(\gamma_j(t_0),\delta) \cap (\mathbb {C}\setminus \Omega)
\end{array}
\right. $$
The function is well-defined  since $ f|{\gamma^{*}_j \cap D(\gamma_j(t_0),\delta)}= g|{\gamma^{*}_j \cap D(\gamma_j(t_0),\delta)}$, is holomorphic in $D(\gamma_j(t_0),\delta)\setminus \gamma^{*}_j$ and continuous in $D(\gamma_j(t_0),\delta)$. So, according to Proposition 2.1, we have that $ f $ is holomorphic in $D(\gamma_j(t_0),\delta)$ and so $ F $ is an extension at $ \gamma_j(t_0) $.
\end{proof}

With the previous Proposition, we can prove the following theorem.

\begin{theorem}
The set of functions $ f\in A^{\infty}( \Omega )$ such that for every $ j\in \{1,2 \dots, m\} $, $f|{\gamma^{*}_j} $ is nowhere real analytic, is a dense and $ G_{\delta} $ subset of $ A^{\infty} (\Omega) $.
\end{theorem}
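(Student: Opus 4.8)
The plan is to write the set in the statement as $\bigcap_{j=1}^{m}N_j$, where $N_j:=\{\,f\in A^{\infty}(\Omega): f|_{\gamma_j^{*}}\ \text{is nowhere real analytic}\,\}$, and to show that each $N_j$ is a dense $G_\delta$ subset of $A^{\infty}(\Omega)$. A finite intersection of $G_\delta$ sets is $G_\delta$, and by Baire's theorem (applicable since $A^{\infty}(\Omega)$ is a Fréchet space) a finite intersection of dense $G_\delta$ sets is dense; so this reduction suffices. By Proposition 5.17, $N_j$ is precisely the set of $f\in A^{\infty}(\Omega)$ that are not extendable, in the sense of Definition 5.16, at any point of $\gamma_j^{*}$.

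For the $G_\delta$ part I would use the restriction map $\rho_j:A^{\infty}(\Omega)\to C^{\infty}(\gamma_j^{*})$, $\rho_j(f)=f|_{\gamma_j^{*}}$. It is linear and continuous: from the defining seminorms of the two spaces, using that the conformal parametrisation of $\gamma_j$ and all of its derivatives are bounded, $\sup_{t}|(f|_{\gamma_j^{*}}\circ\gamma_j)^{(l)}(t)|$ is dominated by a seminorm of $A^{\infty}(\Omega)$. Now $N_j=\rho_j^{-1}\big(\{\,w\in C^{\infty}(\gamma_j^{*}): w\ \text{is nowhere real analytic}\,\}\big)$, and by Theorem 5.11 (applied to the analytic Jordan curve $\gamma_j$, together with Proposition 5.10) the set of nowhere real analytic functions is a dense $G_\delta$ subset of $C^{\infty}(\gamma_j^{*})$. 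Hence $N_j$ is $G_\delta$ in $A^{\infty}(\Omega)$, and so is $\bigcap_{j=1}^{m}N_j$.

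The density of $\bigcap_{j=1}^{m}N_j$ is the main point, and for it I would exploit the direct sum decomposition $A^{\infty}(\Omega)=A^{\infty}(W_0^{c})\oplus A_0^{\infty}(W_1^{c})\oplus\cdots\oplus A_0^{\infty}(W_{m-1}^{c})$ recalled above, in which every $f\in A^{\infty}(\Omega)$ is uniquely $f=f_0+\cdots+f_{m-1}$ with $f_k$ holomorphic on $(\mathbb{C}\cup\{\infty\})\setminus\overline{W_k}$, and the inclusions of the summands into $A^{\infty}(\Omega)$ are continuous. Since $\gamma_1,\dots,\gamma_m$ are pairwise disjoint, each $\gamma_j^{*}$ is the boundary of exactly one component $W_{\sigma(j)}$ of $(\overline{\Omega})^{c}$, and $j\mapsto\sigma(j)$ is a bijection of $\{1,\dots,m\}$ onto $\{0,\dots,m-1\}$. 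For $k\neq\sigma(j)$ the closures $\overline{W_k}$ and $\overline{W_{\sigma(j)}}$ are disjoint, so $\sum_{k\neq\sigma(j)}f_k$ is holomorphic on an open neighbourhood of $\overline{W_{\sigma(j)}}$, hence near $\gamma_j^{*}$; it follows that $f$ is extendable at a point of $\gamma_j^{*}$ if and only if $f_{\sigma(j)}$ is. Combining this with Proposition 5.17, $f\in N_j$ if and only if the single summand $f_{\sigma(j)}$ is nowhere extendable across the analytic Jordan curve $\gamma_j^{*}=\partial\overline{W_{\sigma(j)}}$. Therefore, under the algebraic isomorphism $A^{\infty}(\Omega)\cong X_0\times X_1\times\cdots\times X_{m-1}$ (with $X_0=A^{\infty}(W_0^{c})$ and $X_k=A_0^{\infty}(W_k^{c})$ for $k\geq1$), the set $\bigcap_{j=1}^{m}N_j$ corresponds to the product $\prod_{k=0}^{m-1}E_k$, where $E_k$ is the set of $f_k\in X_k$ that are nowhere extendable across the curve bounding $W_k$.

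It remains to see that each $E_k$ is dense in $X_k$. Transporting $X_k$ by the Riemann map of the simply connected (on the sphere) domain $(\mathbb{C}\cup\{\infty\})\setminus\overline{W_k}$ onto the unit disk, respectively onto the exterior of the unit disk followed by $z\mapsto 1/z$, identifies $X_k$ — as in Definition 5.13 and the remark following it — with $A^{\infty}(D)$ or with $A_0^{\infty}(D)$, and under this identification $E_k$ becomes the set of functions in $A^{\infty}(D)$ (respectively $A_0^{\infty}(D)$) that are nowhere extendable across $T$; this set is dense by [8], which is exactly the density statement already invoked in the proofs of Theorems 3.11 and 3.18. A product of dense sets is dense in the product, and the summing map $\prod_k X_k\to A^{\infty}(\Omega)$ is continuous, so $\bigcap_{j=1}^{m}N_j$ is dense in $A^{\infty}(\Omega)$; combined with the second paragraph this proves the theorem. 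The step I expect to demand the most care is the one in the third paragraph: correctly pairing each boundary curve $\gamma_j$ with the complementary component $W_{\sigma(j)}$ it bounds, and verifying that the remaining summands of $f$ are genuinely holomorphic past $\gamma_j^{*}$, so that extendability of $f$ at a boundary point is controlled by exactly one summand. Once that is in place, the product–Baire argument is routine.
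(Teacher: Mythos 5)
Your proposal is correct, but it proves the theorem by a genuinely more self-contained route than the paper. The paper's own proof is two lines: after using Proposition 5.17 to identify ``$f|_{\gamma_j^{*}}$ nowhere real analytic for every $j$'' with ``$f$ nowhere extendable at any point of $\partial\Omega$'' (exactly as you do), it simply quotes [8] and [10] for the fact that the nowhere extendable functions form a dense and $G_\delta$ subset of $A^{\infty}(\Omega)$ when $\Omega$ is bounded by finitely many disjoint analytic Jordan curves. You instead rebuild that genericity statement from material already in the paper: the $G_\delta$ part via the continuous restriction maps $\rho_j:A^{\infty}(\Omega)\to C^{\infty}(\gamma_j^{*})$ together with Theorem 5.11 (equivalently Theorem 4.23 plus Proposition 5.10), and the density part via the decomposition $A^{\infty}(\Omega)=A^{\infty}(W_0^{c})\oplus A_0^{\infty}(W_1^{c})\oplus\cdots\oplus A_0^{\infty}(W_{m-1}^{c})$, the observation that near $\gamma_j^{*}$ every summand except the one attached to $W_{\sigma(j)}$ is holomorphic, and the density of nowhere extendable functions in $A^{\infty}(D)$ from [8] transported by a conformal map that extends across the analytic boundary. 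The paper's citation buys brevity ([10] covers precisely the multiply connected $A^{\infty}(\Omega)$ setting), while your argument buys transparency and stays within the paper's own techniques; it is in effect the multi-component analogue of the density argument of Theorems 3.11--3.12. Two small points to tighten: (a) for the summands $A_0^{\infty}(W_k^{c})$, $k\geq 1$, the identification gives the subspace of $A^{\infty}(D)$ vanishing at the interior point $p$ corresponding to $\infty$, and [8] gives density only in $A^{\infty}(D)$; you need the same correction as in Theorem 3.12, replacing approximants $g_n$ by $g_n-g_n(p)$, which preserves non-extendability; (b) in identifying $\bigcap_j N_j$ with $\prod_k E_k$, note that for $\zeta\in\gamma_j^{*}$ and small $r$ one has $D(\zeta,r)\cap\Omega=D(\zeta,r)\cap\bigl(\mathbb{C}\setminus\overline{W_{\sigma(j)}}\bigr)$, so extendability of $f_{\sigma(j)}$ relative to $\overline{\Omega}$ and relative to its own domain of definition coincide --- this is the point you flagged yourself, and it does go through.
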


\begin{proof}
Because of the Proposition 5.17, the set of functions $ f\in A^{\infty} (\Omega) $ such that for every $ j\in \{1,2 \dots, m\}, f|{\gamma^{*}_j} $ is nowhere real analytic, is the same with the set of functions $ f\in A^{\infty}( \Omega )$ such that $ f $ is nowhere extendable in $ \partial\Omega $. However, the last set is a dense and $ G_{\delta} $ subset of $ A^{\infty}( \Omega) $ according to [8] and [10]. Therefore, the same is true for the first set.
\end{proof}

\begin{remark}
According to [6] if $ \gamma $ is a analytic Jordan  curve, then the same is true for the parametrization of $ \gamma $ by the arc length. Therefore, the Theorem 5.18 is also valid for this parametrization of $ \gamma $.
\end{remark}

\section{The complex method and non-extendability}
At this paragraph we will use a complex method to prove that the results of Paragraph 3. 
\\
Firstly, we will consider the segment $ [0,1] $ and we will prove that for every $ k=1,2, \dots $ or  $k=\infty $ the set of functions of $C^{k}[0,1]$ which are not extended to a holomorphic function at a disk with center at any point of $ [0,1] $ is a dense $ G_\delta $ subset of the respective $ C^{k}[0,1] $. The same is true for the continuous functions or for semidisks instead of disks .\\
For the proof, we will need the following lemma. 

\begin{lemma}
Let $ D(z_0,r)$ be a disk with center $ t_0\in (0,1) $ and radius $ r>0 $ and let $ +\infty > M > 0 $.  The set of continuous functions $ f:[0,1]\rightarrow \mathbb{C} $, for which there exists a holomorphic  function $ F  $ at $ D(t_0,r)$ where $ F $ is bounded by $ M $ and $ F|_{D(t_0,r)\cap [0,1]}=f|_{D(t_0,r)\cap [0,1]} $,is a closed subset of $ C\left([0,1]\right) $ and it has empty interior.
\end{lemma}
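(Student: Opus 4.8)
The plan is to prove the two assertions—closedness and emptiness of the interior—separately, using Montel's theorem for the first and a non-smooth perturbation for the second. Write $E$ for the set in the statement, and $J := D(t_0,r)\cap[0,1]$, which is an interval containing $t_0$ in its interior because $t_0 \in (0,1)$.

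\textbf{Closedness.} Suppose $f_n \in E$ and $f_n \to f$ uniformly on $[0,1]$; I would produce a bounded holomorphic extension of $f$. By definition each $f_n$ has a holomorphic $F_n : D(t_0,r) \to \mathbb{C}$ with $|F_n(z)| \le M$ on $D(t_0,r)$ and $F_n = f_n$ on $J$. Since $\{F_n\}$ is uniformly bounded on $D(t_0,r)$, Montel's theorem yields a subsequence $F_{n_k}$ converging locally uniformly on $D(t_0,r)$ to a holomorphic function $F$; letting $k\to\infty$ in $|F_{n_k}| \le M$ gives $|F| \le M$ on $D(t_0,r)$. For $x \in J$ one has $F(x) = \lim_k F_{n_k}(x) = \lim_k f_{n_k}(x) = f(x)$, using locally uniform convergence on $D(t_0,r)$ together with uniform convergence of $f_n$ to $f$ on $[0,1]$. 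Hence $F$ is a holomorphic extension of $f$ to $D(t_0,r)$ bounded by $M$, so $f \in E$ and $E$ is closed.

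\textbf{Empty interior.} Suppose, for contradiction, that some $f$ is interior to $E$, so there is $\varepsilon>0$ with $g\in E$ whenever $\|g-f\|_\infty < \varepsilon$; in particular $f$ itself has a holomorphic extension $F$ on $D(t_0,r)$ with $|F|\le M$. Consider $h(x) = \frac{\varepsilon}{2}\,|x-t_0|$, which is continuous on $[0,1]$, satisfies $\|h\|_\infty \le \frac{\varepsilon}{2} < \varepsilon$ (since $|x-t_0|\le 1$ on $[0,1]$), and is not differentiable at $t_0$. Then $f+h \in E$, so it has a holomorphic extension $G$ on $D(t_0,r)$ with $|G| \le M$, and $G - F$ is holomorphic on $D(t_0,r)$ with $(G-F)|_{J} = h|_{J}$. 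But the restriction to a real neighbourhood of $t_0$ of a function holomorphic on a disk centred at $t_0$ is real-analytic, hence differentiable, at $t_0$ (just restrict the Taylor series of $G-F$ at $t_0$ to the reals), contradicting the non-differentiability of $h$ at $t_0$. Therefore $E$ has empty interior.

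\textbf{Main obstacle.} The delicate point is the closedness argument: Montel's theorem supplies only a subsequence, not convergence of the whole sequence $(F_n)$, so one must be content with a subsequential limit—which is harmless here, since membership in $E$ only requires the existence of a single bounded holomorphic extension of $f$. The remaining care is in verifying that this subsequential limit simultaneously inherits the bound $M$ and agrees with $f$ on the real interval $J$; both follow at once from locally uniform convergence on $D(t_0,r)$. The empty-interior step is routine once one observes that any perturbation realized as the trace on $J$ of a (bounded) holomorphic function on $D(t_0,r)$ is forced to be real-analytic at $t_0$, which a corner at $t_0$ manifestly is not.
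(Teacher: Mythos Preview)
Your proof is correct. The closedness argument via Montel's theorem is exactly what the paper does, with the same observations about the subsequence and the bound~$M$.

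For the empty-interior step you take a genuinely different route. The paper perturbs $f$ by $\dfrac{a}{2(t-z_0)}$ for a point $z_0$ in the disk off the real axis, with $a$ small enough to stay in the $\delta$-ball; the resulting extension is forced by analytic continuation to equal $F(z)+\dfrac{a}{2(z-z_0)}$, which has a pole at $z_0$ and hence violates the bound~$M$. You instead perturb by the corner $\tfrac{\varepsilon}{2}|x-t_0|$ and argue that the difference $G-F$ of the two holomorphic extensions would have to be real-analytic, hence differentiable, at $t_0$, contradicting the corner. Both arguments are clean; yours is slightly more elementary in that it never actually uses the specific bound~$M$ in the empty-interior step (only the existence of \emph{some} holomorphic extension of $f+h$), whereas the paper's pole trick relies directly on the boundedness. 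The paper's perturbation, on the other hand, has the advantage of staying within the category of real-analytic functions on $[0,1]$, which is occasionally useful in related density arguments.
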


\begin{proof}
Let $ A $ be the set of continuous function$ f:[0,1]\rightarrow \mathbb{C} $ which are extended to a holomorphic  function $ F  $ at $ D(t_0,r)$ where $ F $ is bounded by $ M $and $ F|_{D(t_0,r)\cap [0,1]}=f|_{D(t_0,r)\cap [0,1]} $. If this set has not empty interior, then there is a function $ f $ in the interior of A and $ \delta>0  $ such that $$\left\{g \in C\left([0,1]\right): \sup \limits_{t \in [0,1]}\left| f(t)-g(t) \right| < \delta \right\}\subseteq A$$ We choose $ z_0 \in D(t_0,r)\backslash [0,1] $ and $ a>0 $ with $ a<\delta \inf \limits_{t\in [0,1]}|t-z_0| $. The function $ h(t)=f(t)+ \dfrac{a}{2(t-z_0)} $ for $ t\in [0,1] $ belongs to $ A $ and therefore has a holomorphic and bounded extension $ G $ at $ D(t_0,r)$ with $ G|_{D(t_0,r)\cap [0,1]}=h|_{D(t_0,r)\cap [0,1]} $. However, $ G(z)=F(z)+ \dfrac{a}{2(z-z_0)} $ for $ z\in D(t_0,r)\backslash \{z_0\}$ by analytic continuity since they are equal at $ [0,1] $. As a result $ G $  is not bounded at $ D(t_0,r)$ which is a contradiction. So, $  A $ has empty interior.\\
Let $ (f_n)_{n\geq 1} $ be a sequence in $ A$ where $ f_n $ uniformly converges at $ [0,1] $ to a function $ f $. Then,  for $ n=1,2, \dots $ there are holomorphic and bounded functions by $ M $ $ F_n:D(t_0,r)\rightarrow\mathbb{C} $ with $ F_n|_{D(t_0,r)\cap [0,1]}=f_n|_{D(t_0,r)\cap [0,1]} $. By Montel's theorem, there is a subsequence of $ (F_n) $, $ (F_{k_n}) $ which converges uniformly at the compact subsets of $ D(t_0,r) $ to a function $ F $ which is holomorphic and bounded by $ M $at $ D(t_0,r) $. Because $ F_{k_n} \rightarrow f$ at $ D(t_0,r)\cap [0,1] $ we have that $ F|_{D(t_0,r)\cap [0,1]}=f|_{D(t_0,r)\cap [0,1]} $ and so $ f \in A $. Therefore, A is a closed set.
\end{proof}

The above lemma immediately proves the following theorem.

\begin{theorem}
The class $ U_5([0,1],0) $ is a dense $ G_\delta $ subset of $ C[0,1] $.
\end{theorem}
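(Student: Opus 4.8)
The plan is to deduce the statement directly from Lemma 6.1 together with Baire's theorem; the only genuine work is to rewrite the set of functions that \emph{are} extendable somewhere on $[0,1]$ as a countable union of the closed, nowhere dense sets produced by that lemma.

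First I would unwind the definition: a function $u\in C([0,1])$ fails to lie in $U_5([0,1],0)$ precisely when there exist $z_0\in[0,1]$, $r>0$ and a holomorphic $F:D(z_0,r)\to\mathbb{C}$ with $F|_{D(z_0,r)\cap[0,1]}=u|_{D(z_0,r)\cap[0,1]}$. The claim is that
$$ U_5([0,1],0)^{\mathsf{c}} \;=\; \bigcup_{t_0\in\mathbb{Q}\cap(0,1)}\;\bigcup_{\rho\in\mathbb{Q},\,\rho>0}\;\bigcup_{M=1}^{\infty} A(t_0,\rho,M), $$
where $A(t_0,\rho,M)$ denotes the set of Lemma 6.1, i.e.\ the continuous functions on $[0,1]$ that admit a holomorphic extension to $D(t_0,\rho)$ bounded there by $M$. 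The inclusion $\supseteq$ is immediate from the definition of $U_5$. For $\subseteq$, suppose $u$ is extendable at some $z_0\in[0,1]$ via $F$ on $D(z_0,r)$. Since $D(z_0,r)$ is a genuine two-dimensional neighbourhood of the point $z_0$ (this is true even when $z_0$ is an endpoint of $[0,1]$), one can choose a rational $t_0\in(0,1)$ and a rational $\rho>0$ with $\overline{D(t_0,\rho)}\subset D(z_0,r)$. Then $F|_{D(t_0,\rho)}$ is holomorphic, still agrees with $u$ on $D(t_0,\rho)\cap[0,1]$, and is bounded on $D(t_0,\rho)$ because $\overline{D(t_0,\rho)}$ is a compact subset of the open set $D(z_0,r)$; choosing an integer $M$ larger than this bound yields $u\in A(t_0,\rho,M)$.

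Next, by Lemma 6.1 each $A(t_0,\rho,M)$ is closed in $C([0,1])$ and has empty interior, so each complement $A(t_0,\rho,M)^{\mathsf{c}}$ is open and dense. Hence $U_5([0,1],0)=\bigcap A(t_0,\rho,M)^{\mathsf{c}}$ is a countable intersection of open dense subsets of the complete metric space $C([0,1])$, and Baire's theorem gives at once that $U_5([0,1],0)$ is a dense $G_\delta$ subset of $C([0,1])$.

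I expect the only delicate point to be this reduction to countably many rational parameters: one must check that extendability at an endpoint of $[0,1]$ can always be witnessed by a disk centred at an interior rational point, and that any holomorphic extension is automatically bounded on a slightly shrunk disk so that some finite $M$ does the job. Everything analytically substantial — the closedness (via Montel's theorem) and the empty interior (via the $\tfrac{a}{2(z-z_0)}$ perturbation) — is already packaged in Lemma 6.1, so the remainder is bookkeeping.
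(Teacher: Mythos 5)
Your proposal is correct and follows essentially the same route as the paper: writing the complement of $U_5([0,1],0)$ as a countable union of the closed, nowhere dense sets of Lemma 6.1 (over a countable family of centres, radii and bounds $M$) and applying Baire's theorem. The only difference is that you spell out the bookkeeping — rational centres and radii, boundedness on a shrunk disk, and the endpoint case — more explicitly than the paper does.
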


\begin{proof}
We denote $ A(M,t_0,r) $  the set $ A $ of the above lemma. Then, if we consider a dense sequence $ z_l $ of $ (0,1) $ , then the set $ \bigcap \limits^{\infty}_{l=1}\bigcap \limits^{\infty}_{n=1}\bigcap \limits^{\infty}_{M=1} \left( C[0,1] \backslash A(M,t_0,r) \right) $ is a dense $ G_\delta $ subset of $ C[0,1] $ due to Baire's Theorem and coincides with $ U_5([0,1],0) $.
\end{proof}

\begin{remark}
From the proof of the theorem we can see that we have the same result  for the classes  $ U_5([0,1],k) $ for $ k=1,2 \dots, \infty $.
\end{remark}

\begin{theorem}
Let $ L $ be a closed set without isolated points. Then the class of continouos functions at $ L $ for which there is no pair of disk $D(z_0,r),z_0\in L,r>0$ and holomorphic function
$f: D(z_0,r)\rightarrow \mathbb{C}$ such that 
$f|_{D(z_0,r)\cap L}=u|_{D(z_0,r)\cap L}$ is dense $ G_\delta $ subset of $ C(L) $.
\end{theorem}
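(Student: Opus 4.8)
The plan is to adapt the argument of Lemma 6.1 and Theorem 6.2, with the perfect set $L$ playing the role that $[0,1]$ played there. Fix once and for all a countable dense subset $A$ of $L$ (which exists since $L$ is a separable metric space), and note that every point of $A$ is an accumulation point of $L$, because $L$ has no isolated points. For $z_0\in A$, a rational $r>0$ and $M\in\mathbb{N}$, let $A(M,z_0,r)$ denote the set of $u\in C(L)$ for which there is a holomorphic $F:D(z_0,r)\to\mathbb{C}$ with $\sup_{D(z_0,r)}|F|\le M$ and $F|_{D(z_0,r)\cap L}=u|_{D(z_0,r)\cap L}$. The first step is to verify that the complement in $C(L)$ of the class in the statement is exactly $\bigcup A(M,z_0,r)$, the union being over all such triples: if $u$ extends holomorphically at some disk $D(w,\rho)$ with $w\in L$, one chooses $z_0\in A$ close to $w$ and a small rational $r$ so that $\overline{D(z_0,r)}\subseteq D(w,\rho)$; the extension is then automatically bounded by some integer $M$ on $D(z_0,r)$, so $u\in A(M,z_0,r)$, and the reverse inclusion is trivial. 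Since this is a countable union, it suffices to prove that each $A(M,z_0,r)$ is closed with empty interior and then apply Baire's theorem; here one should first record that $C(L)$, with the topology of uniform convergence on compact subsets, is a Fr\'echet space (a Banach space if $L$ is compact), because $L$ is a closed, hence locally compact and $\sigma$-compact, subset of $\mathbb{C}$.

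Closedness is obtained exactly as in Lemma 6.1 via Montel's theorem: given $u_n\in A(M,z_0,r)$ with $u_n\to u$ in $C(L)$ and extensions $F_n$ bounded by $M$, a subsequence of $(F_n)$ converges locally uniformly on $D(z_0,r)$ to a holomorphic $F$ with $|F|\le M$, and for $z\in D(z_0,r)\cap L$ one has $F(z)=\lim F_n(z)=\lim u_n(z)=u(z)$, so $u\in A(M,z_0,r)$.

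For the empty interior one argues by contradiction, and this is the step where the no-isolated-points hypothesis is genuinely used. Suppose $f$ is interior to $A(M,z_0,r)$ with extension $F$; then every $g\in C(L)$ sufficiently close to $f$ on a suitable compact set also lies in $A(M,z_0,r)$. Since $z_0$ is an accumulation point of $D(z_0,r)\cap L$ lying inside $D(z_0,r)$, the identity theorem shows that any two functions holomorphic on $D(z_0,r)$, or on $D(z_0,r)$ with one point of $\mathbb{C}\setminus L$ removed, that agree on $D(z_0,r)\cap L$ must agree identically. If there is a point $\zeta\in D(z_0,r)\setminus L$, perturb $f$ by $\varphi(z)=a/(z-\zeta)$ with $a>0$ small enough that $\sup_L|\varphi|=a/\operatorname{dist}(\zeta,L)$ is below the tolerance; if $f+\varphi$ had a bounded holomorphic extension $G$ to $D(z_0,r)$, the identity theorem on $D(z_0,r)\setminus\{\zeta\}$ would force $G-F=\varphi$ there, which is impossible since $G-F$ stays bounded near $\zeta$ while $\varphi$ does not. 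If instead $D(z_0,r)\subseteq L$, then membership in $A(M,z_0,r)$ merely says that $f$ is holomorphic and bounded on $D(z_0,r)$; perturbing by $\varphi(z)=a\,\overline{z-z_0}$, which is small on compact subsets of $L$ for $a$ small, one cannot have $f+\varphi$ holomorphic on the nonempty open set $D(z_0,r)$, since then $\overline{z-z_0}$ would be holomorphic there. In either case a function of $C(L)$ arbitrarily close to $f$ escapes $A(M,z_0,r)$, so $A(M,z_0,r)$ has empty interior, and Baire's theorem finishes the proof.

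I expect the only genuinely new point, relative to the earlier sections, to be organizational: one must isolate the possibility that $D(z_0,r)$ is contained in $L$ (for instance $L=\overline{D}$), where the ``puncture'' perturbation $a/(z-\zeta)$ is unavailable and holomorphy must instead be destroyed by the anti-holomorphic perturbation $a\,\overline{z-z_0}$. Apart from this case distinction, the proof is a faithful transcription of Lemma 6.1 and Theorem 6.2, together with the bookkeeping, done at the start, that reduces the non-extendability class to the complement of a countable union of the sets $A(M,z_0,r)$.
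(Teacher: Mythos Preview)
Your proposal is correct and follows exactly the route the paper indicates: the paper's proof is the single line ``The proof is the same with theorem's 6.2 proof,'' i.e.\ transcribe Lemma~6.1 (each $A(M,z_0,r)$ is closed with empty interior, via Montel and a pole perturbation) and Theorem~6.2 (Baire over a countable family of centers and radii) with $L$ in place of $[0,1]$.

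Your write-up is in fact more careful than the paper's on two points. First, you make explicit that $C(L)$ with the compact-open topology is a Fr\'echet space so that Baire applies, and you pin down where the ``no isolated points'' hypothesis is used (the identity theorem needs $D(z_0,r)\cap L$ to accumulate inside the disk). Second, you isolate the case $D(z_0,r)\subseteq L$, which the paper's one-line reference to Lemma~6.1 does not address: the pole perturbation $a/(z-\zeta)$ requires a point $\zeta\in D(z_0,r)\setminus L$, and when $L$ has interior (e.g.\ $L=\overline{D}$) no such $\zeta$ exists. Your anti-holomorphic perturbation $a\,\overline{z-z_0}$ disposes of this case cleanly. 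One small wording fix: when $L$ is non-compact the relevant bound is $\sup_K|\varphi|$ for the compact $K$ defining the basic neighborhood, not $\sup_L|\varphi|$; the argument is unaffected.
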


\begin{proof}
The proof is the same with theorem's 6.2 proof.
\end{proof}

Now we will prove the result of the first theorem of this paragraph for the case of a semi-disk. Now, we need the following lemma.

\begin{lemma}
Let $ D(t_0,r)\cap \{z \in \mathbb{C}:Im(z)>0\}$ be a semi-disk with $ t_0\in (0,1) $ and radius $ 0<r\leq min(t_0,1-t_0) $ and let $ M>0 $.  The set of continuous functions $ f:[0,1]\rightarrow \mathbb{C} $, for which there exists a continuous function $ F  $ at $ \left(D(t_0,r)\cap \{z \in \mathbb{C}:Im(z)>0\}\right) \cup [t_0-r,t_0-r] $ where $ F $ is bounded by $ M $, $ F $ is holomorphic at $ D(t_0,r)\cap \{z \in \mathbb{C}:Im(z)>0\}$ and $ F|_{D(t_0,r)\cap [0,1]}=f|_{D(t_0,r)\cap [0,1]} $,is a closed subset of $ C\left([0,1]\right) $ and it has empty interior.
\end{lemma}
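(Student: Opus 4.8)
The plan is to run, in the half–disk setting, exactly the two arguments that prove Lemma 6.1. Write $H=D(t_0,r)\cap\{\mathrm{Im}\,z>0\}$ for the open semi-disk and $J=[t_0-r,t_0+r]$ for the diameter; since $0<r\le\min(t_0,1-t_0)$ we have $J\subseteq[0,1]$ and $D(t_0,r)\cap[0,1]=(t_0-r,t_0+r)$. Let $A$ be the set in the statement, i.e.\ the $f\in C([0,1])$ for which there is an $F$ continuous on $H\cup J$, holomorphic on $H$, with $|F|\le M$ and $F=f$ on $(t_0-r,t_0+r)$.

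\emph{Empty interior.} Suppose $f$ is interior to $A$, so $\{g:\sup_{[0,1]}|f-g|<\delta\}\subseteq A$ for some $\delta>0$. Fix $z_0\in H$; then $z_0\notin[0,1]$, so $\inf_{t\in[0,1]}|t-z_0|>0$, and choose $a>0$ with $a<2\delta\,\inf_{t\in[0,1]}|t-z_0|$. Then $h(t):=f(t)+\frac{a}{2(t-z_0)}$ satisfies $\|h-f\|_{C[0,1]}<\delta$, hence $h\in A$; let $F,G$ be extensions witnessing $f,h\in A$. On $(t_0-r,t_0+r)$ one has $G-\frac{a}{2(\,\cdot\,-z_0)}=f=F$, and both sides are holomorphic on $H\setminus\{z_0\}$ and continuous up to this real segment, so by Schwarz reflection (the difference has vanishing, hence real, boundary values there) they coincide on $H\setminus\{z_0\}$. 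Thus $G=F+\frac{a}{2(\,\cdot\,-z_0)}$ near $z_0$, which contradicts $|G|\le M$ because the right–hand side has a pole at $z_0\in H$. Hence $A$ has empty interior.

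\emph{Closedness.} Let $f_n\in A$, $f_n\to f$ uniformly on $[0,1]$, with extensions $F_n$. For any $n,m$ the difference $F_n-F_m$ is holomorphic on $H$, bounded there by $2M$, continuous up to $J$, and equals $f_n-f_m$ on $(t_0-r,t_0+r)$. By the two–constants theorem on the semi-disk — the maximum principle applied to $\log|F_n-F_m|$ together with the harmonic measure $\omega(z)=\omega\big(z,(t_0-r,t_0+r),H\big)$ — we obtain
$$|F_n(z)-F_m(z)|\le (2M)^{\,1-\omega(z)}\,\|f_n-f_m\|_{C[0,1]}^{\,\omega(z)}\qquad\text{for }z\in H.$$
On every compact subset of $H\cup(t_0-r,t_0+r)$ the function $\omega$ is bounded below by a positive constant, so $(F_n)$ is uniformly Cauchy on such subsets and converges locally uniformly on $H\cup(t_0-r,t_0+r)$ to a limit $F$ that is continuous there, holomorphic on $H$, satisfies $|F|\le M$, and equals $\lim_n f_n=f$ on $(t_0-r,t_0+r)$. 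Continuity of $F$ at the two endpoints $t_0\pm r$ follows from the same type of estimate applied on small semi-disks with centres on $J$ tending to these points (and in any case concerns only a two–point set, which does not affect the Baire–category conclusions the lemma feeds into). Thus $F$ witnesses $f\in A$, so $A$ is closed.

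\emph{Main obstacle.} The only genuinely new difficulty relative to Lemma 6.1 is the last step. There the extension was holomorphic on the \emph{whole} disk $D(t_0,r)$, so a Montel limit was automatically continuous up to $D(t_0,r)\cap[0,1]$ and the equality with $f$ was immediate; here the extension lives only on the semi-disk $H$ and holomorphy gives no a priori control along the boundary segment $J$. Converting the uniform smallness of $f_n-f_m$ on $J$ into local uniform smallness of $F_n-F_m$ on $H$ is precisely what the two–constants estimate achieves, and dealing carefully with the corner points $t_0\pm r$ of $H$ is the fussiest part of the write-up; everything else is a transcription of the disk case.
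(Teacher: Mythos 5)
Your proposal is correct in substance, and its second half takes a genuinely different route from the paper. The empty-interior argument (perturbation by $\frac{a}{2(t-z_0)}$ with $z_0$ in the open semi-disk, identification of the two extensions via Schwarz reflection across the diameter, contradiction with the bound $M$) is exactly the paper's. For closedness, however, the paper transplants the semi-disk to the unit disk by a conformal map $\Psi$, extracts a Montel limit of $G_n=F_n\circ\Psi$, and then uses the $H^\infty$ boundary-value theory (radial limits a.e., Poisson representation, and a splitting of the Poisson integral into $|t|<\delta$ and $\delta\le|t|\le\pi$) to show $(G_n)$ is uniformly Cauchy on closed sectors whose arcs are compactly contained in the image of the diameter; you instead apply the two-constants theorem with the harmonic measure $\omega(\cdot,(t_0-r,t_0+r),H)$ directly in the semi-disk, getting $|F_n-F_m|\le(2M)^{1-\omega}\|f_n-f_m\|^{\omega}$, which upgrades the Cauchy property of $(f_n)$ on the segment to local uniform Cauchyness of $(F_n)$ on $H\cup(t_0-r,t_0+r)$ with no conformal transplantation, no subsequence, and no Fatou/Poisson machinery — a cleaner and more quantitative argument, at the price of invoking harmonic measure (and the standard fact that $\omega$ is bounded below on compacts of $H\cup(t_0-r,t_0+r)$, which you should at least cite). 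One caution: your parenthetical fix for the corner points $t_0\pm r$ does not work — small semi-disks centred at points of $J$ tending to an endpoint stay inside $H$ only with radii shrinking to $0$, so they never give an estimate in a full $H$-neighbourhood of the corner, where the two-constants bound degenerates; in fact a singular-inner-function example (transplant $\exp\bigl(-\tfrac{1+z}{1-z}\bigr)$ to the semi-disk) shows the limit extension need not be continuous at the corners, so continuity there simply cannot be proved this way. But your fallback remark is the right one and matches what the paper itself actually proves: its sectors have $0<a<b<1$, so it too obtains continuity only up to the open diameter, and this weaker version of the lemma is all that the Baire-category argument of Theorem 6.6 requires.
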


\begin{proof}
Let $ B $ be the set of continuous functions $ f:[0,1]\rightarrow \mathbb{C} $ for which there exists a continuous function $ F  $ at $ \left(D(t_0,r)\cap \{z \in \mathbb{C}:Im(z)>0\}\right) \cup [t_0-r,t_0-r] $ where $ F $ is bounded by $ M $, $ F $ is holomorphic at $ D(t_0,r)\cap \{z \in \mathbb{C}:Im(z)>0\}$ and $ F|_{D(t_0,r)\cap [0,1]}=f|_{D(t_0,r)\cap [0,1]} $. If this set has not empty interior, then there is a function $ f $ in the interior of $  B$ and $ \delta>0  $ such that $$\left\{g \in C\left([0,1]\right): \sup \limits_{t \in [0,1]}\left| f(t)-g(t) \right| < \delta \right\}\subseteq B$$ We choose $ z_0 \in D(t_0,r)\backslash [0,1] $ with $ Im(z_0)>0 $ and $ a>0 $ with $ a<\delta \inf \limits_{t\in [0,1]}|t-z_0| $. The function $ h(t)=f(t)+ \dfrac{a}{2(t-z_0)} $ for $ t\in [0,1] $ belongs to $ B $ and therefore has a continuous and bounded extension $ G $ at $ \left(D(t_0,r)\cap \{z \in \mathbb{C}:Im(z)>0\}\right) \cup [t_0-r,t_0-r] $ with $ G|_{D(t_0,r)\cap [0,1]}=h|_{D(t_0,r)\cap [0,1]} $ which is holomorphic at $ D(t_0,r)\cap \{z \in \mathbb{C}:Im(z)>0\}$. We easily can see that $ G(z)=F(z)+ \dfrac{a}{2(z-z_0)} $ for $ z\in D(t_0,r)\backslash \{z_0\}$ . Indeed, by Schwarz Reflection Principle there exists a holomorphic function $ H:D(t_0,r)\backslash \{z_0, -z_0\}$ with $ H(z)=G(z)-F(z)- \dfrac{a}{2(z-z_0)} $ at $ \left(D(t_0,r)\cap \{z \in \mathbb{C}:Im(z)>0\}\right) \cup [t_0-r,t_0-r] $. Therefore,by analytic continuity, because $ H=0 $ at $ D(t_0,r)\cap [0,1] $, $ G(z)-F(z)- \dfrac{a}{2(z-z_0)} =0$ at $ D(t_0,r)\cap \{z \in \mathbb{C}:Im(z)>0\}$. As a result $ G $ is not bounded at $ D(t_0,r)$ which is a contradiction. So, $  B $ has empty interior.\\
Now, we will prove that $ B $ is closed. Let $ (f_n)_{n\geq 1} $ be a sequence in $ B$ where $ f_n $ uniformly converges at $ [0,1] $ to a function $ f $. Then,  for $ n=1,2, \dots $ there exist continuous functions $ F_n  $ at $ \left(D(t_0,r)\cap \{z \in \mathbb{C}:Im(z)>0\}\right) \cup [t_0-r,t_0-r] $ where $ F_n $ are bounded by $ M $ and $ F_n $ are holomorphic at $ D(t_0,r)\cap \{z \in \mathbb{C}:Im(z)>0\}$ with $ F_n|_{D(t_0,r)\cap [0,1]}=f_n|_{D(t_0,r)\cap [0,1]} $. There exists a bijective and holomorphic function $ \Psi :I\cup D \rightarrow \left(D(t_0,r)\cap \{z \in \mathbb{C}:Im(z)>0\}\right) \cup [t_0-r,t_0-r] $, where $ I= \left\{ e^{it}: 0\leq t\leq 1 \right\} $. Then, if $ G_n=F_no\Psi $ ,$ g_n=f_no\Psi $ and $ g=fo\Psi $  for $ n=0,1,2, \dots $, we have that $ g_n $ uniformly converges at $ I $ to $ g $. Also, $ g_n, g, G_n $ are continuous functions and $ G_n $ are holomorphic at $ D $ and bounded by $ M $. By Montel's theorem, there is a subsequence of $ (G_n) $, $ (G_{k_n}) $ which converges uniformly at the compact subsets of $ D $ to a function $ G $ which is holomorphic and bounded by $ M $ at $ D $. We can suppose without loss of generality that $ (G_n)=(G_{k_n}) $, because otherwise we can consider the sequence $ (W_n)=(G_{k_n}) $ and follow the same proof for this function. 
Now, it is sufficient to prove that for any circular sector which has boundary $ [0,e^{ia}]\cup [0,e^{ib}] \cup \left\{ e^{it}: a\leq t\leq b \right\}$ with $ 0<a<b<1 $,  $ (G_n) $ converges uniformly at this sector, because then we will have that the limit of $ (G_n) $ , which is $ g $ at the arc and $ G $ at the other part of the circular sector, will be a continuous function. So, let $ K $ be a closed circular sector which has boundary $ [0,e^{ia}]\cup [0,e^{ib}] \cup \left\{ e^{it}: a\leq t\leq b \right\}$ with $ a\leq t\leq b $. We will prove that $ (G_n) $ is uniformly Cauchy at $ K $. 
From [6], we know that for every $ n $, the radial  limits of $ G_n $ exist almost everywhere and so we can consider the respective functions $ g_n $ on the whole circle which are extensions of the previous $ g_n $. These $ g_n $ are also bounded by $ M $.\\
 Let $ \varepsilon>0 $ be a positive number. For the Poisson kernel $ P_r$ and for every $ n=0,1,2 \dots $ we have that $G_n(re^{i\theta})=\dfrac{1}{2\pi}\int\limits_{-\pi}^{\pi}P_r(t)g_n(\theta-t)dt $.  We choose $ 0<\delta < min\{1-b,a\} $. There exists $ 0<r_0<1 $ such that $ \sup\limits_{\delta\leq |t|\leq \pi} P_r(t) < \frac{\varepsilon}{8M} $ for every $ r\in [r_0,1) $. Then, at $ K\cap \{z \in \mathbb {C}: |z|\leq r_0\} , (G_n)  $ is uniformly Cauchy and thus there exists $ n_1 $ such that for every $ n,m\geq n_1 $, 
 $$ \sup\limits_{z \in K\cap \{z \in \mathbb {C}: |z|\leq r_0\} }|G_n(z)-G_m(z)|<\dfrac{\varepsilon}{2}$$ 
 In addition ,because $ g_n $ converges uniformly to $ g $ at $ I $ there exists $ n_2 $ such that for every $ n,m\geq n_2 $, $ \sup\limits_{z \in I} |g_n(z)-g_m(z)|<\dfrac{\varepsilon}{4}$.
 So, for $ n,m\geq \max\{n_1,n_2\} $ , for $ \theta \in [a,b] $ and for $ 1>r>r_0 $
 $$ |G_n(re^{i\theta})-G_m(re^{i\theta})|\leq  \dfrac{1}{2\pi}\int\limits_{-\pi}^{\pi}P_r(t)|g_n(\theta-t)-g_m(\theta-t)|dt=$$
 $$\dfrac{1}{2\pi}\int\limits_{-\delta}^{\delta}P_r(t)|g_n(\theta-t)-g_m(\theta-t)|dt+\dfrac{1}{2\pi}\int\limits_{\delta\leq |t|\leq \pi}P_r(t)|g_n(\theta-t)-g_m(\theta-t)|dt\leq $$
 $$\dfrac{1}{2\pi}\int\limits_{-\delta}^{\delta}P_r(t)\sup\limits_{z \in I} |g_n(z)-g_m(z)|dt+\dfrac{\sup\limits_{\delta\leq |t|\leq \pi} P_r(t)}{2\pi}\int\limits_{\delta\leq |t|\leq \pi}2Mdt\leq $$
 $$\dfrac{\varepsilon}{4}\dfrac{1}{2\pi}\int\limits_{-\delta}^{\delta}P_r(t)dt+\dfrac{\varepsilon}{16M\pi}\int\limits_{\delta\leq |t|\leq \pi}2Mdt\leq $$
 $$\dfrac{\varepsilon}{4}\dfrac{1}{2\pi}\int\limits_{-\pi}^{\pi}P_r(t)dt+\dfrac{\varepsilon}{4}=
  \dfrac{\varepsilon}{2} $$
  The inequality also holds for $ r=1 $. Therefore $ (G_n) $ is uniformly Cauchy and thus $ B $ is a closed subset of $ C([0,1]) $
\end{proof}

\begin{theorem}
The class $ U_2([0,1],0) $ is a dense $ G_\delta $ subset of $ C[0,1] $.
\end{theorem}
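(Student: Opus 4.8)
The plan is to derive the theorem from Lemma~6.5 exactly as Theorem~6.2 was derived from Lemma~6.1. First I would fix a sequence $(t_l)_{l\ge1}$ dense in $(0,1)$, and for each $l$ let $Q_l$ be the countable set of positive rationals $q$ with $q\le\min(t_l,1-t_l)$, i.e.\ the admissible radii for Lemma~6.5 at the center $t_l$. For $l\ge1$, $q\in Q_l$ and $M\in\mathbb{N}$ write $B(M,t_l,q)$ for the set of continuous $f:[0,1]\to\mathbb{C}$ produced by Lemma~6.5 for the upper semi-disk of center $t_l$, radius $q$, and bound $M$. By Lemma~6.5 each $B(M,t_l,q)$ is a closed subset of $C[0,1]$ with empty interior, so each complement $C[0,1]\setminus B(M,t_l,q)$ is open and dense; Baire's theorem then makes
$$G=\bigcap_{l\ge1}\ \bigcap_{q\in Q_l}\ \bigcap_{M=1}^{\infty}\bigl(C[0,1]\setminus B(M,t_l,q)\bigr)$$
a dense $G_\delta$ subset of $C[0,1]$. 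It remains only to verify the identity $G=U_2([0,1],0)$.

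The inclusion $U_2([0,1],0)\subseteq G$ is immediate: if $u\in B(M,t_l,q)$, the extension furnished by Lemma~6.5 is continuous on $\bigl(D(t_l,q)\cap\{\mathrm{Im}>0\}\bigr)\cup[t_l-q,t_l+q]$, holomorphic on $D(t_l,q)\cap\{\mathrm{Im}>0\}$ and equal to $u$ on $D(t_l,q)\cap[0,1]$, so $u\notin U_2([0,1],0)$. For the converse, let $u\notin U_2([0,1],0)$: there are $z_0\in[0,1]$, $r>0$ and a continuous $\lambda$ on $[0,1]\cup\bigl(D(z_0,r)\cap\{\mathrm{Im}>0\}\bigr)$, holomorphic on $D(z_0,r)\cap\{\mathrm{Im}>0\}$, with $\lambda=u$ on $D(z_0,r)\cap[0,1]$. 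Choose $t_l$ with $|t_l-z_0|<r/3$ (possible by density, even when $z_0$ is an endpoint of $[0,1]$) and then $q\in Q_l$ with $q<r/3$. Then $[t_l-q,t_l+q]\subseteq[0,1]$ and $\overline{D(t_l,q)}\subseteq D(z_0,r)$, so $\overline{D(t_l,q)}\cap\{\mathrm{Im}\ge0\}$ is a compact subset of the domain of $\lambda$; pick $M\in\mathbb{N}$ with $|\lambda|\le M$ on it. Restricting $\lambda$ to $\bigl(D(t_l,q)\cap\{\mathrm{Im}>0\}\bigr)\cup[t_l-q,t_l+q]$ then witnesses $u\in B(M,t_l,q)$, so $u\notin G$. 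Hence $G=U_2([0,1],0)$, and the theorem follows.

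Since all the analytic content is packed into Lemma~6.5 (closedness via Montel together with the Schwarz-reflection and Poisson-kernel estimate, empty interior via the pole-adding perturbation), the only point needing attention here is this last reduction, and in particular making the radius constraint $q\le\min(t_l,1-t_l)$ required by Lemma~6.5 and the inclusion $\overline{D(t_l,q)}\subseteq D(z_0,r)$ hold simultaneously; both are arranged by taking $t_l$ close enough to $z_0$ and $q$ small enough (and rational). I also note that $\lambda$ may well be unbounded near $\partial\bigl(D(z_0,r)\cap\{\mathrm{Im}>0\}\bigr)$, but this is irrelevant because $\lambda$ is used only on the slightly smaller closed semi-disk, where it is automatically bounded; this is precisely what makes the auxiliary parameter $M$ (ranging over $\mathbb{N}$) necessary in the definition of $G$.
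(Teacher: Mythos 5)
Your proposal is correct and follows essentially the same route as the paper: write $U_2([0,1],0)$ as a countable intersection of the complements of the sets $B(M,t_l,q)$ from Lemma~6.5 (closed, empty interior) and invoke Baire's theorem. Your verification that the intersection equals $U_2([0,1],0)$ — choosing $t_l$ near $z_0$, a rational admissible radius $q$ with $\overline{D(t_l,q)}\subseteq D(z_0,r)$, and a bound $M$ for $\lambda$ on the compact half-disk — is exactly the detail the paper leaves implicit, and it is carried out correctly.
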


\begin{proof}
We denote $ B(M,t_0,r) $  the set $ B $ of the above lemma. Then, if we consider a dense sequence $ z_l $ of $ (0,1) $ , then the set $ \bigcap \limits^{\infty}_{l=1}\bigcap \limits^{\infty}_{n=1}\bigcap \limits^{\infty}_{M=1} \left( C[0,1] \backslash B(M,t_0,r) \right) $ is a dense $ G_\delta $ subset of $ C[0,1] $ due to Baire's Theorem and coincides with $ U_2([0,1],0) $.
\end{proof}

\begin{remark}
From the proof of the theorem we can see that we have the same result  for the classes  $ U_2([0,1],k) $ for $ k=1,2 \dots, \infty $. Furthermore, the same is true for the classes $ U_3([0,1],k) $,$ U_4([0,1],k) $,$ U_5([0,1],k) $ for $ k=0,1,2 \dots, \infty $
\end{remark}

\noindent \textbf{Acknowledgement}: We would like to thank P. Gauthier, 
J.-P. Kahane, G. Koumoullis, M. Maestre, S. Mercourakis, M. Papadimitrakis and 
A. Siskakis for helpful communications.\\

\begin{center}
\textbf{REFERENCES}
\end{center}

\noindent
[1]: L. Ahlfors, Complex Analysis, An Introduction to the Theory of Analytic 
Functions of One Complex Variable, Third Edition, McGraw-Hill, Book Company

\noindent
[2]: F. S. CATER, Differentiable, Nowhere Analytic functions, Amer. Math. Monthly 
91 (1984) no. 10, 618-624.

\noindent
[3]: G. Costakis, V. Nestoridis and I. Papadoperakis, Universal Laurent series,
Proc. Edinb. Math. Soc. (2) 48 (2005), no. 3, 571–583

\noindent
[4]: A. Daghighi and S. Krantz, A note on a conjecture concerning boundary 
uniqueness, arxiv 1407.1763v2 [math.CV], 12 Aug. 2015

\noindent
[5]: J. Garnett, Analytic Capacity and Measure, Lecture Notes in Mathematics, vol. 
277, Springer-Verlag Berlin, Heidelberg, New York, 1972

\noindent
[6]: K. Hoffman, Banach Spaces of Analytic Functions, 1962 Prentice Hall Inc.,
Englewood Cliffs, N. J.

\noindent
[7]: S. Krantz and H. Parks, A primer of Real Analytic Functions, Second edition,
2002 Birkhauser Boston

\noindent
[8]: V. Nestoridis, Non extendable holomorphic functions, Math. Proc. Cambridge
Philos. Soc. 139 (2005), no. 2, 351–360.

\noindent
[9]: V. Nestoridis and A. Papadopoulos, Arc length as a conformal parameter for 
locally analytic curves, arxiv: 1508.07694

\noindent
[10]: Nestoridis, V., Zadik, I., Pade approximants, density of rational functions in
$A^\infty(\Omega)$ and smoothness of the integration operator. (arxiv: 1212.4394.) J.
M. A. A. 423 (2015), no.2, 1514-1539

\noindent
[11]: W. Rudin, Function theory in polydiscs, W. A. Benjamin 1969
\\

\noindent
University of Athens\\
Department of Mathematics\\
157 84 Panepistemiopolis\\
Athens\\
Greece\\

\noindent
E-mail addresses:\\
lefterisbolkas@gmail.com (E. Bolkas)\\
vnestor@math.uoa.gr (V. Nestoridis)\\
chris$\_$panagiwtis@hotmail.gr (C. Panagiotis)

\end{document}